\newcommand{\schff}{X}
\newcommand{\adicff}{\mathcal{X}}
\newcommand{\Spa}{\mathrm{Spa}}
\newcommand{\Spd}{\mathrm{Spd}}
\newcommand{\HN}{\mathrm{HN}}
\newcommand{\HNvec}{\overrightarrow{\mathrm{HN}}}
\newcommand{\Surj}{\mathcal{S}\mathrm{urj}}
\newcommand{\Inj}{\mathcal{I}\mathrm{nj}}
\newcommand{\Hom}{\mathcal{H}\mathrm{om}}
\newcommand{\rk}{\mathrm{rk}}
\newcommand{\rkdegsum}{\chi}
\newcommand{\inj}{\hookrightarrow}
\newcommand{\surj}{\twoheadrightarrow}
\newcommand{\nonneg}{{\geq 0}}
\newcommand{\nonpos}{{\leq 0}}
\newcommand{\mumax}{\mu_\text{max}}
\newcommand{\mumin}{\mu_\text{min}}
\newcommand{\trivbundle}{\Ocal}
\newcommand{\undescent}{\mathring}
\newcommand{\vertstretch}{\tilde}
\newcommand{\rankred}{\breve}
\newcommand{\maxslopered}{\overline}
\newcommand{\genslopered}{\tilde}
\newcommand{\commonslopered}{\acute}
\newcommand{\uniformizer}{\pi}
\newcommand{\pseudounif}{\varpi}
\newcommand{\finext}{E}
\newcommand{\algclosedperfdfield}{F}
\newcommand{\genperfdring}{R}
\newcommand{\integerring}[1]{{#1}^\circ}
\newcommand{\witt}{W}
\newcommand{\Perfd}{\mathrm{Perf}}
\newcommand{\Sch}{\mathrm{Sch}}
\numberwithin{equation}{section}
\newcommand{\F}{\mathbb{F}}
\newcommand{\Q}{\mathbb{Q}}
\newcommand{\Z}{\mathbb{Z}}
\newcommand{\R}{\mathbb{R}}
\newcommand{\PP}{\mathbb{P}}
\newcommand{\cris}{\text{cris}}
\DeclareMathOperator{\rank}{rank}
\DeclareMathOperator{\Spec}{Spec\,}
\DeclareMathOperator{\Ext}{Ext}
\DeclareMathOperator{\Pic}{Pic}
\DeclareMathOperator{\Sym}{Sym}
\DeclareMathOperator{\Proj}{Proj\,}
\DeclareMathOperator{\dR}{dR}
\newcommand{\Cal}[1]{\mathcal{#1}}
\DeclareFontFamily{OT1}{rsfs}{}
\DeclareFontShape{OT1}{rsfs}{n}{it}{<-> rsfs10}{}
\DeclareMathAlphabet{\mathscr}{OT1}{rsfs}{n}{it}
\newcommand{\Dcal}{\mathcal{D}}
\newcommand{\Ecal}{\mathcal{E}}
\newcommand{\Fcal}{\mathcal{F}}
\newcommand{\Gcal}{\mathcal{G}}
\newcommand{\Hcal}{\mathcal{H}}
\newcommand{\Ocal}{\mathcal{O}}
\newcommand{\Qcal}{\mathcal{Q}}
\newcommand{\Tcal}{\mathcal{T}}
\newcommand{\Ucal}{\mathcal{U}}
\newcommand{\Vcal}{\mathcal{V}}
\newcommand{\Wcal}{\mathcal{W}}
\newcommand{\Ycal}{\mathcal{Y}}
\newcommand{\ecal}{\mathcal{e}}
\newcommand{\fcal}{\mathcal{f}}
\newcommand{\qcal}{\mathcal{q}}
\newcommand{\dsm}{\oplus}
\newtheorem{thm}{Theorem}[subsection]
\newtheorem{lemma}[subsubsection]{Lemma}
\newtheorem{prop}[subsubsection]{Proposition}
\newtheorem{cor}[subsubsection]{Corollary}
\newtheorem{conj}[subsubsection]{Conjecture}
\theoremstyle{remark}
\newtheorem*{remark}{Remark}
\newtheorem{defn}[subsubsection]{Definition}
\newtheorem{example}[subsubsection]{Example}
\newtheorem*{thm*}{Theorem}
\def\th@remark{%
  \thm@headfont{\bfseries}%
  \normalfont 
}
\def\imod#1{\allowbreak\mkern5mu({\operator@font mod}\,\,#1)}
\theoremstyle{theorem}
\newtheorem{theorem}[subsubsection]{Theorem}
\numberwithin{equation}{section}
\begin{document}
	
	\tikzset{
		node style sp/.style={draw,circle,minimum size=\myunit},
		node style ge/.style={circle,minimum size=\myunit},
		arrow style mul/.style={draw,sloped,midway,fill=white},
		arrow style plus/.style={midway,sloped,fill=white},
	}
    
	\title{Classification of quotient bundles on the Fargues-Fontaine curve}
   
    \author[S. Hong]{Serin Hong}
    \address{Department of Mathematics, University of Michigan, 530 Church Street, Ann Arbor MI 48109}
    \email{serinh@umich.edu}
    
    \begin{abstract} We completely classify all quotient bundles of a given vector bundle on the Fargues-Fontaine curve. As consequences, we have two additional classification results: a complete classification of all vector bundles that are generated by a fixed number of global sections, and a nearly complete classification of subsheaves of a given vector bundle. For the proof, we combine the dimension counting argument for moduli of bundle maps developed in \cite{Arizona_extvb} with a series of reduction arguments based on some reinterpretation of the classifying conditions. 

    \end{abstract}
	
	\maketitle

	\tableofcontents
	
	\rhead{}

	\chead{}
\section{Introduction}

In \cite{FF_curve}, Fargues and Fontaine constructed a remarkable scheme, now commonly referred to as the \emph{Fargues-Fontaine curve}, which serves as the ``fundamental curve'' for $p$-adic Hodge theory and the local Langlands program. In fact, many constructions in these fields have geometric interpretations in terms of vector bundles on the Fargues-Fontaine curve. Most notably, Fargues-Scholze \cite{FS_geomLL} builds upon the idea of Fargues \cite{Fargues_geomLL} to construct the local Langlands correspondence in terms of certain sheaves on the stack of vector bundles on the Fargues-Fontaine curve.

In this paper we obtain several classification results regarding vector bundles on the Fargues-Fontaine curve. Our main result is a complete classification of all quotient bundles of a given vector bundle. As a special case, we obtain a complete classification of all vector bundles that are generated by a fixed number of global sections. In addition, a dual statement of our main result gives a nearly complete classification of subsheaves of a given vector bundle.

\subsection{Statement of results}$ $

For a precise statement of our results, we briefly recall the classification of vector bundles on the Fargues-Fontaine curve. 

\begin{theorem}[Fargues-Fontaine {\cite[Th\'eor\`eme 8.2.10]{FF_curve}}, Kedlaya {\cite[Theorem 4.5.7]{Kedlaya_slopefiltrations_revisited}}] \label{classification of vector bundles on FF curve, intro} Fix a prime number $p$. Let $\finext$ be a finite extension of $\Q_p$, and let $\algclosedperfdfield$ be an algebraically closed perfectoid field of characteristic $p$. Denote by $\schff = \schff_{\finext, \algclosedperfdfield}$ the Fargues-Fontaine curve associated to the pair $(\finext, \algclosedperfdfield)$. 

\begin{enumerate}[label=(\arabic*)]
\item The scheme $\schff$ is complete in the sense that the divisor of an arbitrary nonzero rational function on $\schff$ has degree zero. As a consequence, there is a well-defined notion of the slope of a vector bundle on $\schff$. 
\smallskip

\item For every rational number $\lambda$, there is a unique stable bundle of slope $\lambda$ on $\schff$, denoted by $\trivbundle(\lambda)$. 
\smallskip

\item Every semistable bundle of slope $\lambda$ is of the form $\trivbundle(\lambda)^{\oplus m}$. 
\smallskip

\item\label{HN decomp of vector bundles, intro} Every vector bundle $\Vcal$ on $\schff$ admits a canonical Harder-Narasimhan filtration which splits into a direct decomposition
\[
\Vcal \simeq \bigoplus_i \trivbundle(\lambda_i)^{\oplus m_i}
\]
where $\lambda_i$'s run over the Harder-Narasimhan slopes of $\Vcal$; in other words, the isomorphism class of $\Vcal$ is determined by the Harder-Narasimhan polygon $\HN(\Vcal)$ of $\Vcal$ (as defined in Definition \ref{def of HN filtration, decomposition and polygon}).
\end{enumerate}
\end{theorem}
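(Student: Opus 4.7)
The plan is to follow Fargues--Fontaine's scheme-theoretic approach, using Kedlaya's slope filtration theorem as the deep analytic input at the crucial juncture. First I would construct $\schff$ explicitly as $\Proj\bigl(\bigoplus_{d\geq 0} B^{\varphi = \uniformizer^d}\bigr)$, where $B = B_{\finext, \algclosedperfdfield}$ is the ring of convergent power series on the punctured open unit disc attached to the pair $(\finext, \algclosedperfdfield)$. One verifies that $\schff$ is a Dedekind scheme whose closed points correspond to $\varphi$-orbits of primitive degree-one elements; part~(1) would then follow from a Weierstrass-type factorization in $B$, which expresses every nonzero element of each graded piece as a unit times a product of primitive degree-one factors with multiplicities summing to the degree, and thus forces the divisor of any nonzero rational function on $\schff$ to have degree zero.

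Next, I would build the stable bundles of $\Q$-slope by first defining $\trivbundle(d)$ for integers $d$ directly from the graded module $\bigoplus_n B^{\varphi = \uniformizer^{n+d}}$, and then obtaining $\trivbundle(d/h)$ with $\gcd(d,h)=1$ as the pushforward of $\trivbundle(d)$ along the finite \'etale cover $\schff_{\finext_h, \algclosedperfdfield} \to \schff$ induced by the unramified extension $\finext_h/\finext$ of degree $h$. Analysis of $\varphi$-invariants via the fundamental exact sequence $0 \to \finext \to B^{\varphi=1} \to B/B^+ \to 0$ and its twists would yield the cohomology vanishings
\[
H^0(\schff, \trivbundle(\lambda)) = 0 \text{ if } \lambda < 0, \qquad H^1(\schff, \trivbundle(\lambda)) = 0 \text{ if } \lambda \geq 0,
\]
together with the existence half of (2), namely that each $\trivbundle(\lambda)$ is stable of slope $\lambda$.

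Granted these computations, I would prove (4) by induction on the rank. Applied to internal Hom bundles, the cohomology vanishings force $\Ext^1(\trivbundle(\lambda_1), \trivbundle(\lambda_2)) = 0$ whenever $\lambda_2 \geq \lambda_1$, so the Harder--Narasimhan filtration of an arbitrary bundle $\Vcal$ on $\schff$ splits canonically into semistable graded pieces. The direct sum decomposition in (4) then reduces to the classification of semistable bundles claimed in (3).

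The main obstacle is proving (3) together with the uniqueness half of (2): every semistable bundle of slope $\lambda$ is isomorphic to $\trivbundle(\lambda)^{\oplus m}$. Twisting by an appropriate $\trivbundle(-\lambda)$, and passing to a suitable unramified cover to clear denominators, reduces this to the statement that every semistable bundle of slope zero is trivial. Under the equivalence between vector bundles on $\schff$ and $\varphi$-modules over $B$, this is exactly Kedlaya's slope filtration theorem for $\varphi$-modules over the Robba ring (a relative Dieudonn\'e--Manin-type classification), and it is where essentially all the analytic difficulty of the theorem is concentrated; once granted, the remaining parts follow by the formal unwinding sketched above.
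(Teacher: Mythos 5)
This theorem is recalled as background (attributed to Fargues--Fontaine and Kedlaya), and the paper does not prove it in full; but your sketch follows the same framework that the paper lays out in Section 2. In particular, the one piece the paper actually reproves --- the splitting of the Harder--Narasimhan filtration (Theorem \ref{existence of HN decomp}) --- is established exactly as you describe: by induction on the filtration length, using the identification $\Ext^1(\trivbundle(\lambda_l),\trivbundle(\lambda_i)) \simeq H^1(\adicff, \trivbundle(\lambda_i-\lambda_l)^{\oplus n_i})$ together with the vanishing of $H^1$ in nonnegative slopes (Theorem \ref{cohomological vanishing of stable bundles}). Your identification of the crux --- classifying semistable bundles, reduced by twisting and unramified base change to the slope-zero case --- also agrees with the paper's remark after Proposition \ref{classification of semistable bundles} that this ``is indeed the technical crux''. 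One caveat worth flagging: attributing the slope-zero classification directly to ``Kedlaya's slope filtration theorem for $\varphi$-modules over the Robba ring'' glosses over the fact that the relevant ring here is (an extended variant of) the Robba ring attached to the algebraically closed perfectoid base $\algclosedperfdfield$; Fargues--Fontaine give their own argument via Banach--Colmez spaces, and the $[$Ked08$]$ citation covers the extended-Robba-ring version. Your parenthetical ``relative Dieudonn\'e--Manin-type classification'' hedges this adequately, so there is no gap, merely a compression of the most technical input.
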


For a vector bundle $\Vcal$ with a direct sum decomposition as in \ref{HN decomp of vector bundles, intro} of Theorem \ref{classification of vector bundles on FF curve, intro}, we set
\[ \Vcal^{\leq \mu}:= \bigoplus_{\lambda_i \leq \mu} \trivbundle(\lambda_i)^{\oplus m_i}\quad\quad \text{ and } \quad\quad \Vcal^{\geq \mu}:= \bigoplus_{\lambda_i \geq \mu} \trivbundle(\lambda_i)^{\oplus m_i} \quad\quad\text{ for every } \mu \in \Q.\]
Now we can state our main result as follows:

\begin{theorem}\label{classification of quotient bundles, intro}
Let $\Ecal$ be a vector bundle on $\schff$. Then a vector bundle $\Fcal$ on $\schff$ is a quotient bundle of $\Ecal$ if and only if the following equivalent conditions are satisfied:
\begin{enumerate}[label=(\roman*)]
\item\label{rank inequalities for quotients, intro} For every $\mu \in \Q$, we have $\rank(\Ecal^{\leq \mu}) \geq \rank(\Fcal^{\leq \mu})$ with equality if and only if $\Ecal^{\leq \mu}$ and $\Fcal^{\leq \mu}$ are isomorphic. 

\smallskip




\item\label{dual strong slopewise dominance for quotients, intro} If we align the Harder-Narasimhan polygons $\HN(\Ecal)$ and $\HN(\Fcal)$ so that their right endpoints lie at the origin, then for each $i = 1, \cdots, \rank(\Fcal)$, the slope of $\HN(\Fcal)$ on $[-i, -i+1]$ is greater than or equal to the slope of $\HN(\Ecal)$ on $[-i-1, -i]$ unless $\HN(\Ecal)$ and $\HN(\Fcal)$ agree on $[-i, 0]$. 
\end{enumerate}

\smallskip
\begin{figure}[H]
\begin{tikzpicture}[scale=1]	

		\coordinate (right) at (0, 0);
		\coordinate (q0) at (-1,2);
		\coordinate (q1) at (-2.5, 3.4);
		\coordinate (q2) at (-6, 4.8);
		\coordinate (q3) at (-9, 4);
		

		\coordinate (p0) at (-2, 1.5);
		\coordinate (p1) at (-4.5, 2);
		\coordinate (p2) at (-6, 1.3);
		\coordinate (p3) at (-7, 0.1);
				
		\draw[step=1cm,thick] (right) -- (q0) --  (q1) -- (q2) -- (q3);
		\draw[step=1cm,thick] (right) -- (p0) --  (p1) -- (p2) -- (p3);
		
		\draw [fill] (q0) circle [radius=0.05];		
		\draw [fill] (q1) circle [radius=0.05];		
		\draw [fill] (q2) circle [radius=0.05];		
		\draw [fill] (q3) circle [radius=0.05];		
		\draw [fill] (right) circle [radius=0.05];
		
		\draw [fill] (p0) circle [radius=0.05];		
		\draw [fill] (p1) circle [radius=0.05];		
		\draw [fill] (p2) circle [radius=0.05];		
		\draw [fill] (p3) circle [radius=0.05];		
		
		\draw[step=1cm,dotted] (-6.5, -0.4) -- (-6.5, 5);
       		\draw[step=1cm,dotted] (-5.5, -0.4) -- (-5.5, 5);
       		\draw[step=1cm,dotted] (-6, -0.4) -- (-6, 5);

		\node at (-6.8,-0.8) {\scriptsize $-i-1$};
		\node at (-5.2,-0.8) {\scriptsize $-i+1$};
		\node at (-6,-0.8) {\scriptsize $-i$};
		
		\path (q3) ++(-0.8, 0.05) node {$\HN(\Ecal)$};
		\path (p3) ++(-0.8, 0.05) node {$\HN(\Fcal)$};
		\path (right) ++(0.3, -0.05) node {$O$};

\end{tikzpicture}
\caption{Illustration of the condition \ref{dual strong slopewise dominance for quotients, intro} in Theorem \ref{classification of quotient bundles, intro}.}
\end{figure}
\end{theorem}


The necessity part of Theorem \ref{classification of quotient bundles, intro} is a consequence of the slope formalism for vector bundles on the Fargues-Fontaine curve. Hence Theorem \ref{classification of quotient bundles, intro} asserts that the slope formalism is the only obstruction for the existence of a surjective bundle map between two given vector bundles on $\schff$. We emphasize that this is a very unique feature for the slope category of vector bundles on the Fargues-Fontaine curve. The main reason for this feature is that for any given vector bundles $\Vcal, \Wcal$ on $\schff$ the space $\mathrm{Hom}(\Vcal, \Wcal)$ is either empty or huge, where the nonemptiness is determined by the slope formalism. 


If we take $\Ecal = \trivbundle_\schff^{\oplus n}$ for some positive integer $n$ in Theorem \ref{classification of quotient bundles, intro}, we obtain the following classification of finitely globally generated vector bundles on $\schff$. 

\begin{cor}\label{classification of globally generated bundles, intro}
A vector bundle $\Fcal$ on $\schff$ is generated by $n$ global sections if and only if the following conditions are satisfied:
\begin{enumerate}[label=(\roman*)]
\item\label{nonpositivity of slopes for globally generated bundles, intro} All Harder-Narasimhan slopes of $\Fcal$ are nonnegative. 

\item\label{rank bound for globally generated bundles, intro} $\rank(\Fcal) \leq n$ with equality if and only if $\Fcal \simeq \trivbundle_\schff^{\oplus n}$. 
\end{enumerate}
\end{cor}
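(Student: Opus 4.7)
The plan is to deduce the corollary directly from Theorem \ref{classification of quotient bundles, intro} applied to $\Ecal = \trivbundle_\schff^{\oplus n}$. Since global sections of a vector bundle $\Fcal$ on $\schff$ correspond bijectively to maps $\trivbundle_\schff \to \Fcal$, the bundle $\Fcal$ is generated by $n$ global sections if and only if there exists a surjection $\trivbundle_\schff^{\oplus n} \twoheadrightarrow \Fcal$, i.e., if and only if $\Fcal$ is a quotient bundle of $\trivbundle_\schff^{\oplus n}$. Thus the task reduces to translating conditions \ref{rank inequalities for quotients, intro} and \ref{equal rank condition for quotient bundles, intro} of the main theorem into the conditions \ref{nonpositivity of slopes for globally generated bundles, intro} and \ref{rank bound for globally generated bundles, intro} of the corollary.

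For this I would first observe that $\trivbundle_\schff^{\oplus n}$ is semistable of slope $0$, so
\[
(\trivbundle_\schff^{\oplus n})^{\leq \mu} = 0 \text{ for } \mu < 0, \qquad (\trivbundle_\schff^{\oplus n})^{\leq \mu} = \trivbundle_\schff^{\oplus n} \text{ for } \mu \geq 0.
\]
Feeding this into condition \ref{rank inequalities for quotients, intro}, the inequality for $\mu < 0$ becomes $0 \geq \rank(\Fcal^{\leq \mu})$, which forces $\Fcal^{\leq \mu} = 0$ for every $\mu < 0$ and hence forces all Harder-Narasimhan slopes of $\Fcal$ to be nonnegative; this recovers condition \ref{nonpositivity of slopes for globally generated bundles, intro} of the corollary. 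For $\mu \geq 0$ the inequality reads $n \geq \rank(\Fcal^{\leq \mu})$, and taking $\mu$ large enough so that $\Fcal^{\leq \mu} = \Fcal$ yields $\rank(\Fcal) \leq n$.

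Finally, the equality clause in \ref{rank bound for globally generated bundles, intro} is extracted from condition \ref{equal rank condition for quotient bundles, intro} of the theorem. For $\mu < 0$ equality in \ref{rank inequalities for quotients, intro} is automatic and the corresponding instance of \ref{equal rank condition for quotient bundles, intro} reduces to $0 \simeq 0$, hence is vacuous. For $\mu \geq 0$, equality can occur only when $\rank(\Fcal) = n$, in which case $\Fcal^{\leq \mu} = \Fcal$ for all sufficiently large $\mu$ and \ref{equal rank condition for quotient bundles, intro} then forces $\Fcal \simeq \trivbundle_\schff^{\oplus n}$; conversely, if $\rank(\Fcal) < n$ this clause is never triggered. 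Because the argument is a pure specialization of the main theorem, there is no genuine obstacle here; the only point that requires some care is correctly identifying which values of $\mu$ activate the equality clause.
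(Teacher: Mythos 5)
Your proposal is correct and follows essentially the same route as the paper's own proof: specialize Theorem \ref{classification of quotient bundles, intro} to $\Ecal = \trivbundle_\schff^{\oplus n}$, compute $(\trivbundle_\schff^{\oplus n})^{\leq \mu}$ by cases on the sign of $\mu$, and track when the equality clause of the theorem is triggered. The only minor imprecision is in the final paragraph, where "for all sufficiently large $\mu$" slightly blurs the fact that equality $\rank(\Fcal^{\leq \mu_0}) = n$ at any particular $\mu_0 \geq 0$ already forces $\Fcal^{\leq \mu_0} = \Fcal$ (since $\rank(\Fcal) \leq n$), so the equality clause kicks in at $\mu_0$ itself rather than requiring a limiting argument.
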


In addition, dualizing the statement of Theorem \ref{classification of quotient bundles, intro} yields a classification of a majority of subsheaves of a given vector bundle on $\schff$.

\begin{cor}\label{almost classification of subbundles, intro}
Let $\Ecal$ be a vector bundle on $\schff$. Then a vector bundle $\Dcal$ on $\schff$ is 
a subsheaf of $\Ecal$ if the following equivalent conditions are satisfied:
\begin{enumerate}[label=(\roman*)]
\item\label{rank inequalities for subbundles, intro} For every $\mu \in \Q$, we have $\rank(\Ecal^{\geq \mu}) \geq \rank(\Dcal^{\geq \mu})$ with equality if and only if $\Ecal^{\geq \mu}$ and $\Dcal^{\geq \mu}$ are isomorphic. 

\smallskip




\item\label{strong slopewise dominance for subbundles, intro} If we align the Harder-Narasimhan polygons $\HN(\Dcal)$ and $\HN(\Ecal)$ so that their left endpoints lie at the origin, then for each $i = 1, \cdots, \rank(\Dcal)$, the slope of $\HN(\Dcal)$ on $[i-1, i]$ is less than or equal to the slope of $\HN(\Ecal)$ on $[i, i+1]$ unless $\HN(\Dcal)$ and $\HN(\Ecal)$ agree on $[0, i]$. 
\end{enumerate}

\smallskip
\begin{figure}[H]
\begin{tikzpicture}[scale=1]

		\coordinate (left) at (0, 0);
		\coordinate (q0) at (1,2);
		\coordinate (q1) at (2.5, 3.4);
		\coordinate (q2) at (6, 4.8);
		\coordinate (q3) at (9, 4);
		

		\coordinate (p0) at (2, 1.5);
		\coordinate (p1) at (4.5, 2);
		\coordinate (p2) at (6, 1.3);
		\coordinate (p3) at (7, 0.1);
				
		\draw[step=1cm,thick] (left) -- (q0) --  (q1) -- (q2) -- (q3);
		\draw[step=1cm,thick] (left) -- (p0) --  (p1) -- (p2) -- (p3);
		
		\draw [fill] (q0) circle [radius=0.05];		
		\draw [fill] (q1) circle [radius=0.05];		
		\draw [fill] (q2) circle [radius=0.05];		
		\draw [fill] (q3) circle [radius=0.05];		
		\draw [fill] (left) circle [radius=0.05];
		
		\draw [fill] (p0) circle [radius=0.05];		
		\draw [fill] (p1) circle [radius=0.05];		
		\draw [fill] (p2) circle [radius=0.05];		
		\draw [fill] (p3) circle [radius=0.05];		
		
		\draw[step=1cm,dotted] (5.5, -0.4) -- (5.5, 5);
       		\draw[step=1cm,dotted] (6.5, -0.4) -- (6.5, 5);
       		\draw[step=1cm,dotted] (6, -0.4) -- (6, 5);

		\node at (5.2,-0.8) {\scriptsize $i-1$};
		\node at (6.8,-0.8) {\scriptsize $i+1$};
		\node at (6,-0.8) {\scriptsize $i$};
		
		\path (q3) ++(0.8, 0.05) node {$\HN(\Ecal)$};
		\path (p3) ++(0.8, 0.05) node {$\HN(\Dcal)$};
		\path (right) ++(-0.3, -0.05) node {$O$};

\end{tikzpicture}
\caption{Illustration of the condition \ref{strong slopewise dominance for subbundles, intro} in Corollary \ref{almost classification of subbundles, intro}.}
\end{figure}
\end{cor}

We remark that Corollary \ref{almost classification of subbundles, intro} does not give a complete classification of all subsheaves. The main issue is that the cokernel of an injective bundle map may have a torsion while the kernel of a surjective bundle map is always torsion-free. In fact, Corollary \ref{almost classification of subbundles, intro} gives a complete classification of all subsheaves with torsion-free cokernel. 

It is also worthwhile to note that our main result holds verbatim for the projective line $\PP^1$ over an arbitrary field. While the statement for $\PP^1$ can be proved by some elementary linear algebra, it can also be seen by the same proof for Theorem \ref{classification of quotient bundles, intro} using a dimension formula for spaces of bundle maps between two vector bundles on $\PP^1$. 
We refer the readers to the appendix of this article for details.


\subsection{Applications of the main result}\label{intro applications}$ $

Our main result and its proof have applications in some problems that naturally arise in $p$-adic geometry. 

First, Theorem \ref{classification of quotient bundles, intro} has an application towards the problem of classifying all vector bundles $\Ecal$ on $\schff$ that arise as an extension of two fixed vector bundles $\Dcal$ and $\Fcal$ on $\schff$. In fact, this is the main motivating problem for our work, as it naturally arises in the study of geometric objects such as the stack of vector bundles on $\schff$ and the flag varieties. When both $\Dcal$ and $\Fcal$ are semistable, we have a complete answer by the work of the author and his collaborators in \cite{Arizona_extvb}, which in turn leads to an explicit description of the connected components of the stack of vector bundles on $\schff$ by Hansen \cite{Hansen_degenvb}. In the subsequent paper \cite{Hong_extvb}, the author applies Theorem \ref{classification of quotient bundles, intro} to extend the main result of \cite{Arizona_extvb} as follows:

\begin{thm}[{\cite[Theorem 3.2.1]{Hong_extvb}}]\label{classification of extensions under semistable assumption}
Let $\Dcal, \Ecal$, and $\Fcal$ be vector bundles on $\schff$ such that one of $\Dcal, \Ecal$, and $\Fcal$ is semistable. Assume that the maximum slope of $\Dcal$ is less than the minimum slope of $\Fcal$. Then there exists a short exact sequence of the form
\[0 \longrightarrow \Dcal \longrightarrow \Ecal \longrightarrow \Fcal \longrightarrow 0\]
if and only if the following conditions are satisfied:
\begin{enumerate}[label = (\roman*)]
\item\label{extension existence of surj map} For every $\mu \in \Q$, we have $\rank(\Ecal^{\leq \mu}) \geq \rank(\Fcal^{\leq \mu})$ with equality if and only if $\Ecal^{\leq \mu}$ and $\Fcal^{\leq \mu}$ are isomorphic. 
\smallskip

\item\label{extension existence of dual surj map} For every $\mu \in \Q$, we have $\rank(\Ecal^{\geq \mu}) \geq \rank(\Dcal^{\geq \mu})$ with equality if and only if $\Ecal^{\geq \mu}$ and $\Dcal^{\geq \mu}$ are isomorphic.
\smallskip

\item\label{extension HN polygons inequality} $\HN(\Dcal \oplus \Fcal) \geq \HN(\Ecal)$, which means that $\HN(\Dcal \oplus \Fcal)$ lies above $\HN(\Ecal)$ with the same endpoints. 
\end{enumerate}
\end{thm}

The conditions \ref{extension existence of surj map} and \ref{extension existence of dual surj map} can be stated purely in terms of HN polygons as in Theorem \ref{classification of quotient bundles, intro}. Note that these conditions are clearly necessary; indeed, they are equivalent to existence of surjective bundle maps $\Ecal \surj \Fcal$ and $\Ecal^\vee \surj \Dcal^\vee$ by Theorem \ref{classification of quotient bundles, intro}, where $\Ecal^\vee$ and $\Dcal^\vee$ denote the duals of $\Ecal$ and $\Dcal$. In addition, the necessity of the condition \ref{extension HN polygons inequality} is a consequence of the slope formalism. 

We expect that Theorem \ref{classification of extensions under semistable assumption} holds without the semistability assumption on one of $\Dcal, \Ecal$, or $\Fcal$. With this generalization of Theorem \ref{classification of extensions under semistable assumption}, we should be able to obtain an explicit description on the geometry of the $p$-adic flag variety in terms of two natural stratifications, namely the Harder-Narasimhan stratification and the Newton stratification, in line with the work of many authors including Caraiani-Scholze \cite{CS_annals}, Chen-Fargues-Shen \cite{CFS_admlocus}, Shen \cite{Shen_HNstrata}, Chen \cite{Chen_FRconjnonbasic}, Viehmann \cite{Viehmann_weakadmlocNewton}, and Nguyen-Viehmann \cite{NV_HNstrata}.

As another application, the author in the sequel paper \cite{Hong_subvb} adapts our argument and several constructions from this paper to establish a complete classification for subsheaves as follows:
\begin{thm}[{\cite[Theorem 3.1.1]{Hong_subvb}}]\label{conjecture classification of subbundles, intro}
Let $\Ecal$ be a vector bundle on $\schff$. Then a vector bundle $\Dcal$ is a subsheaf of $\Ecal$ if and only if it satisfies the following equivalent conditions:
\begin{enumerate}[label=(\roman*)]
\item\label{rank inequalities for subsheaves, intro} For every $\mu \in \Q$, we have $\rank(\Ecal^{\geq \mu}) \geq \rank(\Dcal^{\geq \mu})$.

\smallskip

\item\label{slopewise dominance for subsheaves, intro} If we align the Harder-Narasimhan polygons $\HN(\Dcal)$ and $\HN(\Ecal)$ so that their left endpoints lie at the origin, then for each $i = 1, \cdots, \rank(\Dcal)$, the slope of $\HN(\Dcal)$ on $[i-1, i]$ is less than or equal to the slope of $\HN(\Ecal)$ on $[i-1, i]$.  
\end{enumerate}
\end{thm}
Theorem \ref{conjecture classification of subbundles, intro} in particular gives a classification of all subsheaves $\Dcal$ of $\Ecal$ such that $\Ecal/\Dcal$ is a torsion sheaf. Such subsheaves are of particular interest as they arise from \emph{modifications of vector bundles}, which play a pivotal role in the study of various geometric objects such as the $B_{\dR}^+$-affine Grassmannians and the moduli of local shtukas.


\subsection{Outline of the strategy}\label{introstrategy}$ $

It is relatively easy to see that the condition \ref{rank inequalities for quotients, intro} in Theorem \ref{classification of quotient bundles, intro} is indeed necessary and that it is equivalent to the condition \ref{dual strong slopewise dominance for quotients, intro}.  Therefore the main part of our proof will concern the sufficiency of the condition \ref{rank inequalities for quotients, intro} in Theorem \ref{classification of quotient bundles, intro}. 

Our argument will be based on the dimension counting method for certain moduli spaces of bundle maps as developed in \cite{Arizona_extvb}. We define the moduli functors
\begin{itemize}
\item $\Hom(\Ecal, \Fcal)$ which parametrizes bundle maps $\Ecal \to \Fcal$, and

\item $\Surj(\Ecal, \Fcal)$ which parametrizes surjective bundle maps $\Ecal \surj \Fcal$.
\end{itemize}
These functors are represented by diamonds in the sense of Scholze \cite{Scholze_diamonds}. The goal is to show that 
the diamond $\Surj(\Ecal, \Fcal)$ is not empty if the condition \ref{rank inequalities for quotients, intro} in Theorem \ref{classification of quotient bundles, intro} is satisfied. To this end, we consider auxiliary spaces $\Hom(\Ecal, \Fcal)_\Qcal$ which (roughly) parametrizes bundle maps $\Ecal \to \Fcal$ with image isomorphic to a specified subsheaf $\Qcal$ of $\Fcal$. Then showing nonemptiness of $\Surj(\Ecal, \Fcal)$ boils down to establishing the following inequality on dimensions of the topological spaces:
\begin{equation}\label{key inequality intro}
\dim |\Hom(\Ecal, \Fcal)_\Qcal| < \dim |\Hom(\Ecal, \Fcal)| \quad\quad \text{ if } \Qcal \neq \Fcal.
\end{equation}
The dimension theory for diamonds allows us to rewrite this inequality in terms of degrees of certain vector bundles related to $\Ecal, \Fcal$ and $\Qcal$. 	

However, the details of our arguments are 
completely different from those in \cite{Arizona_extvb}. The main reason is that, unlike the quantities considered in \cite{Arizona_extvb}, the quantities we need to study in this paper do not generally have good interpretations in terms of areas of polygons related to the Harder-Narasimhan slopes. In fact, our proof of the inequality \eqref{key inequality intro} will consist of a series of reduction steps as follows:
\begin{enumerate}[label=Step \arabic*., leftmargin=5.3em]
\item\label{reduction to integer slopes, intro} We reduce the proof of \eqref{key inequality intro} to the case when all slopes of $\Ecal, \Fcal$ and $\Qcal$ are integers. 

\item\label{reduction to equal ranks, intro} We further reduce the proof of \eqref{key inequality intro} to the case $\rank(\Qcal) = \rank(\Fcal)$. 

\item\label{reduction to equal slopes, intro} When $\rank(\Qcal) = \rank(\Fcal)$, we complete the proof of \eqref{key inequality intro} by gradually ``reducing" the slopes of $\Fcal$ to the slopes of $\Qcal$.  
\end{enumerate}

As a key ingredient of our reduction argument, we introduce and study the notion of \emph{slopewise dominance} for vector bundles on the Fargues-Fontaine curve. This notion provides a combinatorial interpretation of the inequality in the condition \ref{rank inequalities for quotients, intro} of Theorem \ref{classification of quotient bundles, intro} in terms of Harder-Narasimhan polygons, and allows us to use the equivalence between the conditions \ref{rank inequalities for quotients, intro} and \ref{dual strong slopewise dominance for quotients, intro} of Theorem \ref{classification of quotient bundles, intro} to its full capacity. In particular, this notion yields several implications of the condition \ref{dual strong slopewise dominance for quotients, intro} which are difficult to directly deduce from the condition \ref{rank inequalities for quotients, intro}, and plays a pivotal role in our process of ``reducing" the slopes of $\Fcal$ to the slopes of $\Qcal$ in \ref{reduction to equal slopes, intro}. This notion is also crucial for applications of Theorem \ref{classification of quotient bundles, intro}, such as Theorem \ref{conjecture classification of subbundles, intro} and Theorem \ref{classification of extensions under semistable assumption} which are discussed in the sequel papers \cite{Hong_extvb} and \cite{Hong_subvb}.



\subsection*{Acknowledgments} The major part of this study was done at the Oberwolfach workshop on the arithmetic of Shimura varieties. The author would like to thank the organizers of the workshop for creating such a wonderful academic environment. The author also would like to sincerely thank David Hansen for a stimulating discussion about the problem, and the anonymous referee for their valuable suggestions which greatly helped in improving and clarifying the manuscript.


\section{Preliminaries on the Fargues-Fontaine curve}\label{background}

\subsection{The construction}$ $

Throughout this paper, we fix the following data:
\begin{itemize}
\item $p$ is a prime number;

\item $\finext$ is a finite extension of $\Q_p$ with residue field $\F_q$;

\item $\algclosedperfdfield$ is an algebraically closed perfectoid field of characteristic $p$. 
\end{itemize}
The Fargues-Fontaine curve can be constructed in two different flavors, namely as a scheme and as an adic space. We first present the construction as an adic space since it is simpler to describe than the construction as a scheme is. 


\begin{defn}\label{adicFFC} Denote by $\integerring{\finext}$ and $\integerring{\algclosedperfdfield}$ the rings of integers of $\finext$ and $\algclosedperfdfield$, respectively. Let $\uniformizer$ be a uniformizer of $\finext$, and let $\pseudounif$ be a pseudouniformizer of $\algclosedperfdfield$. We write $\witt_{\finext^\circ}(\integerring{\algclosedperfdfield}):=\witt(\integerring{\algclosedperfdfield}) \otimes_{\witt(\F_q)} \integerring{\finext}$ for the ring  of ramified Witt vectors of $\integerring{\algclosedperfdfield}$ with coefficients in $\integerring{\finext}$, and $[\pseudounif]$ for the Teichmuller lift of $\pseudounif$. Define
\[
\Ycal_{\finext,\algclosedperfdfield}:=\Spa(\witt_{\integerring{\finext}}(\integerring{\algclosedperfdfield}))\setminus\{|p[\pseudounif]|=0\},
\]
and let $\phi:\Ycal_{\finext,\algclosedperfdfield}\to \Ycal_{\finext,\algclosedperfdfield}$ be the Frobenius automorphism of $\Ycal_{\finext,\algclosedperfdfield}$ induced by the $q$-Frobenius $\varphi_q$ on $\witt_{\integerring{\finext}}(\integerring{\algclosedperfdfield})$. The (mixed-characteristic) \emph{adic Fargues-Fontaine curve} associated to the pair $(\finext, \algclosedperfdfield)$ is
\[
\adicff_{\finext,\algclosedperfdfield}:=\Ycal_{\finext,\algclosedperfdfield}/\phi^\Z.
\]
\end{defn}

\begin{remark}
This definition makes sense since the action of $\phi$ on $\Ycal_{\finext,\algclosedperfdfield}$ turns out to be properly discontinuous. 
\end{remark}

\begin{prop}[{\cite[Theorem 4.10]{Kedlaya_noeth}}] $\adicff_{\finext,\algclosedperfdfield}$ is a Noetherian adic space over $\Spa(\finext)$.
\end{prop}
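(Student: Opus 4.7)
The plan is to verify the two claims locally: first that $\Ycal_{\finext,\algclosedperfdfield}$ is a Noetherian (pre-)adic space, and then that the Frobenius action on it is totally discontinuous in a way compatible with forming the quotient. The central input I would invoke is the strong noetherianness of the ``extended Robba-type'' Banach rings established by Kedlaya and Kedlaya--Liu, since the Fargues-Fontaine curve is built by glueing spectra of such rings.

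First, I would exhibit $\Ycal_{\finext,\algclosedperfdfield}$ as an ascending union of rational subdomains $\Ycal_{[r,s]}$ for $0 < r \leq s < \infty$, cut out by inequalities of the form $|\uniformizer|^s \leq |[\pseudounif]| \leq |\uniformizer|^r$ (with a suitable normalisation). Each $\Ycal_{[r,s]}$ is the adic spectrum $\Spa(B_{[r,s]}, B_{[r,s]}^+)$ of a Banach $\finext$-algebra $B_{[r,s]}$ obtained by completing $\witt_{\integerring{\finext}}(\integerring{\algclosedperfdfield})[1/\uniformizer, 1/[\pseudounif]]$ with respect to a pair of Gauss-type valuations. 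The key technical point, supplied by Kedlaya's work, is that each $B_{[r,s]}$ is strongly Noetherian --- in fact a principal ideal domain whose maximal ideals correspond to the classical points of the curve --- and that the structure presheaf is a genuine sheaf on the rational cover. Consequently $\Ycal_{\finext,\algclosedperfdfield}$ is an honest adic space whose affinoid charts have Noetherian rings of functions.

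Next, I would analyse the Frobenius $\phi$. Its effect on the ``radius'' parameter $\rho := \log|[\pseudounif]|/\log|\uniformizer|$ is multiplication by $q^{\pm 1}$, so $\phi$ carries $\Ycal_{[r,s]}$ to $\Ycal_{[r/q, s/q]}$. For any $s > 0$ the subspace $\Ycal_{[s/q, s]}$ is then a closed fundamental annulus for the $\phi^\Z$-action: every orbit meets it, and successive translates overlap only along a single boundary rational subdomain identified via $\phi$. The action is thus free and totally discontinuous, so the quotient $\adicff_{\finext,\algclosedperfdfield} = \Ycal_{\finext,\algclosedperfdfield}/\phi^\Z$ may be formed as an adic space by glueing the two boundary edges of this fundamental annulus. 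Since the resulting space is covered by finitely many affinoids of the form $\Spa(B_{[r,s]}, B_{[r,s]}^+)$ with $B_{[r,s]}$ Noetherian, and is manifestly quasi-compact and quasi-separated, it is a Noetherian adic space over $\Spa(\finext)$.

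The main obstacle in this plan is the strong noetherianness of $B_{[r,s]}$, which requires a Weierstrass-type preparation theorem in the extended Robba setting and does not follow from soft formal arguments; this is the content of the cited result of Kedlaya, and once granted all other steps reduce to bookkeeping about the rational cover, the comparison of affinoid and adic-space structures, and the identification of a fundamental domain for $\phi^\Z$.
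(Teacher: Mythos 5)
The paper does not reproduce a proof of this proposition; it is cited from Kedlaya's paper on Noetherian properties of Fargues-Fontaine curves, so the relevant comparison is with that reference rather than with anything in this text. Your sketch correctly reconstructs the structure of Kedlaya's argument: exhaust $\Ycal_{\finext,\algclosedperfdfield}$ by rational annuli $\Ycal_{[r,s]} = \Spa(B_{[r,s]}, B_{[r,s]}^+)$, invoke the hard theorem that each $B_{[r,s]}$ is strongly Noetherian (and indeed a PID when $\algclosedperfdfield$ is algebraically closed), observe that $\phi$ scales the radius parameter by $q$ so that a closed annulus of width a factor of $q$ is a fundamental domain for the free $\phi^\Z$-action, and glue its two boundary circles to obtain the quotient as a quasi-compact adic space covered by finitely many strongly Noetherian affinoids. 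You also correctly identify the strong Noetherianness as the non-formal input requiring Weierstrass-type preparation in the extended Robba setting, with the rest being bookkeeping.

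One refinement worth making explicit: for $\Ycal_{\finext,\algclosedperfdfield}$ (and hence the quotient) to be an honest adic space rather than a pre-adic space, one needs sheafiness of the structure presheaf on the affinoids $\Ycal_{[r,s]}$, which is not automatic since $B_{[r,s]}$ is not a Tate algebra over a nonarchimedean field in the classical sense. Historically this was first handled by the Kedlaya--Liu ``stably uniform'' criterion for sheafiness, independently of Noetherian considerations; the strong Noetherianness proved in \cite{KedNoeth} gives a second route via Huber's theorem. Your write-up folds sheafiness into the same sentence as strong Noetherianness, which is logically fine given the reference, but it obscures the fact that these were originally separate results. Apart from that presentational point, your outline is correct and captures the cited proof.
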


\begin{remark}
When $\finext$ is replaced by a finite extension of $\F_p((t))$, there is a related construction by Hartl-Pink \cite{HP_equalcharFFcurve} which we may regard as the equal-characteristic Fargues-Fontaine curve. Our main results are equally valid for vector bundles on the equal-characteristic Fargues-Fontaine curve with the same proof. 
\end{remark}

Our next goal is to relate the above construction of $\adicff_{\finext,\algclosedperfdfield}$ to the schematic construction of the Fargues-Fontaine curve. To this end, 
we first define some vector bundles on $\adicff_{\finext,\algclosedperfdfield}$. By descent, giving a vector bundle $\Vcal$ on $\adicff_{\finext,\algclosedperfdfield}$ amounts to giving a $\phi$-equivariant vector bundle $\undescent{\Vcal}$ on $\Ycal_{\finext,\algclosedperfdfield}$, that is, a vector bundle $\undescent{\Vcal}$ on $\Ycal_{\finext,\algclosedperfdfield}$ together with an isomorphism $\phi^*\undescent{\Vcal}\stackrel{\sim}{\to}\undescent{\Vcal}$. 

\begin{defn}
\label{o-r-over-s}
Let $\lambda = r/s$ be a rational number written in lowest terms with $s>0$. Let $v_1, v_2, \cdots, v_s$ be a trivializing basis of $\trivbundle_{\Ycal_{\finext,\algclosedperfdfield}}^{\oplus s}$. Define an isomorphism $\phi^* \trivbundle_{\Ycal_{\finext,\algclosedperfdfield}}^{\oplus s} \stackrel{\sim}{\to} \trivbundle_{\Ycal_{\finext,\algclosedperfdfield}}^{\oplus s}$ by 
\[ v_1 \mapsto v_2, \quad v_2 \mapsto v_3, \quad\cdots, \quad v_{s-1} \mapsto v_s, \quad v_s \mapsto \uniformizer^{-r} v_1,\]
where we abuse notation to view $v_1, v_2, \cdots, v_s$ as a trivializing basis for $\phi^* \trivbundle_{\Ycal_{\finext,\algclosedperfdfield}}^{\oplus s}$ as well. We write $\trivbundle(\lambda)$ for the vector bundle on $\adicff_{\finext,\algclosedperfdfield}$ corresponding to the vector bundle $\trivbundle_{\Ycal_{\finext,\algclosedperfdfield}}^{\oplus s}$ with the isomorphism $\phi^* \trivbundle_{\Ycal_{\finext,\algclosedperfdfield}}^{\oplus s} \stackrel{\sim}{\to} \trivbundle_{\Ycal_{\finext,\algclosedperfdfield}}^{\oplus s}$ as defined above. 
\end{defn}

The following fact suggests that we can regard $\trivbundle(1)$ as an ``ample" line bundle on $\adicff_{\finext,\algclosedperfdfield}$. 
\begin{prop}[{\cite[Lemma 8.8.4 and Proposition 8.8.6]{KL15}}] Let $\Fcal$ be a coherent sheaf on $\adicff_{\finext,\algclosedperfdfield}$. Then for all sufficiently large $n \in \Z$, the twisted sheaf $\Fcal(n):= \Fcal \otimes \trivbundle(1)^{\otimes n}$ satisfies the following properties:
\begin{enumerate}[label = (\roman*)]
\item $H^1(\adicff_{\finext,\algclosedperfdfield}, \Fcal(n)) = 0$.
\smallskip

\item The sheaf $\Fcal(n)$ is generated by finitely many global sections. 
\end{enumerate}
\end{prop}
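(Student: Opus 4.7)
The plan is to reduce the statement, via the classification of vector bundles, to cohomological vanishing and global generation for the line bundles $\trivbundle(\lambda)$, after first disposing of the torsion part of $\Fcal$.

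First, since $\adicff_{\finext,\algclosedperfdfield}$ is a $1$-dimensional Noetherian adic space, any coherent sheaf $\Fcal$ fits into a short exact sequence $0 \to \Tcal \to \Fcal \to \Vcal \to 0$, with $\Tcal$ the maximal torsion subsheaf (supported at finitely many closed points) and $\Vcal$ a vector bundle. Twisting by $\trivbundle(1)^{\otimes n}$ preserves this sequence. Since $\Tcal$ has $0$-dimensional support, $H^1(\Tcal(n)) = 0$ for every $n$, and $\Tcal(n)$ becomes globally generated once $n$ is large enough that $\trivbundle(n)$ has sections separating the finitely many support points and generating the finite-length stalks there. The long exact sequence of cohomology together with the snake lemma then reduce both parts (i) and (ii) to the case where $\Fcal = \Vcal$ is a vector bundle.

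For a vector bundle $\Vcal$, I would apply Theorem \ref{classification of vector bundles on FF curve, intro} to write $\Vcal \simeq \bigoplus_i \trivbundle(\lambda_i)^{\oplus m_i}$. A key lemma to establish separately is that $\trivbundle(\lambda) \otimes \trivbundle(1)^{\otimes n}$ is semistable of slope $\lambda + n$, and hence by part (3) of Theorem \ref{classification of vector bundles on FF curve, intro} is isomorphic to a direct sum of copies of $\trivbundle(\lambda + n)$; this uses preservation of semistability under tensor product (itself a consequence of the classification) together with additivity of slope. Thus $\Vcal(n)$ is a direct sum of $\trivbundle(\lambda_i + n)$'s, and for $n$ sufficiently large every slope $\lambda_i + n$ is non-negative. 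It remains to prove (i) and (ii) for a single $\trivbundle(\mu)$ with $\mu \geq 0$.

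The vanishing $H^1(\trivbundle(\mu)) = 0$ for $\mu \geq 0$ is part of the standard cohomology calculation for the stable bundles: via the $\phi$-equivariant description on $\Ycal_{\finext,\algclosedperfdfield}$ it reduces to surjectivity of a $\phi$-twisted operator on a Fr\'echet space of analytic functions, and this surjectivity is controlled precisely by the slope being non-negative. The main obstacle is (ii), global generation of $\trivbundle(\mu)$ for $\mu \geq 0$. Stability prevents a naive induction on rank: a nonzero section $\trivbundle \to \trivbundle(\mu)$ does not split off an $\trivbundle$-summand, so one cannot simply pass to a quotient and induct. Instead, one must show that the evaluation map $H^0(\trivbundle(\mu)) \otimes_{\finext} \trivbundle_{\adicff} \to \trivbundle(\mu)$ is surjective at every point of $\adicff_{\finext,\algclosedperfdfield}$, a uniform pointwise statement. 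This requires controlling the Banach--Colmez space $H^0(\trivbundle(\mu))$ and using its analytic structure to produce, at each closed point, a section that is non-vanishing there. This point-by-point construction of enough global sections is the technical heart of the argument and is where the analytic machinery of \cite{KL15} genuinely enters.
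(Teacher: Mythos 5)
The paper does not prove this proposition; it is quoted directly from \cite[Lemma 8.8.4 and Proposition 8.8.6]{KL15} and used only as background motivation for regarding $\trivbundle(1)$ as an ample line bundle. There is therefore no in-paper argument to compare your attempt to, and citing \cite{KL15}, as the paper does, is the appropriate treatment.

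Your reduction outline is nevertheless correct in shape, with a few points worth tightening. The torsion summand $\Tcal$ is globally generated for \emph{every} $n$, not only $n$ large: a coherent sheaf with zero-dimensional support is a finite direct sum of skyscrapers, so $H^0$ is already the direct sum of the stalks, and twisting by a line bundle does not change the abstract isomorphism type; likewise $H^1(\Tcal(n))=0$ for all $n$. No choice of $n$ is needed for this piece. Your key lemma $\trivbundle(\lambda)\otimes\trivbundle(1)^{\otimes n}\simeq\trivbundle(\lambda+n)$ is most directly extracted from the explicit tensor formula in Lemma \ref{basic properties of stable bundles}\ref{tensor product of stable bundles}: with $\lambda=r/s$ in lowest terms, the multiplicity is $\gcd(s,r+ns)=\gcd(s,r)=1$, so the detour through preservation of semistability is unnecessary. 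Finally, you correctly isolate global generation of $\trivbundle(\mu)$, $\mu\geq 0$, as the hard kernel; note however that for $\mu=p/q$ with $q>1$ one must produce sections spanning the entire rank-$q$ fiber at every point, not just a single non-vanishing section, so your phrasing undersells the higher-rank case. Together with the $H^1$-vanishing of Theorem \ref{cohomological vanishing of stable bundles}, this is exactly what the cited results of \cite{KL15} supply.
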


We now recover the schematic construction of the Fargues-Fontaine curve as follows:
\begin{defn}\label{schematic FF curve}
We define the \emph{schematic Fargues-Fontaine curve} associated to the pair $(\finext,\algclosedperfdfield)$ by 
\[
\schff_{\finext,\algclosedperfdfield} := \Proj \left( \bigoplus_{n \geq 0 } H^0(\adicff_{\finext,\algclosedperfdfield}, \trivbundle(n) ) \right).
\]
\end{defn}

\begin{remark}
The original construction of the schematic Fargues-Fontaine curve in \cite[\S6]{FF_curve} was given in terms of the period ring $B_\cris^+$ in $p$-adic Hodge theory (see also \cite{FF_curvesurvey}, \S4.1):
\[
\schff_{\finext,\algclosedperfdfield} = \Proj \left( \bigoplus_{n \geq 0 } (B_\cris^+)^{\varphi_q = \uniformizer^n}\right).
\]
This definition agrees with Definition \ref{schematic FF curve} via the identification $H^0(\adicff_{\finext,\algclosedperfdfield}, \trivbundle(n) ) \simeq  (B_\cris^+)^{\varphi_q = \uniformizer^n}$.
\end{remark}

\begin{prop}[{\cite[Th\'eor\`eme 6.5.2]{FF_curve}}]\label{why FF curve is a curve} The scheme $\schff_{\finext,\algclosedperfdfield}$ is noetherian, connected, and regular of Krull dimension one.
\end{prop}

\begin{remark}
The scheme $\schff_{\finext,\algclosedperfdfield}$ admits a natural structure morphsim $\schff_{\finext,\algclosedperfdfield} \to \Spec(\finext)$ induced by a canonical isomorphism $(B_\cris^+)^{\varphi_q = 1} \cong \finext$. However, it is not a curve in the usual sense;
in fact, one can show that the residue field at a closed point on $\schff_{\finext,\algclosedperfdfield}$ is a complete algebraically closed extension of $\finext$, which in particular implies that $\schff_{\finext,\algclosedperfdfield}$ is not of finite type over $\finext$. 
\end{remark}

For our purpose, the two constructions of the Fargues-Fontaine curve are essentially equivalent, as we have a version of GAGA for the Fargues-Fontaine curve. 
\begin{theorem}[{\cite[Theorems 6.3.12 and 8.7.7]{KL15}}]\label{GAGA for FF curve}
There is a natural map 
\[
\adicff_{\finext,\algclosedperfdfield} \rightarrow \schff_{\finext,\algclosedperfdfield}
\]
which induces by pullback an equivalence of the categories of vector bundles. 
\end{theorem}

Following Kedlaya-Liu \cite[\S8.7]{KL15}, we can extend the construction of the adic Fargues-Fontaine curve to relative settings.
\begin{defn}\label{relative FF curve}
Let $S = \Spa (\genperfdring, \genperfdring^+)$ be an affinoid perfectoid space over $\Spa (\algclosedperfdfield)$, and let $\pseudounif_\genperfdring$ be a pseudouniformizer of $\genperfdring$. Denote by $\integerring{\finext}$ the ring of integers of $\finext$, and by $\integerring{\genperfdring}$ the ring of power bounded elements of $\genperfdring$. We take the ring of ramified Witt vectors
$\witt_{\finext^\circ}(\genperfdring^+):=\witt(\genperfdring^+) \otimes_{\witt(\F_q)} \integerring{\finext}$
and write $[\pseudounif_\genperfdring]$ for the Teichmuller lift of $\pseudounif_\genperfdring$. Define
\[
\Ycal_{\finext,S}:=\Spa(\witt_{\integerring{\finext}}(\genperfdring^+), \witt_{\integerring{\finext}}(\genperfdring^+))\setminus\{|p[\pseudounif_\genperfdring]|=0\},
\]
and let $\phi:\Ycal_{\finext,S}\to \Ycal_{\finext,S}$ be the Frobenius automorphism of $\Ycal_{\finext,S}$ induced by the $q$-Frobenius $\varphi_q$ on $\witt_{\integerring{\finext}}({\genperfdring}^+)$. The \emph{relative adic Fargues-Fontaine curve} associated to the pair $(\finext, S)$ is 
\[
\adicff_{\finext,S}:=\Ycal_{\finext,S}/\phi^\Z.
\]
More generally, for an arbitrary perfectoid space $S$ over $\Spa(\algclosedperfdfield)$, we choose an affinoid cover $S = \bigcup S_i = \bigcup \Spa(\genperfdring_i, \genperfdring_i^+)$ and define the relative adic Fargues-Fontaine curve $\adicff_{\finext,S}$ by gluing the adic spaces $\adicff_{\finext, S_i}$. 
\end{defn}

\begin{remark}
By construction, the relative curve $\adicff_{\finext, S}$ comes with a natural map $\adicff_{\finext, S} \to \adicff_{\finext, \algclosedperfdfield}$. However, the relative curve $\adicff_{\finext, S}$ cannot be obtained from $\adicff_{\finext, \algclosedperfdfield}$ by base change; indeed, neither $\adicff_{\finext, S}$ nor $\adicff_{\finext, \algclosedperfdfield}$ is defined over $\Spa(\algclosedperfdfield)$. 
\end{remark}

\subsection{Classification of vector bundles and Harder-Narasimhan polygons}\label{basic HN theory for vector bundles on FF curve}$ $

For the rest of this paper, we will simply write $\adicff := \adicff_{\finext,\algclosedperfdfield}$ and $\schff := \schff_{\finext,\algclosedperfdfield}$. Moreover, we will speak interchangeably about vector bundles on $\adicff$ and $\schff$ in light of Theorem \ref{GAGA for FF curve}. 

In this subsection we review the main classification theorem for vector bundles on the Fargues-Fontaine curve and discuss some of its immediate consequences.


\begin{prop}[{\cite[\S8.2.1]{FF_curve}}]\label{completeness of FF curve}
There exists a natural isomorphism from the Picard group $\Pic(\schff)$ to $\Z$, which maps $\trivbundle(d)$ to $d$ for any $d \in \Z$. 
\end{prop}


\begin{defn}
Let $\Vcal$ be a nonzero vector bundle on $\schff$. 
\begin{enumerate}[label=(\arabic*)]
\item We write $\rk(\Vcal)$ for the rank of $\Vcal$, and $\Vcal^\vee$ for the dual bundle of $\Vcal$.  
\smallskip

\item We define the \emph{degree} of $\Vcal$, denoted by $\deg(\Vcal)$, to be the image of the determinant line bundle $\wedge^{\rk(\Vcal)} (\Vcal)$ under the natural isomorphism $\Pic(\schff) \cong \Z$ in Proposition \ref{completeness of FF curve}. 
\smallskip

\item We define the \emph{slope} of $\Vcal$ by
\[\mu(\Vcal) := \dfrac{\deg(\Vcal)}{\rk(\Vcal)}.\]
\end{enumerate}
\end{defn}

Let us now recall the usual notions of stability and semistability. 

\begin{defn}
Let $\Vcal$ be a nonzero vector bundle on $\adicff$. 
\begin{enumerate}[label=(\arabic*)]
\item We say that $\Vcal$ is \emph{stable} if $\mu(\Wcal) < \mu(\Vcal)$ for all nonzero proper subbundles $\Wcal \subset \Vcal$.
\smallskip

\item We say that $\Vcal$ is \emph{semistable} if $\mu(\Wcal) \leq \mu(\Vcal)$ for all nonzero proper subbundles $\Wcal \subset \Vcal$. 
\end{enumerate}
\end{defn}

We collect some fundamental facts about semistable vector bundles on $\adicff$. 

\begin{prop}[{\cite[Th\'eor\`eme 8.2.10]{FF_curve}}]\label{classification of semistable bundles}
Let $\lambda$ be a rational number. 
\begin{enumerate}[label=(\arabic*)]
\item The vector bundle $\trivbundle(\lambda)$ represents the unique isomorphism class of stable vector bundles on $\adicff$ of slope $\lambda$. 
\smallskip

\item Every semistable vector bundle of slope $\lambda$ is isomorphic to $\trivbundle(\lambda)^{\oplus n}$ for some $n$. 
\end{enumerate}
\end{prop}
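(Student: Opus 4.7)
\emph{Plan.} The proposition is a foundational classical result; I would prove it by a Harder--Narasimhan style induction, with cohomology of the basic bundles $\trivbundle(\mu)$ playing the role of Riemann--Roch. The starting point is to pin down the slope and cohomology of $\trivbundle(\mu)$. Writing $\lambda = r/s$ in lowest terms, Definition~\ref{o-r-over-s} gives $\rk(\trivbundle(\lambda)) = s$, and a direct determinant computation from the defining Frobenius yields $\det\trivbundle(\lambda) \cong \trivbundle(r)$, so $\mu(\trivbundle(\lambda)) = \lambda$. Via the identification $H^0(\adicff, \trivbundle(n)) \cong B^{\varphi_q = \uniformizer^n}$ and Fontaine's analysis of the relevant period ring, one then establishes
\[H^0(\adicff, \trivbundle(\mu)) \neq 0 \iff \mu \geq 0, \qquad H^1(\adicff, \trivbundle(\mu)) = 0 \iff \mu \geq 0.\]

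For (1), I would first address stability: suppose $\Wcal \subsetneq \trivbundle(\lambda)$ is a non-zero subbundle, and pass to the first step of its HN filtration to assume $\Wcal$ is semistable of slope $\mu(\Wcal) = r'/s'$ in lowest terms. Assuming toward a contradiction $\mu(\Wcal) \geq \lambda$, I would exploit the explicit $\varphi$-module description of $\trivbundle(\lambda)$ on $\Ycal_{\finext,\algclosedperfdfield}$: any such subbundle lifts to a $\varphi$-equivariant subbundle, and a denominator/rank calculation (using $s' \leq s$ and that $\lambda = r/s$ is in lowest terms) forces $\mu(\Wcal) = \lambda$ and $\rk\Wcal = s$, i.e.\ $\Wcal = \trivbundle(\lambda)$, contradicting properness. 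For uniqueness, let $\Vcal$ be stable of slope $\lambda$; ampleness of $\trivbundle(1)$ (so $H^0(\adicff, \Vcal \otimes \trivbundle(n)) \neq 0$ for $n \gg 0$) combined with a slope-bound argument produces a non-zero map $\varphi : \trivbundle(\lambda) \to \Vcal$. Stability of the source forces $\varphi$ injective, and the saturation of its image is a subbundle of $\Vcal$ of slope $\geq \lambda$, which by stability of $\Vcal$ must equal $\Vcal$; comparing ranks then gives $\rk\Vcal = s$, so $\varphi$ is an isomorphism.

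For (2), I would induct on rank. Given $\Vcal$ semistable of slope $\lambda$ and rank $ns$ with $n \geq 2$, the argument just given produces a saturated subbundle $\trivbundle(\lambda) \hookrightarrow \Vcal$ whose quotient is semistable of slope $\lambda$ and rank $(n-1)s$, so $\cong \trivbundle(\lambda)^{\oplus(n-1)}$ by induction. The extension class lies in
\[\Ext^1(\trivbundle(\lambda)^{\oplus(n-1)}, \trivbundle(\lambda)) = H^1(\adicff, \Hom(\trivbundle(\lambda), \trivbundle(\lambda)))^{\oplus(n-1)},\]
and I would show it vanishes by identifying $\Hom(\trivbundle(\lambda), \trivbundle(\lambda)) \cong \trivbundle^{\oplus s^2}$: this is a slope-zero semistable bundle of rank $s^2$ whose $H^0$ is the central division algebra $D_\lambda$ over $\finext$ of $\finext$-dimension $s^2$, matching the rank, which together with trivial determinant pins down the bundle. \emph{Main obstacle.} The three delicate technical ingredients are the period-ring cohomology computation of the first paragraph, the denominator-bounded stability argument for $\trivbundle(\lambda)$, and the identification $\Hom(\trivbundle(\lambda), \trivbundle(\lambda)) \cong \trivbundle^{\oplus s^2}$ needed for the splitting; all three are genuine theorems of Fargues--Fontaine and together constitute the technical core of the classification, being what prevents this result from reducing to a soft HN-style formality.
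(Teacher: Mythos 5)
The paper offers no proof of this proposition: it is cited outright from Fargues--Fontaine \cite{FF08} (with Kedlaya \cite{Ked08} as an alternative reference), and the remark immediately following it flags it as the ``technical crux'' of the entire classification, deliberately treated as a black box for the rest of the paper. So there is no in-paper argument to compare against; your sketch is attempting to reconstruct the cited external result.

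Your sketch correctly identifies the overall structure and rightly locates the hard inputs, but two of the intermediate steps do not run as written. For the stability of $\trivbundle(\lambda)$, lifting a subbundle $\Wcal$ to a $\varphi$-equivariant subbundle of $\trivbundle_{\Ycal_{\finext,\algclosedperfdfield}}^{\oplus s}$ gives you its rank but says nothing about its degree, so the ``denominator/rank calculation'' has no leverage: nothing in the denominator $s'$ of $\mu(\Wcal)$ forces $s'=s$ or $\mu(\Wcal)=\lambda$. The published proofs handle this step either via pushforward along the degree-$s$ unramified cover of the curve together with the fundamental exact sequence (Fargues--Fontaine) or via slope filtrations of $\varphi$-modules over Robba rings (Kedlaya); neither is a denominator count. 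For uniqueness, ampleness of $\trivbundle(1)$ only gives nonzero maps $\trivbundle(-n) \to \Vcal$ for $n \gg 0$, not a map $\trivbundle(\lambda) \to \Vcal$; producing the latter amounts to showing $H^0(\adicff, \trivbundle(\lambda)^\vee \otimes \Vcal) \neq 0$, i.e.\ that a semistable slope-zero bundle has a section, which is essentially part (2) of the proposition being proved. This circularity is exactly why the published proofs route through the Dieudonné--Manin classification of isocrystals or the Robba-ring slope theory rather than running a soft HN-style bootstrap. Finally, the identification $\Hom(\trivbundle(\lambda), \trivbundle(\lambda)) \simeq \trivbundle^{\oplus s^2}$ is best obtained by the direct tensor computation of Lemma~\ref{basic properties of stable bundles}; ``pinning down the bundle'' from its $H^0$ and determinant presupposes the classification and is another circle. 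Your closing paragraph correctly names the technical pillars, but the stability and uniqueness steps are not reducible to the items you list; they are the genuinely hard theorems of \cite{FF08}.
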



\begin{lemma}\label{basic properties of stable bundles}
Let $r$ and $s$ be relatively prime integers with $s>0$. 
\begin{enumerate}[label=(\arabic*)]
\item\label{rank and degree of stable bundles} The bundle $\trivbundle(r/s)$ has rank $s$, degree $r$, and slope $r/s$.
\smallskip

\item\label{tensor product of stable bundles} For any relatively prime integers $r'$ and $s'$ with $s'>0$, we have
\[
\trivbundle\left(\frac rs\right)\otimes\trivbundle\left(\frac{r'}{s'}\right)\simeq\trivbundle\left(\frac rs+\frac{r'}{s'}\right)^{\dsm\gcd(s s',rs'+r's)}.
\]
In particular, the bundle $\trivbundle(r/s)\otimes\trivbundle(r'/s')$ has rank $ss'$, degree $rs'+r's$, and slope $r/s+r'/s'$.
\smallskip

\item\label{dual of stable bundles} $\trivbundle(r/s)^\vee \simeq \trivbundle(-r/s)$. 
\end{enumerate}
\end{lemma}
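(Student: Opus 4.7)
The strategy is to reduce all three parts to a rank-and-slope bookkeeping exercise, using Proposition~\ref{classification of semistable bundles}(2) to promote rank-and-slope data to an actual isomorphism. The two foundational inputs I will need beyond what is already in the excerpt are (a) tensor products of semistable bundles on $\schff$ are semistable, and (b) duals of semistable bundles on $\schff$ are semistable. Both are standard consequences of the main classification theorem and I would simply cite them from \cite{FF08}.

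For part~\ref{rank and degree of stable bundles}, the rank $s$ is immediate from Definition~\ref{o-r-over-s}. By Proposition~\ref{classification of semistable bundles}(1), $\trivbundle(r/s)$ is stable of slope exactly $r/s$, so its degree is $r$. For part~\ref{tensor product of stable bundles}, I would first record the standard identities $\rk(\Vcal \ten \Wcal) = \rk(\Vcal)\rk(\Wcal)$ and $\deg(\Vcal \ten \Wcal) = \rk(\Vcal)\deg(\Wcal) + \rk(\Wcal)\deg(\Vcal)$, so that $\trivbundle(r/s) \ten \trivbundle(r'/s')$ has rank $ss'$ and slope $r/s + r'/s'$. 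Combining input (a) with Proposition~\ref{classification of semistable bundles}(2) then forces an isomorphism
\[
\trivbundle(r/s) \ten \trivbundle(r'/s') \simeq \trivbundle\!\left(\tfrac{r}{s} + \tfrac{r'}{s'}\right)^{\dsm n}
\]
for some positive integer $n$. Writing $r/s + r'/s' = (rs'+r's)/(ss')$ and setting $d = \gcd(ss', rs'+r's)$, the lowest-terms representation together with part~\ref{rank and degree of stable bundles} gives $\rk \trivbundle(r/s+r'/s') = ss'/d$, so comparing total ranks yields $n = d$.

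Part~\ref{dual of stable bundles} is entirely analogous: the dual $\trivbundle(r/s)^\vee$ has rank $s$ and, by the standard sign-flip of degree under duality, slope $-r/s$. Input (b) combined with Proposition~\ref{classification of semistable bundles}(2) realizes it as $\trivbundle(-r/s)^{\dsm n}$, and since $\gcd(-r, s) = 1$, part~\ref{rank and degree of stable bundles} gives $\rk \trivbundle(-r/s) = s$, forcing $n = 1$. The only non-formal steps anywhere in the argument are inputs (a) and (b); everything else is pure arithmetic of ranks and slopes, so this is the one point at which I would defer to the foundational literature rather than redo the work.
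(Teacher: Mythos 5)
Your proof is correct, but it takes a genuinely different route from the paper's. The paper's proof of this lemma is a one-line ``straightforward to check using Definition~\ref{o-r-over-s}'': it intends a direct computation with the explicit $\phi$-equivariant module underlying $\trivbundle(r/s)$ on $\Ycal_{\finext,\algclosedperfdfield}$. Concretely, one takes the rank-$ss'$ trivial bundle with the tensor-product Frobenius structure, picks out cyclic blocks under $\phi$, and checks that each block is a copy of the shift matrix defining $\trivbundle(r/s + r'/s')$; the count of blocks is exactly $\gcd(ss', rs' + r's)$. The same unwinding handles the dual by inverting and transposing the shift matrix. Your proof replaces this computation with a softer argument: compute rank and slope formally, then cite from \cite{FF08} that tensor products and duals of semistable bundles on $\schff$ are semistable, and let Proposition~\ref{classification of semistable bundles}(2) do the rest. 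This is logically sound — the classification proposition is a black box cited earlier in the paper, so there is no circularity — and it is arguably cleaner, since it never touches the matrix description again after part~(1). The trade-off is that you import two nontrivial facts (semistability of tensor products and of duals) that the paper does not record and whose proofs in \cite{FF08} are not much lighter than the direct Witt-vector computation; the paper's approach stays entirely inside Definition~\ref{o-r-over-s} and is self-contained. Both are valid; yours is a good illustration of how much the classification theorem trivializes downstream bookkeeping, while the paper's is the more elementary route given what has already been set up.
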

\begin{proof}
All statements are straightforward to check using Definition \ref{o-r-over-s}. 
\end{proof}

\begin{theorem}[{\cite[Proposition 8.2.3]{FF_curve}},{\cite[Proposition 4.1.3]{Kedlaya_slopefiltrations_revisited}}]\label{cohomological vanishing of stable bundles}
We have the following cohomological computations:
\begin{enumerate}[label=(\arabic*)]
\item $H^0(\adicff, \trivbundle(\lambda)) = 0$ if and only if $\lambda < 0$. 
\smallskip

\item $H^1(\adicff, \trivbundle(\lambda)) = 0$ if and only if $\lambda \geq 0$. 
\end{enumerate}
\end{theorem}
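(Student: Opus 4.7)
The plan is to reduce everything to cohomology computations on the line bundles $\trivbundle(n)$ and then handle the line-bundle case by direct analysis on the Fontaine period ring $B := H^0(\Ycal_{\finext, \algclosedperfdfield}, \Ocal)$. Since $\Ycal_{\finext, \algclosedperfdfield}$ is affinoid, the Cartan--Leray spectral sequence for the $\phi$-torsor $\Ycal_{\finext, \algclosedperfdfield} \to \adicff$ identifies, for each integer $n$,
\begin{equation*}
H^0(\adicff, \trivbundle(n)) = B^{\varphi_q = \uniformizer^n}, \qquad H^1(\adicff, \trivbundle(n)) = \coker\bigl(\varphi_q - \uniformizer^n : B \to B\bigr).
\end{equation*}
For a non-integer slope $\lambda = r/s$ in lowest terms with $s > 0$, unwinding Definition \ref{o-r-over-s} shows that a $\phi$-equivariant section of $\trivbundle_{\Ycal_{\finext, \algclosedperfdfield}}^{\oplus s}$ is determined by its first coordinate $x \in B$, subject to $\varphi_q^s(x) = \uniformizer^r x$. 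Since $\varphi_q^s$ is the Frobenius of the unramified extension $\finext_s/\finext$ of degree $s$, this identifies the cohomology of $\trivbundle_\finext(r/s)$ on $\adicff_{\finext, \algclosedperfdfield}$ with the cohomology of the line bundle $\trivbundle_{\finext_s}(r)$ on $\adicff_{\finext_s, \algclosedperfdfield}$. This reduces both statements to the case of integer slopes.

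For the vanishing direction of (1), after the reduction I would observe that a nonzero global section of the line bundle $\trivbundle(n)$ produces an effective divisor of degree $n$, which forces $n \geq 0$ by the completeness of $\schff$ (Proposition \ref{completeness of FF curve}). Alternatively, one may give a uniform argument without the reduction: for any nonzero $s \in H^0(\trivbundle(\lambda))$, the saturation of the image of the corresponding map $\trivbundle \to \trivbundle(\lambda)$ is a line subbundle of $\trivbundle(\lambda)$ whose degree is at least $\deg(\trivbundle) = 0$, and stability of $\trivbundle(\lambda)$ from Proposition \ref{classification of semistable bundles} then forces $\lambda \geq 0$.

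For the non-vanishing direction of (1), the case $\lambda = 0$ is witnessed by the constant $1 \in B^{\varphi_q = 1}$; for integer $\lambda = n \geq 1$, one exhibits a nonzero element of $B^{\varphi_q = \uniformizer^n}$ directly via Fontaine's period-ring machinery. Concretely, a Lubin--Tate-style period element $t \in B$ satisfying $\varphi_q(t) = \uniformizer\, t$ gives a nonzero class in $B^{\varphi_q = \uniformizer}$, and its $n$-th power handles each $n \geq 1$.

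For statement (2), after the reduction to integer slopes the remaining task is to prove that $\varphi_q - \uniformizer^n : B \to B$ is surjective for every $n \geq 0$. My plan is to invoke Fontaine's fundamental exact sequence, which in this setting provides
\begin{equation*}
0 \to \finext \to B^{\varphi_q = \uniformizer^n} \to B_{\mathrm{dR}}/\mathrm{Fil}^n B_{\mathrm{dR}} \to 0 \qquad (n \geq 0),
\end{equation*}
combined with the closedness of the image of $\varphi_q - \uniformizer^n$ on the Fr\'echet algebra $B$, to conclude surjectivity. This surjectivity is the principal obstacle in the whole argument: it is the genuinely deep input from $p$-adic Hodge theory, whereas the unramified descent to integer slopes and the semistability-based vanishing in (1) are essentially formal consequences of results already recalled earlier in the excerpt.
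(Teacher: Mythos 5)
The paper does not prove Theorem~\ref{cohomological vanishing of stable bundles}: it is quoted as an input from \cite{FF08} and \cite{Ked08}, so there is no in-paper argument to compare against. On the merits of your proposal: the reduction to integer slopes via unramified pushforward and the identification of $H^i(\adicff, \trivbundle(n))$ with the kernel/cokernel of $\varphi_q - \uniformizer^n$ on $B$ are correct in outline, but note that $\Ycal_{\finext,\algclosedperfdfield}$ is \emph{not} affinoid---it is a rising union of rational affinoid subdomains, i.e.\ quasi-Stein---so the vanishing of its higher coherent cohomology must be cited from Kiehl/Kedlaya--Liu, not from affinoidness. Also be aware that your alternative stability argument for the vanishing half of (1) would be circular in a foundational treatment: the stability of $\trivbundle(\lambda)$ asserted in Proposition~\ref{classification of semistable bundles} is itself proved using exactly the cohomological computations at issue, so only the degree/completeness argument is a genuine proof of that direction.

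The real gap is in your treatment of (2). The fundamental exact sequence you invoke controls the \emph{kernel} $B^{\varphi_q = \uniformizer^n}$ of $\varphi_q - \uniformizer^n$; it says nothing directly about the cokernel, which is what $H^1$ is. Feeding it into the long exact cohomology sequence of the evident short exact sequence relating $\trivbundle(n-1)$, $\trivbundle(n)$, and a skyscraper sheaf, the most one extracts is that the transition maps $H^1(\adicff, \trivbundle(n-1)) \to H^1(\adicff, \trivbundle(n))$ are isomorphisms for every $n \geq 1$; this transports the vanishing between different values of $n$ but never produces it. Nor does ``closedness of the image of $\varphi_q - \uniformizer^n$ on the Fr\'echet algebra $B$'' finish the job on its own, since a proper closed subspace is still proper: one additionally needs \emph{density} of the image, which is precisely the analytic content you have not supplied. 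To finish, one should either combine the chain of isomorphisms with the Kedlaya--Liu finiteness recalled in \S\ref{background} (for $n \gg 0$ one has $H^1(\adicff, \trivbundle(n)) = 0$, hence for all $n \geq 0$), or prove surjectivity of $\varphi_q - 1$ on $B$ directly by the Newton-polygon/primitivity analysis of Fargues--Fontaine. In the standard development the fundamental exact sequence is a \emph{consequence} of this $H^1$-vanishing, not a route to it.
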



It turns out that every vector bundle on $\adicff$ admits a direct sum decomposition into stable bundles, as stated in the following theorem:
\begin{theorem}[{\cite[Th\'eor\`eme 8.2.10]{FF_curve}}]\label{existence of HN decomp}
Every vector bundle $\Vcal$ on $\adicff$ admits a unique filtration 
\begin{equation}\label{HN filtration}
0 = \Vcal_0 \subset \Vcal_1 \subset \dotsb \subset \Vcal_l = \Vcal
\end{equation}
such that the successive quotients $\Vcal_i/\Vcal_{i-1}$ are semistable vector bundles with
\[\mu(\Vcal_1/\Vcal_0) > \mu(\Vcal_2/\Vcal_1) > \cdots > \mu(\Vcal_l/\Vcal_{l-1}).\]
Moreover, the filtration \eqref{HN filtration} splits into a direct sum decomposition
\begin{equation}\label{HN decomposition}
\Vcal \simeq \bigoplus_{i=1}^l \trivbundle(\lambda_i)^{\oplus m_i}
\end{equation}
where $\lambda_i = \mu(\Vcal_i/\Vcal_{i-1})$. 
\end{theorem}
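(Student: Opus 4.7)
The theorem breaks naturally into two parts: (a) existence and uniqueness of the filtration \eqref{HN filtration}, and (b) its splitting into the direct sum \eqref{HN decomposition}. My plan is to treat these separately, using Proposition \ref{classification of semistable bundles} and Theorem \ref{cohomological vanishing of stable bundles} as the principal inputs.

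For part (a), I would invoke the standard Harder-Narasimhan formalism: define $\Vcal_1 \subset \Vcal$ to be a subbundle of maximal slope, and among those of maximal rank; verify that such a $\Vcal_1$ is automatically semistable; and then iterate the construction on $\Vcal/\Vcal_1$. The nontrivial prerequisite is boundedness, namely that the slope function on subbundles of $\Vcal$ is bounded above so that a maximal slope subbundle exists at all. To establish boundedness I would reduce to the case of line subbundles by passing to $\bigwedge^r \Vcal$ and observing that $\det \Wcal \hookrightarrow \bigwedge^r \Vcal$ for any rank-$r$ subbundle $\Wcal$; then invoke the standard ampleness properties of $\trivbundle(1)$ on $\adicff$ (already recorded in the excerpt) to bound the degrees of line subbundles of any fixed bundle. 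Uniqueness of the filtration follows by the classical argument that two distinct competing maximal-slope subbundles would generate a semistable subbundle of the same slope but strictly larger rank, contradicting the maximality of $\Vcal_1$, and similarly for the successive steps.

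For part (b), I plan to induct on the length $l$ of the filtration by splicing off the lowest-slope piece. Assuming inductively that $\Vcal_{l-1} \simeq \bigoplus_{i<l} \trivbundle(\lambda_i)^{\oplus m_i}$ and applying Proposition \ref{classification of semistable bundles} to identify $\Vcal_l/\Vcal_{l-1} \simeq \trivbundle(\lambda_l)^{\oplus m_l}$, splitting the corresponding short exact sequence reduces to proving
\[
\Ext^1\bigl(\trivbundle(\lambda_l),\, \trivbundle(\lambda_i)\bigr) \;=\; H^1\bigl(\adicff,\, \trivbundle(\lambda_l)^\vee \otimes \trivbundle(\lambda_i)\bigr) \;=\; 0
\]
for every $i < l$. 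Using parts (2) and (3) of Lemma \ref{basic properties of stable bundles} to decompose $\trivbundle(-\lambda_l) \otimes \trivbundle(\lambda_i)$ as a direct sum of copies of $\trivbundle(\lambda_i - \lambda_l)$ and noting that $\lambda_i - \lambda_l > 0$, the vanishing is immediate from Theorem \ref{cohomological vanishing of stable bundles}. The main obstacle is therefore entirely concentrated in the boundedness step of part (a); once that is in hand, both the HN formalism and the $\Ext^1$ vanishing are essentially formal consequences of the inputs already established.
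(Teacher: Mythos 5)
Your proposal matches the paper's proof essentially exactly. For the splitting of the filtration (part (b)), the paper inducts on the length $l$, identifies $\Vcal_l/\Vcal_{l-1} \simeq \trivbundle(\lambda_l)^{\oplus m_l}$ via Proposition \ref{classification of semistable bundles}, and reduces to the vanishing $\Ext^1(\trivbundle(\lambda_l), \trivbundle(\lambda_i)) \simeq H^1(\adicff, \trivbundle(\lambda_i - \lambda_l)^{\oplus n_i}) = 0$ from Theorem \ref{cohomological vanishing of stable bundles} with $\lambda_i - \lambda_l > 0$ --- exactly your argument. For existence and uniqueness of the filtration (part (a)), the paper simply cites \cite[\S3.4]{Ked17} as ``a standard consequence of slope formalism''; your sketch of the maximal-destabilizing-subbundle construction, boundedness via $\det\Wcal \hookrightarrow \bigwedge^r \Vcal$ and global generation of twists, and the uniqueness argument is a correct unwinding of what that reference contains, so you have simply filled in what the paper outsources.
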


\begin{remark}
The existence and uniqueness of the filtration \eqref{HN filtration} is a formal consequence of Propositions \ref{why FF curve is a curve} and \ref{completeness of FF curve}, as noted by Kedlaya \cite[\S3.4]{Kedlaya_arizona}. The existence of the direct sum decomposition \ref{HN decomposition} then follows by Proposition \ref{classification of semistable bundles} and Theorem \ref{cohomological vanishing of stable bundles}. We note that the recent work of Fargues-Scholze \cite{FS_geomLL} gives a new proof of Proposition \ref{classification of semistable bundles} (and thus Theorem \ref{existence of HN decomp}) which conceptualize the original proof of Fargues-Fontaine \cite{FF_curve}.
\end{remark}

\begin{defn}\label{def of HN filtration, decomposition and polygon}
Let $\Vcal$ be a vector bundle on $\adicff$. 
\begin{enumerate}[label = (\arabic*)]
\item We refer to the filtration \eqref{HN filtration}
 in Theorem \ref{existence of HN decomp} 
as the \emph{Harder-Narasimhan (HN) filtration} of $\Vcal$. 
\smallskip

\item We refer to the decomposition \eqref{HN decomposition} 
in Theorem \ref{existence of HN decomp} 
as a \emph{Harder-Narasimhan (HN) decomposition} of $\Vcal$. 
\smallskip

\item We define the \emph{Harder-Narasimhan (HN) polygon} of $\Vcal$ as the upper convex hull of the points $(\rk(\Vcal_i), \deg(\Vcal_i))$ where $\Vcal_i$'s are subbundles in the HN filtration of $\Vcal$. 
\smallskip

\item We refer to the slopes of $\HN(\Vcal)$ as the \emph{Harder-Narasimhan (HN) slopes} of $\Vcal$, or simply the \emph{slopes} of $\Vcal$. These are precisely the numbers $\lambda_i = \mu(\Vcal_i/\Vcal_{i-1})$ in Theorem \ref{existence of HN decomp}. 
\end{enumerate}
\end{defn}

We can restate Theorem \ref{existence of HN decomp} in terms of HN polygons as follows:

\begin{cor}\label{classification by HN polygon}
Every vector bundle $\Vcal$ on $\adicff$ is determined up to isomorphism by its HN polygon $\HN(\Vcal)$. 
\end{cor}

Let us now introduce some notations that we will frequently use. 

\begin{defn}\label{slope_leq_part}
Let $\Vcal$ be a vector bundle on $\adicff$ with Harder-Narasimhan filtration
\[0 = \Vcal_0 \subset \Vcal_1 \subset \ldots \subset \Vcal_m = \Vcal.\] 
\begin{enumerate}[label=(\arabic*)]
\item We write $\mumax(\Vcal)$ (resp. $\mumin(\Vcal)$) for the maximum (resp. minimum) slope of $\Vcal$. 
\smallskip

\item For every $\mu \in \Q$, we define $\Vcal^{\geq \mu}$ (resp. $\Vcal^{> \mu}$) to be the subbundle of $\Vcal$ given by $\Vcal_i$ for the largest value of $i$ such that $\mu(\Vcal_i/\Vcal_{i-1}) \geq \mu$ (resp. such that $\mu(\Vcal_i/\Vcal_{i-1}) > \mu$). We also define $\Vcal^{<\mu} := \Vcal / \Vcal^{\geq \mu}$ and $\Vcal^{\leq \mu} := \Vcal / \Vcal^{> \mu}$.
\end{enumerate}
\end{defn}

\begin{lemma}\label{slope_leq_part via HN decomp}
Let $\Vcal$ be a vector bundle on $\adicff$ with Harder-Narasimhan decomposition
\[\Vcal \simeq \bigoplus_{i=1}^l \trivbundle(\lambda_i)^{\oplus m_i}.\]
Then we have the following identifications:
\[\Vcal^{\geq \mu} \simeq \bigoplus_{\lambda_i \geq \mu}\trivbundle(\lambda_i)^{\oplus m_i} \quad\quad \text{ and } \quad\quad \Vcal^{> \mu} \simeq \bigoplus_{\lambda_i>\mu}\trivbundle(\lambda_i)^{\oplus m_i},\]
\[\Vcal^{\leq \mu} \simeq \bigoplus_{\lambda_i \leq \mu}\trivbundle(\lambda_i)^{\oplus m_i} \quad\quad \text{ and } \quad\quad  \Vcal^{< \mu} \simeq \bigoplus_{\lambda_i<\mu}\trivbundle(\lambda_i)^{\oplus m_i}.\]
\end{lemma}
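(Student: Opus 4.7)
The plan is to identify each $\Vcal_k$ in the HN filtration with the partial direct sum $\bigoplus_{i \leq k}\trivbundle(\lambda_i)^{\oplus m_i}$ from the HN decomposition; once this is established, the lemma reduces to unwinding Definition \ref{slope_leq_part}. By Theorem \ref{existence of HN decomp} the slopes $\lambda_i = \mu(\Vcal_i/\Vcal_{i-1})$ are strictly decreasing in $i$, and the $\lambda_i$'s appearing in the HN decomposition are the same ones. I will first show, by induction on $k$, that $\Vcal_k \simeq \bigoplus_{i \leq k} \trivbundle(\lambda_i)^{\oplus m_i}$ for every $k$. The induction step reruns the argument from the proof of Theorem \ref{existence of HN decomp}, now applied to the truncated HN filtration $0 \subset \Vcal_1 \subset \cdots \subset \Vcal_k$ of $\Vcal_k$: since $\Vcal_k/\Vcal_{k-1}$ is semistable of slope $\lambda_k$, Proposition \ref{classification of semistable bundles} gives $\Vcal_k/\Vcal_{k-1} \simeq \trivbundle(\lambda_k)^{\oplus m_k}$; combining Lemma \ref{basic properties of stable bundles} with Theorem \ref{cohomological vanishing of stable bundles} then yields $\Ext^1(\trivbundle(\lambda_k), \trivbundle(\lambda_i)) = 0$ for $i < k$, so the inductive decomposition of $\Vcal_{k-1}$ lifts to the claimed decomposition of $\Vcal_k$.

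Granted this identification, the statements for $\Vcal^{\geq \mu}$ and $\Vcal^{>\mu}$ are immediate: if $k$ is the largest index with $\lambda_k \geq \mu$ (respectively $\lambda_k > \mu$), then Definition \ref{slope_leq_part} gives $\Vcal^{\geq \mu} = \Vcal_k$, and strict decrease of the $\lambda_i$'s identifies $\{i \leq k\}$ with $\{i : \lambda_i \geq \mu\}$ (respectively $\{i : \lambda_i > \mu\}$); the edge cases $\mu > \mumax(\Vcal)$ and $\mu \leq \mumin(\Vcal)$ correspond to the empty and full direct sums on the right-hand side. For the quotients, the above identification realizes the inclusion $\Vcal^{>\mu} \hookrightarrow \Vcal$ as the inclusion of the direct summand $\bigoplus_{\lambda_i > \mu} \trivbundle(\lambda_i)^{\oplus m_i}$ inside the HN decomposition, so $\Vcal^{\leq \mu} = \Vcal/\Vcal^{>\mu}$ is naturally isomorphic to the complementary summand $\bigoplus_{\lambda_i \leq \mu} \trivbundle(\lambda_i)^{\oplus m_i}$; the statement for $\Vcal^{<\mu}$ is entirely analogous. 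There is no real obstacle here: the content of the lemma is essentially bookkeeping around the splitting of the HN filtration already established in Theorem \ref{existence of HN decomp}, and the only genuine verification is the inductive matching of each $\Vcal_k$ with its corresponding partial direct summand.
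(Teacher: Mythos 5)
Your proof is correct and follows essentially the same route as the paper, which simply records the lemma as an immediate consequence of Definition \ref{slope_leq_part}; you are filling in the compatibility of the splitting $\Vcal_k \simeq \bigoplus_{i\leq k}\trivbundle(\lambda_i)^{\oplus m_i}$ with the HN filtration, which is already implicit in the inductive proof of Theorem \ref{existence of HN decomp}, and then unwinding the definitions exactly as the paper intends.
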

\begin{proof}
This is an immediate consequence of Definition \ref{slope_leq_part}. 
\end{proof}

\begin{lemma}\label{rank and degree of dual bundle}
Given a vector bundle $\Vcal$ on $\adicff$, we have identities
\[ \rk(\Vcal) = \rk(\Vcal^\vee) \quad\text{ and }\quad \deg(\Vcal) = -\deg(\Vcal^\vee).\]
More generally, for every $\mu \in \Q$ we have identities
\[\rk(\Vcal^{\geq \mu}) = \rk((\Vcal^\vee)^{\leq-\mu}) \quad\text{ and }\quad \deg(\Vcal^{\geq \mu}) = -\deg((\Vcal^\vee)^{\leq-\mu}).\]
\end{lemma}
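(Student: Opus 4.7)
The plan is to reduce everything to the Harder–Narasimhan decomposition together with the explicit description of the dual of a stable bundle given in Lemma \ref{basic properties of stable bundles}\ref{dual of stable bundles}.

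First I would fix a Harder–Narasimhan decomposition
\[
\Vcal \simeq \bigoplus_{i=1}^{l} \trivbundle(\lambda_i)^{\oplus m_i}
\]
provided by Theorem \ref{existence of HN decomp}. Since dualization commutes with direct sums and $\trivbundle(\lambda)^\vee \simeq \trivbundle(-\lambda)$ by Lemma \ref{basic properties of stable bundles}\ref{dual of stable bundles}, this gives
\[
\Vcal^\vee \simeq \bigoplus_{i=1}^{l} \trivbundle(-\lambda_i)^{\oplus m_i}.
\]
Because $\lambda_1 > \lambda_2 > \cdots > \lambda_l$ implies $-\lambda_l > \cdots > -\lambda_1$, this is, up to reordering, the HN decomposition of $\Vcal^\vee$.

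For the first pair of identities, I would invoke Lemma \ref{basic properties of stable bundles}\ref{rank and degree of stable bundles} to note that $\trivbundle(\lambda_i)$ and $\trivbundle(-\lambda_i)$ share the same rank, while their degrees are negatives of each other. Summing the contributions over $i$ gives $\rk(\Vcal) = \rk(\Vcal^\vee)$ and $\deg(\Vcal) = -\deg(\Vcal^\vee)$.

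For the more general statement I would apply Lemma \ref{slope_leq_part via HN decomp} to obtain
\[
\Vcal^{\geq \mu} \simeq \bigoplus_{\lambda_i \geq \mu} \trivbundle(\lambda_i)^{\oplus m_i}, \qquad (\Vcal^\vee)^{\leq -\mu} \simeq \bigoplus_{-\lambda_i \leq -\mu} \trivbundle(-\lambda_i)^{\oplus m_i},
\]
and then observe that the condition $-\lambda_i \leq -\mu$ is equivalent to $\lambda_i \geq \mu$, so the second direct sum is precisely the dual of the first. Applying the already established identities $\rk(\Wcal)=\rk(\Wcal^\vee)$ and $\deg(\Wcal)=-\deg(\Wcal^\vee)$ to $\Wcal := \Vcal^{\geq \mu}$ closes the argument. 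There is essentially no serious obstacle here: the lemma is pure bookkeeping on the HN decomposition, and the only point that merits a moment of care is the matching of index sets when passing between $\Vcal^{\geq \mu}$ and $(\Vcal^\vee)^{\leq -\mu}$ under the sign flip of slopes.
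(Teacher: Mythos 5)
Your proof is correct and follows the same route as the paper: establish the stable case from Lemma \ref{basic properties of stable bundles}, pass to the general case via the HN decomposition, and then reduce the second pair of identities to the first by identifying $(\Vcal^{\geq \mu})^\vee \simeq (\Vcal^\vee)^{\leq -\mu}$ using Lemma \ref{slope_leq_part via HN decomp}. The only difference is that you spell out the index-set matching in slightly more detail, which the paper leaves implicit.
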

\begin{proof}
When $\Vcal$ is stable, the first statement is an immediate consequence of Lemma \ref{basic properties of stable bundles}. From this, we deduce the first statement for the general case using HN decomposition of $\Vcal$. The second statement then follows from the first statement since we have $(\Vcal^{\geq \mu})^\vee \simeq (\Vcal^\vee)^{\leq -\mu}$ by Lemma \ref{basic properties of stable bundles} and Lemma \ref{slope_leq_part via HN decomp}. 
\end{proof}

\begin{lemma}\label{zero hom for dominating slopes}
Given two vector bundles $\Vcal$ and $\Wcal$ on $\adicff$, we have
\begin{equation*}\label{zero hom for dominating slopes equation}
\mathrm{Hom}(\Vcal, \Wcal) = 0 \quad\quad \text{ if and only if } \quad\quad \mumin(\Vcal)>\mumax(\Wcal).
\end{equation*}
\end{lemma}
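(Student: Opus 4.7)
The plan is to reduce to the case of stable bundles via the Harder-Narasimhan decomposition (Theorem \ref{existence of HN decomp}) and then invoke the cohomological vanishing from Theorem \ref{cohomological vanishing of stable bundles}. This is a short, direct argument with no real obstacle; the only thing to be careful about is recording how tensor products, duals, and direct sums interact with $\mathrm{Hom}$ and slopes.

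First I would write
\[
\Vcal \simeq \bigoplus_{i=1}^l \trivbundle(\lambda_i)^{\oplus m_i}, \qquad \Wcal \simeq \bigoplus_{j=1}^{l'} \trivbundle(\lambda'_j)^{\oplus m'_j},
\]
with $\lambda_1 > \cdots > \lambda_l$ and $\lambda'_1 > \cdots > \lambda'_{l'}$, so that $\mumin(\Vcal) = \lambda_l$ and $\mumax(\Wcal) = \lambda'_1$. Since $\mathrm{Hom}$ distributes over finite direct sums,
\[
\mathrm{Hom}(\Vcal, \Wcal) \simeq \bigoplus_{i,j} \mathrm{Hom}(\trivbundle(\lambda_i), \trivbundle(\lambda'_j))^{\oplus m_i m'_j},
\]
so $\mathrm{Hom}(\Vcal, \Wcal) = 0$ if and only if each $\mathrm{Hom}(\trivbundle(\lambda_i), \trivbundle(\lambda'_j))$ vanishes.

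Next, I would identify $\mathrm{Hom}(\trivbundle(\lambda_i), \trivbundle(\lambda'_j)) \simeq H^0(\adicff, \trivbundle(\lambda_i)^\vee \otimes \trivbundle(\lambda'_j))$. By parts \ref{tensor product of stable bundles} and \ref{dual of stable bundles} of Lemma \ref{basic properties of stable bundles}, this tensor product is isomorphic to $\trivbundle(\lambda'_j - \lambda_i)^{\oplus k_{ij}}$ for some positive integer $k_{ij}$. Theorem \ref{cohomological vanishing of stable bundles} then gives
\[
\mathrm{Hom}(\trivbundle(\lambda_i), \trivbundle(\lambda'_j)) = 0 \iff \lambda'_j - \lambda_i < 0 \iff \lambda_i > \lambda'_j.
\]

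Finally, combining the two reductions, $\mathrm{Hom}(\Vcal, \Wcal) = 0$ holds if and only if $\lambda_i > \lambda'_j$ for every pair $(i,j)$, which is equivalent to $\min_i \lambda_i > \max_j \lambda'_j$, i.e., $\mumin(\Vcal) > \mumax(\Wcal)$. This completes the argument in both directions simultaneously.
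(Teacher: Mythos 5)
Your proof is correct and follows essentially the same approach as the paper: reduce to stable bundles via the HN decomposition, identify $\mathrm{Hom}(\trivbundle(\lambda),\trivbundle(\mu))$ with $H^0(\adicff,\trivbundle(\mu-\lambda)^{\oplus n})$ using Lemma \ref{basic properties of stable bundles}, and conclude from the cohomological vanishing criterion in Theorem \ref{cohomological vanishing of stable bundles}. You merely spell out the direct-sum reduction in slightly more detail than the paper, which simply asserts that the general case follows from the stable case.
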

\begin{proof}
It suffices to consider the case when both $\Vcal$ and $\Wcal$ are stable; indeed, the general case will follow from this special case using the HN decompositions of $\Vcal$ and $\Wcal$. Let us now write $\Vcal = \trivbundle(\lambda)$ and $\Wcal = \trivbundle(\mu)$ for some $\lambda, \mu \in \Q$. Then using Lemma \ref{basic properties of stable bundles} we find
\[\mathrm{Hom}(\Vcal, \Wcal) \simeq H^0(\adicff, \Vcal^\vee \otimes \Wcal) \simeq H^0(\adicff, \trivbundle(\lambda)^\vee \otimes \trivbundle(\mu)) \simeq H^0(\adicff, \trivbundle(\mu - \lambda)^{\oplus n})\]
for some $n$. Since the condition $\mumin(\Vcal)>\mumax(\Wcal)$ is equivalent to $\lambda > \mu$, the assertion follows from Theorem \ref{cohomological vanishing of stable bundles}. 
\end{proof}

\section{Moduli of bundle maps}\label{diamond_dim}

\subsection{Definitions and key properties}\label{bundlemaps}$ $

In this section we define certain moduli spaces of bundle maps and collect some of their key properties. We refer the readers to \cite[\S3.3]{Arizona_extvb} for details. 

\begin{defn}\label{def of moduli functors of bundle maps}
Let $\Ecal$ and $\Fcal$ be vector bundles on $\adicff$. We denote by $\Perfd_{/\Spa (\algclosedperfdfield)}$ the category of perfectoid spaces over $\Spa(\algclosedperfdfield)$.
\begin{enumerate}
\item For each $S \in \Perfd_{/\Spa(\algclosedperfdfield)}$, we write $\Ecal_S$ and $\Fcal_S$ for the pullbacks of $\Ecal$ and $\Fcal$ along the natural map $\adicff_S \to \adicff$.
\smallskip

\item $\Hcal^{0}(\Ecal)$ is the functor associating to $S \in \Perfd_{/\Spa(\algclosedperfdfield)}$ the set $H^{0}(\adicff_S,\Ecal_S)$.
\smallskip

\item $\Hom(\mathcal{E},\Fcal)$ is the functor
associating to $S \in \Perfd_{/\Spa(\algclosedperfdfield)}$ the set of $\trivbundle_{\adicff_S}$-module
maps $\Ecal_S\to\Fcal_S$. 
\smallskip

\item $\Surj(\Ecal, \Fcal)$ is the subfunctor of $\Hom(\Ecal,\Fcal)$ whose $S$-points parametrize surjective $\trivbundle_{\adicff_S}$-module maps $\Ecal_S \surj \Fcal_S$. 
\smallskip

\item $\Inj(\Ecal, \Fcal)$
is the subfunctor of $\Hom(\Ecal,\Fcal)$ whose $S$-points parametrize ``fiberwise-injective'' $\trivbundle_{\adicff_S}$-module
maps. Precisely, this functor parametrizes $\trivbundle_{\adicff_S}$-module
maps $\Ecal_{S}\to\Fcal_{S}$ whose pullback along the map $\adicff_{\overline{x}} \to\adicff_S$ for any geometric point $\overline{x}
\to S$
gives an injective $\trivbundle_{\adicff_{\overline{x}}}$-module map. %


\end{enumerate}
\end{defn}

\begin{remark}
The condition defining $\Inj(\Ecal, \Fcal)$ is much stronger than the condition that $\Ecal_{S}\to\Fcal_{S}$ is injective. We impose this enhanced condition to ensure that $\Inj(\Ecal, \Fcal)$ is a sheaf; indeed, the functor associating $S \in \Perfd_{/\Spa(\algclosedperfdfield)}$ to the set of injective $\trivbundle_{\adicff_S}$-module maps $\Ecal_S \in \Fcal_S$ is not even a presheaf. The fiberwise injectivity guarantees that a short exact sequence $0 \to \Ecal_S \to \Fcal_S \to \Fcal_S/\Ecal_S \to 0$ remains exact after tensoring with $\trivbundle_{\adicff_T}$ for any perfectoid space $T$ over $S$ as $\mathrm{Tor}_1(\Fcal_S/\Ecal_S)$ vanishes for all $S \in \Perfd_{/\Spa(\algclosedperfdfield)}$; however, we don't have this property without fiberwise injectivity. 
\end{remark}


Scholze's theory of diamonds in \cite{Scholze_diamonds} provides a framework for making sense of these functors as moduli spaces, thereby allowing us to study their geometric properties. 

\begin{prop}[{\cite[Proposition 3.3.2, Proposition 3.3.5 and Proposition 3.3.6]{Arizona_extvb}}]\label{moduli of bundle maps dimension formulas}
Let $\Ecal$ and $\Fcal$ be vector bundles on $\adicff$. The functors $\Hcal^0(\Ecal)$, $\Hom(\Ecal, \Fcal)$, $\Surj(\Ecal, \Fcal)$ and $\Inj(\Ecal, \Fcal)$ are all locally spatial and partially proper diamonds 
in the sense of Scholze \cite{Scholze_diamonds}. Moreover, their dimensions are given as follows:
\begin{enumerate}[label=(\arabic*)]
\item The diamond $\Hcal^0(\Ecal)$ is equidimensional of dimension $\deg(\Ecal)^\nonneg$. 

\item The diamond $\Hom(\Ecal, \Fcal)$ is equidimensional of dimension $\deg(\Ecal^\vee \otimes \Fcal)^\nonneg$. 

\item The diamonds $\Surj(\Ecal, \Fcal)$ and $\Inj(\Ecal, \Fcal)$ are either empty or equidimensional of dimension $\deg(\Ecal^\vee \otimes \Fcal)^\nonneg$. 
\end{enumerate}
\end{prop}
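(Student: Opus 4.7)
The plan is to reduce all four representability claims and the three dimension formulas to a single prototype: for a stable bundle $\trivbundle(\lambda)$, the functor $\Hcal^0(\trivbundle(\lambda))$ should be representable by a locally spatial, partially proper, equidimensional diamond over $\Spd(\algclosedperfdfield)$ of dimension $\deg(\trivbundle(\lambda))^\nonneg$. When $\lambda<0$ this is immediate from Theorem \ref{cohomological vanishing of stable bundles}, since the functor then takes every test object to a singleton. When $\lambda\geq 0$ this is exactly the input supplied by (mixed-characteristic) Banach--Colmez theory in Scholze's category of diamonds: the $\varphi$-module presentation of $\trivbundle(\lambda)$ from Definition \ref{o-r-over-s} gives a concrete description of $\Hcal^0(\trivbundle(\lambda))$ from which the diamond structure and dimension $\deg\trivbundle(\lambda)$ can be read off.

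Granting this prototype, statement (1) follows from the Harder--Narasimhan decomposition of Theorem \ref{existence of HN decomp}: writing $\Ecal\simeq\bigoplus_i \trivbundle(\lambda_i)^{\oplus m_i}$ and using that $\Hcal^0$ commutes with direct sums, we obtain
\[
\Hcal^0(\Ecal)\;\cong\;\prod_i \Hcal^0(\trivbundle(\lambda_i))^{m_i},
\]
a finite product of locally spatial, partially proper, equidimensional diamonds; summing dimensions and noting that only the nonneg-slope factors contribute produces precisely $\deg(\Ecal)^\nonneg$. Statement (2) is then immediate from the canonical identification $\Hom(\Ecal,\Fcal)\cong\Hcal^0(\Ecal^\vee\otimes\Fcal)$ recorded in the definitions.

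For (3), the plan is to show that $\Surj(\Ecal,\Fcal)$ and $\Inj(\Ecal,\Fcal)$ are \emph{open} subfunctors of $\Hom(\Ecal,\Fcal)$. Considering the universal bundle map over $\adicff_{\Hom(\Ecal,\Fcal)}$, its cokernel (respectively, the kernels of its restrictions to geometric fibers of the curve) form coherent sheaves whose supports are closed; properness of the projection $\adicff_S\to S$ for each test $S$ then forces the image of each support downstairs to be closed, which is the openness we need. Combining openness with (2), the remaining claims for $\Surj$ and $\Inj$---local spatialness, partial properness, and the dimension formula---follow from the standard diamond-theoretic principle that an open subdiamond of an equidimensional locally spatial diamond is itself locally spatial, and either empty or equidimensional of the same dimension $\deg(\Ecal^\vee\otimes\Fcal)^\nonneg$.

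The main obstacle is the Banach--Colmez prototype itself: the representability, equidimensionality, local spatialness and partial properness of $\Hcal^0(\trivbundle(\lambda))$ together with the dimension formula $\deg\trivbundle(\lambda)$ when $\lambda\geq 0$. Everything else in the argument---the HN decomposition, the tensor--hom identification, compatibility of $\Hcal^0$ with direct sums, and the openness of the surjective and fiberwise-injective loci---is formal bookkeeping once the Banach--Colmez input and the framework of \cite{Sch} are in place.
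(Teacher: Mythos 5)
This proposition is not proved in the paper---it is imported verbatim from \cite{Arizonaext}---so there is no in-text argument to compare against. Your sketch does track the cited source's approach: reduce everything to $\Hcal^0$ of a stable bundle via the Harder--Narasimhan decomposition and the identity $\Hom(\Ecal,\Fcal)\cong\Hcal^0(\Ecal^\vee\otimes\Fcal)$, handle $\lambda\geq 0$ with Banach--Colmez theory, and view $\Surj$ and $\Inj$ as open subdiamonds of $\Hom$.

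That said, several things you call ``formal bookkeeping'' are not. First, for $\lambda<0$ you invoke Theorem \ref{cohomological vanishing of stable bundles}, but that theorem concerns $H^0(\adicff,\trivbundle(\lambda))$ over the absolute curve, whereas $\Hcal^0(\trivbundle(\lambda))$ sends $S$ to $H^0(\adicff_S,\trivbundle(\lambda)_S)$; the uniform vanishing over all relative curves $\adicff_S$ is a separate statement (true, but requiring its own argument, e.g.\ via the $\varphi$-equivariant description on $\Ycal_{\finext,S}$). Second, your openness argument for $\Surj$ and $\Inj$ leans on ``properness of $\adicff_S\to S$,'' but this map is not proper in the ordinary sense; showing that the support of the universal cokernel projects to a closed subset of $|\Hom(\Ecal,\Fcal)|$ requires a more specific analysis of the relative curve than a generic properness invocation. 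Third, partial properness does not automatically pass to open subfunctors: an open subdiamond is stable under generalization but not specialization, so the valuative lifting built into the definition can fail. For $\Surj$ and $\Inj$ one must additionally observe that surjectivity and fiberwise injectivity are detected on rank-1 points---so a point of $|\Hom|$ and its specializations land simultaneously inside or outside the open locus---and this is what lets partial properness be inherited from $\Hom(\Ecal,\Fcal)$.
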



\begin{remark}
For a diamond $Y$, its dimension refers to the Krull dimension of its topological space $|Y|$. For locally spatial diamonds, this notion of dimension turns out to behave pleasantly well as their topological spaces are locally spectral. 

We also note that, by the work of Le Bras \cite{LeBras_BCspaces}, the functors $\Hcal^0(\Ecal)$ and $\Hom(\Ecal, \Fcal)$ are also Banach-Colmez spaces as defined by Colmez \cite{Colmez_BCspace}. Moreover, their dimension as a diamond is equal to their \emph{principal dimension} as a Banach-Colmez space. 
\end{remark}

\begin{prop}[{\cite[Theorem 3.3.11]{Arizona_extvb}}]\label{dimension inequality for surj maps}
Let $\Ecal$ and $\Fcal$ be vector bundles on $\adicff$ satisfying the following properties:
\begin{enumerate}[label=(\roman*)]
\item\label{existence of nonzero bundle map from E to F} There exists a nonzero bundle map $\Ecal \to \Fcal$. 

\item\label{positive codim for Hom minus surj} For any $\Qcal \subsetneq \Fcal$ which also occurs as a quotient of $\Ecal$ we have an inequality
\[ \deg(\Ecal^\vee \otimes \Qcal)^\nonneg + \deg(\Qcal^\vee \otimes \Fcal)^\nonneg < \deg(\Ecal^\vee \otimes \Fcal)^\nonneg + \deg(\Qcal^\vee \otimes \Qcal)^\nonneg.\]
\end{enumerate}
Then there exists a surjective bundle map $\Ecal \surj \Fcal$. 
\end{prop}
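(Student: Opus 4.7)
The plan is to reduce nonemptiness of $\Surj(\Ecal,\Fcal)(\algclosedperfdfield)$ to nonemptiness of the topological space $|\Surj(\Ecal,\Fcal)|$, using that $\Surj(\Ecal,\Fcal)$ is an open subfunctor of $\Hom(\Ecal,\Fcal)$ and that $\Hom(\Ecal,\Fcal) \simeq \Hcal^0(\Ecal^\vee \otimes \Fcal)$ has dense $\algclosedperfdfield$-points (with at least one nonzero $\algclosedperfdfield$-point supplied by hypothesis (i)). It then suffices to argue by comparing dimensions. Specifically, I would stratify the complement
\[
X := |\Hom(\Ecal,\Fcal)| \setminus |\Surj(\Ecal,\Fcal)|
\]
by the isomorphism class of the image sheaf: for each isomorphism class of proper subbundle $\Qcal \subsetneq \Fcal$ which also occurs as a quotient of $\Ecal$, let $|\Hom(\Ecal,\Fcal)_\Qcal| \subset |\Hom(\Ecal,\Fcal)|$ be the image of the composition-of-maps map
\[
\mathrm{c}_\Qcal \colon \Surj(\Ecal,\Qcal) \times_{\Spd\,\algclosedperfdfield} \Inj(\Qcal,\Fcal) \longrightarrow \Hom(\Ecal,\Fcal), \qquad (s,i) \mapsto i \circ s.
\]
Then $X = \bigcup_\Qcal |\Hom(\Ecal,\Fcal)_\Qcal|$. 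Because $\mathrm{c}_\Qcal$ is a map of partially proper locally spatial diamonds, each stratum $|\Hom(\Ecal,\Fcal)_\Qcal|$ is stable under both generalization and specialization in $|\Hom(\Ecal,\Fcal)|$, so any chain of specializations lying inside $X$ is contained in a single stratum.

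Next I would compute $\dim |\Hom(\Ecal,\Fcal)_\Qcal|$ using Proposition \ref{moduli of bundle maps dimension formulas} and diamond dimension theory. By that proposition the source of $\mathrm{c}_\Qcal$ is equidimensional of dimension $\deg(\Ecal^\vee \otimes \Qcal)^\nonneg + \deg(\Qcal^\vee \otimes \Fcal)^\nonneg$. Every geometric fiber of $\mathrm{c}_\Qcal$ is, after passage to a pro-finite \'etale cover, a torsor under the automorphism diamond $\mathcal{A}\mathrm{ut}(\Qcal)$, which is an open subdiamond of $\Hom(\Qcal,\Qcal)$ and hence equidimensional of dimension $\deg(\Qcal^\vee \otimes \Qcal)^\nonneg$. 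Applying the additivity formula $\dim |\mathrm{source}| = \dim |\mathrm{image}| + \dim \mathrm{fiber}$ for maps of locally spatial diamonds with equidimensional fibers then gives
\[
\dim |\Hom(\Ecal,\Fcal)_\Qcal| = \deg(\Ecal^\vee \otimes \Qcal)^\nonneg + \deg(\Qcal^\vee \otimes \Fcal)^\nonneg - \deg(\Qcal^\vee \otimes \Qcal)^\nonneg.
\]
Hypothesis (ii) then reads as exactly the strict inequality
\[
\dim |\Hom(\Ecal,\Fcal)_\Qcal| < \deg(\Ecal^\vee \otimes \Fcal)^\nonneg = \dim |\Hom(\Ecal,\Fcal)|
\]
for every relevant $\Qcal$, and combined with the stratumwise chain property this will force $\dim X < \dim |\Hom(\Ecal,\Fcal)|$, so $|\Surj(\Ecal,\Fcal)|$ must be nonempty.

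The main obstacle is the fiber-dimension calculation for $\mathrm{c}_\Qcal$. Carrying it out cleanly requires (a) checking that the natural $\mathcal{A}\mathrm{ut}(\Qcal)$-action on $\Surj(\Ecal,\Qcal) \times_{\Spd\,\algclosedperfdfield} \Inj(\Qcal,\Fcal)$ makes each nonempty geometric fiber of $\mathrm{c}_\Qcal$ into an $\mathcal{A}\mathrm{ut}(\Qcal)$-torsor, and (b) a dimension-additivity result of the form $\dim|X| = \dim \mathrm{im}(|X|\to|Y|) + d$ for maps between partially proper locally spatial diamonds whose fibers have fixed dimension $d$, both of which rest on Scholze's theory of diamonds. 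A small subtlety to address is that although the range of $\Qcal$ to consider is a priori infinite, the constraints that $\Qcal$ be a quotient of $\Ecal$ and a subbundle of $\Fcal$ bound $\rank(\Qcal)$ and the HN slopes of $\Qcal$, leaving only finitely many HN polygons; hence the strict inequality in (ii) indeed yields a uniform strict bound $\sup_\Qcal \dim |\Hom(\Ecal,\Fcal)_\Qcal| < \dim|\Hom(\Ecal,\Fcal)|$, completing the argument.
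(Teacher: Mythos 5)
Your proposal follows essentially the same approach as the paper's own sketch: stratify $|\Hom(\Ecal,\Fcal)| \setminus |\Surj(\Ecal,\Fcal)|$ by the isomorphism class of the image $\Qcal$, compute each stratum's dimension via the fiber torsor under $\mathcal{A}\mathrm{ut}(\Qcal)$, and invoke hypothesis (ii) together with the equidimensionality of $|\Hom(\Ecal,\Fcal)|$ to force $|\Surj(\Ecal,\Fcal)| \neq \emptyset$. You merely make explicit some of the diamond-theoretic ingredients (density of $\algclosedperfdfield$-points in the open subfunctor, partial properness giving stability under generalization and specialization, the additivity of fiber dimension) that the paper cites from \cite{Arizonaext} without restating.
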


\begin{remark}
Let us provide an interpretation of the properties \ref{existence of nonzero bundle map from E to F} and \ref{positive codim for Hom minus surj} in line with Proposition \ref{moduli of bundle maps dimension formulas}. Let $S$ be the set of isomorphism classes of subsheaves $\Qcal \subsetneq \Fcal$ which also occur as a quotient of $\Ecal$. For each $\Qcal \in S$, composition of bundle maps induces a natural map of diamonds 
\[\Surj(\Ecal,\Qcal) \times_{\Spd\,\algclosedperfdfield} \Inj(\Qcal,\Fcal) \to \Hom(\Ecal,\Fcal).\] 
Let us define $|\Hom(\Ecal, \Fcal)_{\Qcal}| \subset |\Hom(\Ecal, \Fcal)|$ to be the image of the induced map on topological spaces; indeed, $|\Hom(\Ecal, \Fcal)_{\Qcal}|$ is the underlying topological space of a subdiamond $\Hom(\Ecal, \Fcal)_{\Qcal}$ of $\Hom(\Ecal, \Fcal)$, which essentially parametrizes bundle maps $\Ecal \to \Fcal$ with image isomorphic to $\Qcal$ at all geometric points. Then $|\Hom(\Ecal, \Fcal)_{\Qcal}|$ is either empty or satisfies 
\[\dim |\Hom(\Ecal, \Fcal)_{\Qcal}| = \deg(\Ecal^\vee \otimes \Qcal)^\nonneg + \deg(\Qcal^\vee \otimes \Fcal)^\nonneg - \deg(\Qcal^\vee \otimes \Qcal)^\nonneg,\]
as shown in \cite[Lemma 3.3.10]{Arizona_extvb}. Hence the properties \ref{existence of nonzero bundle map from E to F} and \ref{positive codim for Hom minus surj} together imply that $\Hom(\Ecal, \Fcal)$ admits an $F$-point and satisfies
\[ \dim |\Hom(\Ecal, \Fcal)_{\Qcal}| < \dim |\Hom(\Ecal, \Fcal)| \quad \text{ for } \Qcal \in S.\]
\end{remark}

\subsection{Dimension counting by Harder-Narasimhan polygons}\label{Geometric interpretation of degrees}$ $

Our discussion in \S\ref{bundlemaps} suggests that we will have to understand quantities of the form $\deg(\Vcal^\vee \otimes \Wcal)^\nonneg$ for fairly arbitrary vector  bundles $\Vcal$ and $\Wcal$ on $\adicff$. In this subsection, we prove some useful lemmas for this purpose following the strategy developed in \cite[\S2.3]{Arizona_extvb}.

    
\begin{defn}\label{order_relation} Let $v$ and $w$ be arbitrary vectors in $\R^2$. 
\begin{enumerate}[label = (\arabic*)]
\item We denote by $v_x$ (resp. $v_y$) the $x$-coordinate (resp. $y$-coordinate) of $v$. 
\smallskip

\item If $v_x \neq 0$, we write $\mu(v):= v_y/v_x$ for the slope of $v$. 
\smallskip

\item If both $v$ and $w$ have nonzero $x$-coordinates, we will often write $v \prec w$ (resp. $v \preceq w$) in lieu of $\mu(v) < \mu(w)$ (resp. $\mu(v) \leq \mu(w)$). 
\smallskip

\item We write $v \times w$ for the (two-dimensional) cross product of $v$ and $w$, regarded as a scalar by the formula $v \times w = v_x w_y - v_y w_x.$
\end{enumerate}
\end{defn}
	
In this paper, we are exclusively interested in vectors with a positive $x$-coordinate. For such vectors, we can characterize the relations $\preceq$ and $\prec$ in terms of the two dimensional cross product as follows:

\begin{lemma}\label{relation_area} 
Let $v$ and $w$ be vectors in $\R^2$ with $v_x, w_x >0$. Then we have $v \prec w$ (resp. $v \preceq w$) if and only if $v \times w >0$ (resp. $v \times w \geq 0$). 
\end{lemma}
	
\begin{proof}
This is straightforward to check using Definition \ref{order_relation}. 
\end{proof}

We will make use of Lemma \ref{relation_area} by expressing HN polygons in terms of vectors. 
\begin{defn} \label{HN vector notation}

		Let $\Vcal$ be a vector bundle on $\adicff$ with Harder-Narasimhan decomposition
	\[ \Vcal = \bigoplus_{i=1}^l \trivbundle(\lambda_i)^{m_i}\]
	where $\lambda_1 > \lambda_2 > \cdots > \lambda_l$. We define the \emph{HN vectors of $\Vcal$} by
\[\HNvec(\Vcal) := (v_i)_{1\leq i \leq l}\]
where $v_i$ is the vector representing the $i$-th line segment in $\HN(\Vcal)$; more precisely, writing $\lambda_i = r_i/s_i$ in lowest terms with $s_i>0$, we set $v_i :=(m_i s_i, m_i r_i)$.  
\end{defn}

The following simple lemma is pivotal to our discussion in this section. 

	\begin{lemma}\label{diamond lemma for nonnegative degree}\label{degree in terms of HN vectors}
Let $\Vcal$ and $\Wcal$ be vector bundles on $\adicff$ with $\HNvec(\Vcal) = (v_i)$ and $\HNvec(\Wcal) = (w_j)$. Then we have
\begin{equation*}\label{degree formula in terms of HN vectors}
\deg(\Vcal^\vee \otimes \Wcal) = \sum_{i, j} v_i \times w_j \quad\quad \text{ and } \quad\quad \deg(\Vcal^\vee \otimes \Wcal)^\nonneg = \sum_{v_i \preceq w_j} v_i \times w_j
\end{equation*}
	\end{lemma}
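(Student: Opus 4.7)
My plan is to reduce both identities to a direct computation using the Harder-Narasimhan decompositions of $\Vcal$ and $\Wcal$, combined with Lemma \ref{basic properties of stable bundles}. Specifically, I would write $\Vcal \simeq \bigoplus_i \trivbundle(\lambda_i)^{\oplus m_i}$ and $\Wcal \simeq \bigoplus_j \trivbundle(\mu_j)^{\oplus n_j}$ with $\lambda_i = r_i/s_i$ and $\mu_j = u_j/t_j$ in lowest terms (and $s_i, t_j > 0$), and then expand
\[
\Vcal^\vee \otimes \Wcal \;\simeq\; \bigoplus_{i,j}\, \trivbundle(-\lambda_i)^{\oplus m_i} \otimes \trivbundle(\mu_j)^{\oplus n_j}
\]
using parts \ref{tensor product of stable bundles} and \ref{dual of stable bundles} of that lemma. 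Each $(i,j)$-summand is then semistable of slope $\mu_j - \lambda_i$.

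For the first identity, the key observation is that by part \ref{tensor product of stable bundles} of Lemma \ref{basic properties of stable bundles}, the $(i,j)$-summand has rank $m_i n_j s_i t_j$ and degree $m_i n_j (s_i u_j - r_i t_j)$. On the other hand, by Definition \ref{HN vector notation} the HN vectors are $v_i = (m_i s_i, m_i r_i)$ and $w_j = (n_j t_j, n_j u_j)$, so a direct expansion of the cross product gives $v_i \times w_j = m_i n_j (s_i u_j - r_i t_j)$. Hence the degree of the $(i,j)$-summand equals $v_i \times w_j$, and summing over all pairs $(i,j)$ yields $\deg(\Vcal^\vee \otimes \Wcal) = \sum_{i,j} v_i \times w_j$.

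For the second identity, I would observe that by Lemma \ref{slope_leq_part via HN decomp} the subbundle $(\Vcal^\vee \otimes \Wcal)^\nonneg$ is precisely the direct sum of those $(i,j)$-summands whose slope $\mu_j - \lambda_i$ is nonnegative, i.e.\ those with $\lambda_i \leq \mu_j$. Since $v_i$ and $w_j$ both have strictly positive $x$-coordinates, this is equivalent to $v_i \preceq w_j$ in the sense of Definition \ref{order_relation}. Restricting the previous summation to these indices gives the second identity; the boundary case $\lambda_i = \mu_j$ is harmless, since then $v_i$ and $w_j$ are parallel so $v_i \times w_j = 0$, and the corresponding summand is isomorphic to a power of $\trivbundle(0)$ with degree zero anyway.

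I do not anticipate any substantive obstacle: once one unwinds the definitions, the proof is essentially a bookkeeping exercise translating the classification of semistable bundles (Proposition \ref{classification of semistable bundles}) together with the tensor/dual formulas of Lemma \ref{basic properties of stable bundles} into the planar-vector language of Definitions \ref{order_relation} and \ref{HN vector notation}. The only subtlety worth double-checking is the sign and magnitude of the cross product, which is why I would write out $v_i \times w_j$ explicitly in terms of $m_i, n_j, r_i, s_i, u_j, t_j$ and compare it term-by-term with the degree formula of Lemma \ref{basic properties of stable bundles}\ref{tensor product of stable bundles}.
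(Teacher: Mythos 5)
Your proof is correct and takes essentially the same approach as the paper: reduce to the stable/semistable case via the HN decomposition and match the degree formula from Lemma \ref{basic properties of stable bundles}\ref{tensor product of stable bundles} against the cross product of HN vectors. The paper phrases it by verifying the semistable case first and then invoking HN decomposition, whereas you expand the full decomposition immediately, but the computation is identical.
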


	\begin{proof}
When $\Vcal$ and $\Wcal$ are both semistable, we quickly verify both identities in \eqref{degree formula in terms of HN vectors} using Lemma \ref{basic properties of stable bundles} and Lemma \ref{relation_area}.
Then we deduce the general case using the HN decompositions of $\Vcal$ and $\Wcal$.
\end{proof}
		

\begin{cor}\label{zero degree for completely dominating slopes}
For arbitrary vector bundles $\Vcal$ and $\Wcal$ on $\adicff$, we have
\[\dim \Hom(\Vcal, \Wcal) = 0 \quad\quad \text{ if and only if } \quad\quad \mumin(\Vcal) \geq \mumax(\Wcal). \]
\end{cor}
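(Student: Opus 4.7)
The plan is to reduce the statement to a direct analysis of the Harder-Narasimhan vectors of $\Vcal$ and $\Wcal$ by chaining together three results already established in this section.

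First I would invoke Proposition \ref{moduli of bundle maps dimension formulas}, which states that $\Hom(\Vcal, \Wcal)$ is equidimensional of dimension $\deg(\Vcal^\vee \otimes \Wcal)^\nonneg$. Since the zero map always provides a canonical point, $\Hom(\Vcal, \Wcal)$ is never empty, so $\dim \Hom(\Vcal, \Wcal) = 0$ is equivalent to $\deg(\Vcal^\vee \otimes \Wcal)^\nonneg = 0$. It thus suffices to verify
\[ \deg(\Vcal^\vee \otimes \Wcal)^\nonneg = 0 \quad \text{if and only if} \quad \mumin(\Vcal) \geq \mumax(\Wcal). \]

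Next I would apply Lemma \ref{degree in terms of HN vectors} to rewrite the left-hand side as
\[ \deg(\Vcal^\vee \otimes \Wcal)^\nonneg = \sum_{v_i \preceq w_j} v_i \times w_j, \]
where $\HNvec(\Vcal) = (v_i)$ and $\HNvec(\Wcal) = (w_j)$. The key point is that by Definition \ref{HN vector notation} every HN vector has a strictly positive $x$-coordinate, so for each pair $(v_i, w_j)$ appearing in the summation the relevant case of Lemma \ref{relation_area} yields $v_i \times w_j \geq 0$, with equality precisely when $\mu(v_i) = \mu(w_j)$. Hence the expression is a sum of nonnegative terms.

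Finally, the sum vanishes if and only if no pair $(v_i, w_j)$ gives a strict inequality $v_i \prec w_j$, which is exactly the assertion that every slope of $\Vcal$ is at least every slope of $\Wcal$, i.e., $\mumin(\Vcal) \geq \mumax(\Wcal)$. No serious obstacle is anticipated; the only point requiring minor care is handling HN vectors whose slopes are negative, which is precisely what the sign case-splitting in Lemma \ref{relation_area} is designed to absorb. In short, the corollary is essentially a bookkeeping consequence of the degree-by-HN-vectors formalism developed above and the dimension formula of Proposition \ref{moduli of bundle maps dimension formulas}.
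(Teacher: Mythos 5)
Your proof is correct and takes the same route as the paper, which records the result as an immediate consequence of Proposition \ref{moduli of bundle maps dimension formulas} and Lemma \ref{degree in terms of HN vectors}; you simply make explicit the nonnegativity of each term in $\sum_{v_i \preceq w_j} v_i \times w_j$ and the resulting equivalence. One small caution about attribution: Lemma \ref{relation_area} as printed splits cases according to the signs of $v_x$ and $v_y$, and applying that case split verbatim to an HN vector $v_i$ of negative slope (so $v_{i,x} > 0 > v_{i,y}$) would give $v_i \times w_j \leq 0$ for $v_i \preceq w_j$, the opposite of what you want. The nonnegativity you actually need does hold, but for the more robust reason that HN vectors always have positive $x$-coordinate, so $v_i \times w_j = v_{i,x}\,w_{j,x}\bigl(\mu(w_j) - \mu(v_i)\bigr) \geq 0$ exactly when $v_i \preceq w_j$, regardless of the sign of the slopes; this is presumably the intended content of Lemma \ref{relation_area}. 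With that observation in place, your conclusion — that the sum vanishes iff no pair has $v_i \prec w_j$, i.e.\ $\mumin(\Vcal) \geq \mumax(\Wcal)$ — is exactly right.
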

\begin{proof}
This is an immediate consequence of Proposition \ref{moduli of bundle maps dimension formulas} and Lemma \ref{degree in terms of HN vectors}
\end{proof}

\begin{remark}
This is not a consequence of Lemma \ref{zero hom for dominating slopes}; in fact, 
When $\mumin(\Vcal) = \mumax(\Wcal)$, Lemma \ref{zero hom for dominating slopes} and Corollary \ref{zero degree for completely dominating slopes} respectively yield $\mathrm{Hom}(\Vcal, \Wcal) \neq 0$ and $\dim \Hom(\Vcal, \Wcal) = 0$. 
\end{remark}

\begin{defn}
Given a vector bundle $\Vcal$ on $\adicff$, we write $\Vcal(\lambda) := \Vcal \otimes \trivbundle(\lambda)$ for any $\lambda \in \Q$. 
\end{defn}

\begin{lemma}\label{degree after shear}
Let $\Vcal$ and $\Wcal$ be vector bundles on $\adicff$. For any $\lambda \in \Q$ we have
\[\deg(\Vcal(\lambda)^\vee \otimes \Wcal(\lambda))^\nonneg = \rk(\trivbundle(\lambda))^2 \cdot \deg(\Vcal^\vee \otimes \Wcal)^\nonneg.\]
\end{lemma}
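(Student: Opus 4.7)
My plan is to reduce the identity to the tensor-product formula for $\trivbundle(\lambda)^\vee \otimes \trivbundle(\lambda)$ and then use that both $(\cdot)^\nonneg$ and $\deg$ behave additively on direct sums. Writing $\lambda = r/s$ in lowest terms with $s > 0$, we have $\rk(\trivbundle(\lambda)) = s$ by Lemma \ref{basic properties of stable bundles}, so the task is to produce a factor of $s^2$ on the right.

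First, I would rearrange the left-hand bundle as
\[
\Vcal(\lambda)^\vee \otimes \Wcal(\lambda) \simeq \Vcal^\vee \otimes \Wcal \otimes \bigl(\trivbundle(\lambda)^\vee \otimes \trivbundle(\lambda)\bigr).
\]
By Lemma \ref{basic properties of stable bundles}(3) we have $\trivbundle(\lambda)^\vee \simeq \trivbundle(-\lambda)$, and Lemma \ref{basic properties of stable bundles}(2) applied to the pair $r/s$ and $-r/s$ yields $\gcd(s^2,\, rs - rs) = \gcd(s^2, 0) = s^2$, giving
\[
\trivbundle(\lambda)^\vee \otimes \trivbundle(\lambda) \simeq \trivbundle(0)^{\oplus s^2}.
\]
Consequently $\Vcal(\lambda)^\vee \otimes \Wcal(\lambda) \simeq (\Vcal^\vee \otimes \Wcal)^{\oplus s^2}$.

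Next, I would observe that for any vector bundle $\Gcal$ on $\adicff$ the HN decomposition of $\Gcal^{\oplus s^2}$ is obtained from that of $\Gcal$ by multiplying each multiplicity by $s^2$ (by the uniqueness in Theorem \ref{existence of HN decomp}); in particular $(\Gcal^{\oplus s^2})^\nonneg \simeq (\Gcal^\nonneg)^{\oplus s^2}$. Applying this with $\Gcal = \Vcal^\vee \otimes \Wcal$ and using additivity of $\deg$ on direct sums then gives exactly
\[
\deg(\Vcal(\lambda)^\vee \otimes \Wcal(\lambda))^\nonneg = s^2 \cdot \deg(\Vcal^\vee \otimes \Wcal)^\nonneg,
\]
which is the asserted identity.

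As a sanity check, the same conclusion drops out directly from Lemma \ref{degree in terms of HN vectors}: tensoring by $\trivbundle(\lambda)$ sends each HN vector $v$ to $s \cdot T_\lambda(v)$, where $T_\lambda$ denotes the shear $(x, y) \mapsto (x, y + \lambda x)$; since $T_\lambda$ has determinant $1$ every cross product picks up a factor $s^2$, and since shears preserve slope comparisons the summation range $\{v_i \preceq w_j\}$ is unchanged. I do not anticipate any substantive obstacle here: the entire content lies in Lemma \ref{basic properties of stable bundles}(2), with only the mild care of invoking the convention $\gcd(n, 0) = n$ needed to extract the multiplicity $s^2$.
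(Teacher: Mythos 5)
Your primary argument is correct and takes a genuinely different route from the paper. The paper proves the lemma by writing out the HN vectors $\HNvec(\Vcal(\lambda)) = (v_i')$, $\HNvec(\Wcal(\lambda)) = (w_j')$, deriving the coordinate relations $v'_{i,x} = s\,v_{i,x}$, $v'_{i,y} = s\,v_{i,y} + r\,v_{i,x}$ from Lemma \ref{basic properties of stable bundles}, and then directly computing $v_i' \times w_j' = s^2 (v_i \times w_j)$ and plugging into Lemma \ref{degree in terms of HN vectors}. You instead observe $\trivbundle(\lambda)^\vee \otimes \trivbundle(\lambda) \simeq \trivbundle^{\oplus s^2}$ (via Lemma \ref{basic properties of stable bundles}(2)–(3) and the convention $\gcd(s^2,0)=s^2$), so that $\Vcal(\lambda)^\vee \otimes \Wcal(\lambda) \simeq (\Vcal^\vee \otimes \Wcal)^{\oplus s^2}$, and finish by additivity of $\deg$ and $(\cdot)^\nonneg$ over direct sums. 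Your argument is cleaner and more structural: it never opens up the HN polygons, and in particular it does not need to separately verify that the slope comparison $v_i \preceq w_j$ is preserved, since the whole bundle is literally a direct sum of $s^2$ copies of the original. The paper's computation is more pedestrian but has the incidental benefit of establishing the cross-product identity \eqref{cross product of HN vectors after shear}, which the author then reuses informally in the geometric remark following the lemma. Your ``sanity check'' paragraph is in fact a conceptual rephrasing of the paper's actual proof (the shear $T_\lambda$ has determinant $1$, and the dilation by $s$ contributes $s^2$), so you have effectively reproduced both arguments. No gaps.
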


\begin{proof}
By Lemma \ref{basic properties of stable bundles} we find
\[ \Vcal(\lambda)^\vee \otimes \Wcal(\lambda) \simeq \Vcal^\vee \otimes \Wcal \otimes \trivbundle(\lambda) \otimes \trivbundle(\lambda)^\vee \simeq \Vcal^\vee \otimes \Wcal \otimes \trivbundle^{\rk(\trivbundle(\lambda))^2}\]
and consequently obtain the desired assertion. 
\end{proof}

\begin{lemma}\label{degree after stretch}
Let $\Vcal$ and $\Wcal$ be vector bundles on $\adicff$. Take $\vertstretch{\Vcal}$ and $\vertstretch{\Wcal}$ to be vector bundles on $\adicff$ whose HN polygons are obtained by vertically stretching $\HN(\Vcal)$ and $\HN(\Wcal)$ by a positive integer factor $C$. Then we have
\[ \deg(\vertstretch{\Vcal}^\vee \otimes \vertstretch{\Wcal})^\nonneg = C \cdot \deg(\Vcal^\vee \otimes \Wcal)^\nonneg.\]
\end{lemma}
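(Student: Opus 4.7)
The plan is to mimic the proof of Lemma \ref{degree after shear}, working directly with the HN vector description from Definition \ref{HN vector notation} and applying the summation formula from Lemma \ref{degree in terms of HN vectors}. The vertical stretch by a factor $C$ is a strictly simpler operation than the shear-and-dilation coming from tensoring by $\trivbundle(\lambda)$, so the core computation will be almost immediate once the bookkeeping is set up.

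First, I would record how HN vectors transform under vertical stretching. Writing $\HNvec(\Vcal) = (v_i)$ and $\HNvec(\Wcal) = (w_j)$, the HN vectors of $\vertstretch{\Vcal}$ and $\vertstretch{\Wcal}$ are $\tilde v_i = (v_{i,x},\, C v_{i,y})$ and $\tilde w_j = (w_{j,x},\, C w_{j,y})$, directly from the definition of vertical stretching. In particular, the new slopes satisfy $\mu(\tilde v_i) = C\mu(v_i)$ and $\mu(\tilde w_j) = C\mu(w_j)$, so since $C>0$, the comparison $\tilde v_i \preceq \tilde w_j$ holds if and only if $v_i \preceq w_j$.

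Next, I would compute the cross product
\[
\tilde v_i \times \tilde w_j = v_{i,x}(C w_{j,y}) - (C v_{i,y}) w_{j,x} = C\,(v_i \times w_j),
\]
which reflects the geometric fact that a vertical stretch by factor $C$ scales signed areas by exactly $C$. Combining this with the slope observation and applying Lemma \ref{degree in terms of HN vectors} to both sides yields
\[
\deg(\vertstretch{\Vcal}^\vee \otimes \vertstretch{\Wcal})^\nonneg
= \sum_{\tilde v_i \preceq \tilde w_j} \tilde v_i \times \tilde w_j
= \sum_{v_i \preceq w_j} C\,(v_i \times w_j)
= C \cdot \deg(\Vcal^\vee \otimes \Wcal)^\nonneg,
\]
as desired.

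The one point requiring a brief check, and the closest thing to an obstacle, is ensuring that the vertically stretched polygon actually occurs as the HN polygon of some vector bundle on $\adicff$, so that $\vertstretch{\Vcal}$ and $\vertstretch{\Wcal}$ exist. Because $C$ is a positive integer, the stretched polygon retains integer breakpoints and rational slopes, and each edge $(v_{i,x},\, C v_{i,y})$ of slope $C\lambda_i$ decomposes as a direct sum of copies of the appropriate stable bundle via Theorem \ref{classification by HN polygon}; this existence verification is routine and I do not expect any genuine difficulty.
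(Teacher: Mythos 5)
Your proof is correct and follows essentially the same route as the paper: record how the HN vectors transform under the vertical stretch, observe that the slope comparisons $\preceq$ are preserved since $C>0$, compute the cross product scaling $\tilde v_i \times \tilde w_j = C\,(v_i \times w_j)$, and apply Lemma \ref{degree in terms of HN vectors}. The brief existence check you include at the end is a reasonable addition but not a point of difference.
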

\begin{proof}
Let us consider the HN vectors 
\[\HNvec(\Vcal) = (v_i), \quad\quad \HNvec(\Wcal) = (w_j), \quad\quad \HNvec(\vertstretch{\Vcal}) = (\vertstretch{v}_i), \quad\quad \HNvec(\vertstretch{\Wcal}) = (\vertstretch{w}_j).\] 
By construction, we have the following relations between these HN vectors. 
%
\[ \vertstretch{v}_{i, x} = v_{i, x}, \quad\quad \vertstretch{w}_{j, x} = w_{j, x}, \quad\quad \vertstretch{v}_{i, y} = C v_{i, y}, \quad\quad \vertstretch{w}_{j, y} = C w_{j, y}.\]
Now for each $i$ and $j$ we have
\begin{equation}\label{cross product of HN vectors after stretch}
\begin{aligned}
\vertstretch{v}_i \times \vertstretch{w}_j &= \vertstretch{v}_{i, x} \vertstretch{w}_{j, y} - \vertstretch{v}_{i, y} \vertstretch{w}_{j, x}
= C (v_{i, x} w_{j, y} - v_{i, y} w_{j, x}) = C \cdot (v_i \times w_j)
\end{aligned}
\end{equation}
We thus use Lemma \ref{relation_area}, Lemma \ref{degree in terms of HN vectors} and \eqref{cross product of HN vectors after stretch} to find
\begin{align*}
\deg(\vertstretch{\Vcal}^\vee \otimes \vertstretch{\Wcal})^\nonneg &= \sum_{\vertstretch{v}_i \preceq \vertstretch{w}_j} \vertstretch{v}_i \times \vertstretch{w}_j = \sum_{v_i \preceq w_j} C (v_i \times w_j) \\
&= C \sum_{v_i \preceq w_j} v_i \times w_j = C \cdot \deg(\Vcal^\vee \otimes \Wcal)^\nonneg,
\end{align*}
completing the proof. 
\end{proof}

\begin{remark}
The equation \eqref{cross product of HN vectors after stretch} represents the fact that the vertical stretch by a factor $C$ scales the area of an arbitrary parallelogram by the same factor. 
\end{remark}

\section{Classification of quotient bundles}
  
\subsection{The main statement and its consequences}\label{statement and consequences}$ $

Let us state our main theorem, which gives a complete classification of all quotient bundles of a given vector bundle on $\adicff$. 

\begin{theorem}\label{classification of quotient bundles}
Let $\Ecal$ be a vector bundle on $\adicff$. Then a vector bundle $\Fcal$ on $\adicff$ is a quotient bundle of $\Ecal$ if and only if the following equivalent conditions are satisfied:
\begin{enumerate}[label=(\roman*)]
\item\label{rank inequalities for quotients} For every $\mu \in \Q$, we have $\rk(\Ecal^{\leq \mu}) \geq \rk(\Fcal^{\leq \mu})$ with equality if and only if $\Ecal^{\leq \mu}$ and $\Fcal^{\leq \mu}$ are isomorphic.
\smallskip

\item\label{dual strong slopewise dominance for quotients} If we align $\HN(\Ecal)$ and $\HN(\Fcal)$ so that their right endpoints lie at the origin, then for each $i = 1, \cdots, \rk(\Fcal)$, the slope of $\HN(\Fcal)$ on $[-i, -i+1]$ is greater than or equal to the slope of $\HN(\Ecal)$ on $[-i-1, -i]$ unless $\HN(\Ecal)$ and $\HN(\Fcal)$ agree on $[-i, 0]$. 
\end{enumerate}

\smallskip
\begin{figure}[H]
\begin{tikzpicture}[scale=1]	

		\coordinate (right) at (0, 0);
		\coordinate (q0) at (-1,2);
		\coordinate (q1) at (-2.5, 3.4);
		\coordinate (q2) at (-6, 4.8);
		\coordinate (q3) at (-9, 4);
		

		\coordinate (p0) at (-2, 1.5);
		\coordinate (p1) at (-4.5, 2);
		\coordinate (p2) at (-6, 1.3);
		\coordinate (p3) at (-7, 0.1);
				
		\draw[step=1cm,thick] (right) -- (q0) --  (q1) -- (q2) -- (q3);
		\draw[step=1cm,thick] (right) -- (p0) --  (p1) -- (p2) -- (p3);
		
		\draw [fill] (q0) circle [radius=0.05];		
		\draw [fill] (q1) circle [radius=0.05];		
		\draw [fill] (q2) circle [radius=0.05];		
		\draw [fill] (q3) circle [radius=0.05];		
		\draw [fill] (right) circle [radius=0.05];
		
		\draw [fill] (p0) circle [radius=0.05];		
		\draw [fill] (p1) circle [radius=0.05];		
		\draw [fill] (p2) circle [radius=0.05];		
		\draw [fill] (p3) circle [radius=0.05];		
		
		\draw[step=1cm,dotted] (-6.5, -0.4) -- (-6.5, 5);
       		\draw[step=1cm,dotted] (-5.5, -0.4) -- (-5.5, 5);
       		\draw[step=1cm,dotted] (-6, -0.4) -- (-6, 5);

		\node at (-6.8,-0.8) {\scriptsize $-i-1$};
		\node at (-5.2,-0.8) {\scriptsize $-i+1$};
		\node at (-6,-0.8) {\scriptsize $-i$};
		
		\path (q3) ++(-0.8, 0.05) node {$\HN(\Ecal)$};
		\path (p3) ++(-0.8, 0.05) node {$\HN(\Fcal)$};
		\path (right) ++(0.3, -0.05) node {$O$};

\end{tikzpicture}
\caption{Illustration of the condition \ref{dual strong slopewise dominance for quotients} in Theorem \ref{classification of quotient bundles}.}
\end{figure}
\end{theorem}
We will discuss our proof of Theorem \ref{classification of quotient bundles} in the subsequent subsections. 
In this subsection we explain some classification results as consequences of Theorem \ref{classification of quotient bundles}. 



Our first corollary of Theorem \ref{classification of quotient bundles} dualizes the statement of Theorem \ref{classification of quotient bundles} to classify \emph{almost} all subsheaves of a given vector bundle on $\adicff$.

\begin{cor}\label{almost classification of subbundles}
Let $\Ecal$ be a vector bundle on $\adicff$. Then a vector bundle $\Dcal$ on $\adicff$ is 
a subsheaf of $\Ecal$ if the following equivalent conditions are satisfied:
\begin{enumerate}[label=(\roman*)]
\item\label{rank inequalities for subbundles} For every $\mu \in \Q$, we have $\rk(\Ecal^{\geq \mu}) \geq \rk(\Dcal^{\geq \mu})$ with equality if and only if $\Ecal^{\geq \mu}$ and $\Dcal^{\geq \mu}$ are isomorphic. 

\smallskip




\item\label{strong slopewise dominance for subbundles} If we align $\HN(\Dcal)$ and $\HN(\Ecal)$ so that their left endpoints lie at the origin, then for each $i = 1, \cdots, \rk(\Dcal)$, the slope of $\HN(\Dcal)$ on $[i-1, i]$ is less than or equal to the slope of $\HN(\Ecal)$ on $[i, i+1]$ unless $\HN(\Dcal)$ and $\HN(\Ecal)$ agree on $[0, i]$. 
\end{enumerate}

\smallskip
\begin{figure}[H]
\begin{tikzpicture}[scale=1]

		\coordinate (left) at (0, 0);
		\coordinate (q0) at (1,2);
		\coordinate (q1) at (2.5, 3.4);
		\coordinate (q2) at (6, 4.8);
		\coordinate (q3) at (9, 4);
		

		\coordinate (p0) at (2, 1.5);
		\coordinate (p1) at (4.5, 2);
		\coordinate (p2) at (6, 1.3);
		\coordinate (p3) at (7, 0.1);
				
		\draw[step=1cm,thick] (left) -- (q0) --  (q1) -- (q2) -- (q3);
		\draw[step=1cm,thick] (left) -- (p0) --  (p1) -- (p2) -- (p3);
		
		\draw [fill] (q0) circle [radius=0.05];		
		\draw [fill] (q1) circle [radius=0.05];		
		\draw [fill] (q2) circle [radius=0.05];		
		\draw [fill] (q3) circle [radius=0.05];		
		\draw [fill] (left) circle [radius=0.05];
		
		\draw [fill] (p0) circle [radius=0.05];		
		\draw [fill] (p1) circle [radius=0.05];		
		\draw [fill] (p2) circle [radius=0.05];		
		\draw [fill] (p3) circle [radius=0.05];		
		
		\draw[step=1cm,dotted] (5.5, -0.4) -- (5.5, 5);
       		\draw[step=1cm,dotted] (6.5, -0.4) -- (6.5, 5);
       		\draw[step=1cm,dotted] (6, -0.4) -- (6, 5);

		\node at (5.2,-0.8) {\scriptsize $i-1$};
		\node at (6.8,-0.8) {\scriptsize $i+1$};
		\node at (6,-0.8) {\scriptsize $i$};
		
		\path (q3) ++(0.8, 0.05) node {$\HN(\Ecal)$};
		\path (p3) ++(0.8, 0.05) node {$\HN(\Dcal)$};
		\path (right) ++(-0.3, -0.05) node {$O$};

\end{tikzpicture}
\caption{Illustration of the condition \ref{strong slopewise dominance for subbundles} in Corollary \ref{almost classification of subbundles}.}
\end{figure}
\end{cor}

\begin{proof}
Let us first to show that there exists an injective bundle map $\Dcal \inj \Ecal$ if $\Dcal$ satisfies the condition \ref{rank inequalities for subbundles}. By means of dualizing, it suffices to show that there exists a surjective bundle map $\Ecal^\vee \surj \Dcal^\vee$, or equivalently that $\Dcal^\vee$ is a quotient bundle of $\Ecal^\vee$. This follows from Theorem \ref{classification of quotient bundles} since by Lemma \ref{rank and degree of dual bundle} we can rewrite the condition \ref{rank inequalities for subbundles} as follows:
\begin{enumerate}[label=(\roman*)$^\vee$]
\item\label{rank inequalities for dual quotients} For every $\mu \in \Q$, we haev $\rk((\Ecal^\vee)^{\leq -\mu}) \geq \rk((\Dcal^\vee)^{\leq -\mu})$ with equality if and only if $(\Ecal^\vee)^{\leq -\mu}$ and $(\Dcal^\vee)^{\leq -\mu}$ are isomorphic.
\end{enumerate}

It remains to verify that the conditions \ref{rank inequalities for subbundles} and \ref{strong slopewise dominance for subbundles} are equivalent. By reflecting the HN polygons $\HN(\Dcal)$ and $\HN(\Ecal)$ about the $y$-axis, we obtain the HN polygons $\HN(\Dcal^\vee)$ and $\HN(\Ecal^\vee)$ with their right endpoints at the origin. We thus find that the condition \ref{strong slopewise dominance for subbundles} is equivalent to the following condition:
\begin{enumerate}[label=(\roman*)$^\vee$, start=2]
\item\label{dual slopewise dominance for dual quotients} For each $i = 1, \cdots, \rk(\Dcal^\vee)$, the slope of $\HN(\Dcal^\vee)$ on the interval $[-i, -i+1]$ is greater than or equal to the slope of $\HN(\Ecal^\vee)$ on $[-i-1, -i]$ unless $\HN(\Dcal^\vee)$ and $\HN(\Ecal^\vee)$ agree on $[-i, 0]$. 
\end{enumerate}
Moreover, the conditions \ref{rank inequalities for dual quotients} and \ref{dual slopewise dominance for dual quotients} are equivalent by Theorem \ref{classification of quotient bundles}. Hence we obtain the equivalence between the conditions \ref{rank inequalities for subbundles} and \ref{strong slopewise dominance for subbundles} as desired, thereby completing the proof. 
\end{proof}

\begin{remark} Corollary \ref{almost classification of subbundles} does not give a complete classification of subsheaves since the equality part of the condition \ref{rank inequalities for subbundles} is not necessary. The main underlying issue is that the cokernel of an injective bundle map is not necessarily a vector bundle. It turns out that the inequality in the condition \ref{rank inequalities for subbundles} alone gives a complete classification of subsheaves of $\Ecal$, as proved by the author in the sequel paper \cite{Hong_subvb}.
\end{remark}




As another consequence of Theorem \ref{classification of quotient bundles}, we have a complete classification of finitely globally generated vector bundles on $\adicff$. 

\begin{cor}
A vector bundle $\Ecal$ on $\adicff$ is generated by $n$ global sections if and only if the following conditions are satisfied:
\begin{enumerate}[label=(\roman*)]
\item\label{nonpositivity of slopes for globally generated bundles} $\HN(\Ecal)$ has only nonnegative slopes, i.e., $\Ecal^{<0} = 0$. 
\smallskip

\item\label{rank bound for globally generated bundles} We have $\rk(\Ecal) \leq n$ with equality if and only if $\Ecal$ is trivial (i.e., isomorphic to $\trivbundle^{\oplus n}$). 
\end{enumerate}
\end{cor}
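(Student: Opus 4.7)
The plan is to specialize Theorem \ref{classification of quotient bundles} to the case where the source bundle is $\trivbundle_\schff^{\oplus n}$, and to read off the resulting conditions. The starting point is essentially tautological: for a vector bundle $\Ecal$ on $\adicff$, being generated by $n$ global sections is the same as admitting a surjection $\trivbundle_\schff^{\oplus n} \surj \Ecal$, i.e., $\Ecal$ is a quotient bundle of $\trivbundle_\schff^{\oplus n}$. So I only need to unwind conditions \ref{rank inequalities for quotients} and \ref{equal rank condition for quotient bundles} of Theorem \ref{classification of quotient bundles} with the source taken to be $\trivbundle_\schff^{\oplus n}$ and the quotient taken to be our $\Ecal$.

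First I would use Lemma \ref{slope_leq_part via HN decomp} applied to $\trivbundle_\schff^{\oplus n} = \trivbundle(0)^{\oplus n}$ to record
\[ (\trivbundle_\schff^{\oplus n})^{\leq \mu} = \begin{cases} \trivbundle_\schff^{\oplus n} & \text{if } \mu \geq 0,\\ 0 & \text{if } \mu < 0. \end{cases} \]
With this in hand, condition \ref{rank inequalities for quotients} splits into two regimes. For $\mu < 0$ it reads $0 \geq \rk(\Ecal^{\leq \mu})$, which precisely says $\Ecal^{<0} = 0$, matching condition \ref{nonpositivity of slopes for globally generated bundles} of the corollary. For $\mu \geq 0$ it reads $n \geq \rk(\Ecal^{\leq \mu})$; since the right side is nondecreasing in $\mu$ with supremum $\rk(\Ecal)$, this family of inequalities is equivalent to the single inequality $n \geq \rk(\Ecal)$, which is the rank bound in condition \ref{rank bound for globally generated bundles}.

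Then I would translate condition \ref{equal rank condition for quotient bundles}. For $\mu < 0$ equality forces both sides to vanish and the required isomorphism $0 \simeq 0$ is automatic. For $\mu \geq 0$ equality reads $\rk(\Ecal^{\leq \mu}) = n$; combined with $\rk(\Ecal^{\leq \mu}) \leq \rk(\Ecal) \leq n$ this forces $\rk(\Ecal) = n$ and $\Ecal^{\leq \mu} = \Ecal$, so the required isomorphism becomes $\Ecal \simeq \trivbundle_\schff^{\oplus n}$. Conversely, whenever $\rk(\Ecal) = n$ the equality is attained for any sufficiently large $\mu$, forcing $\Ecal \simeq \trivbundle_\schff^{\oplus n}$. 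This matches the "equality if and only if" clause in condition \ref{rank bound for globally generated bundles} exactly. The entire argument is a direct specialization of Theorem \ref{classification of quotient bundles}, so there is no substantive obstacle beyond the initial identification of being $n$-generated with being a quotient of $\trivbundle_\schff^{\oplus n}$.
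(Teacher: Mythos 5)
Your proposal is correct and follows essentially the same route as the paper: specialize Theorem \ref{classification of quotient bundles} with source $\trivbundle_\schff^{\oplus n}$, compute $(\trivbundle_\schff^{\oplus n})^{\leq \mu}$, and translate the two conditions of the theorem into the two conditions of the corollary, handling the equality clause by observing that equality at any $\mu \geq 0$ forces $\Ecal^{\leq\mu} = \Ecal$ of rank $n$. The paper merely spells out the two directions of the equivalence (each implication separately) a bit more explicitly, but the content of the unwinding is identical.
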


\begin{proof}
A vector bundle $\Ecal$ on $\adicff$ is generated by $n$ global sections if and only if there is a surjective bundle map $\trivbundle^{\oplus n} \surj \Ecal$, 
which amounts to saying that $\Ecal$ is a quotient bundle of $\trivbundle^{\oplus n}$. By Theorem \ref{classification of quotient bundles}, this precisely means that we have $\rk((\trivbundle^{\oplus n})^{\leq \mu}) \geq \rk(\Ecal^{\leq \mu})$ for every $\mu \in \Q$ with equality if and only if $(\trivbundle^{\oplus n})^{\leq \mu}$ and  $\Ecal^{\leq \mu}$ are isomorphic. Since $(\trivbundle^{\oplus n})^{\leq \mu}$ is equal to $\trivbundle^{\oplus n}$ for $\mu \geq 0$ and zero for $\mu < 0$, 
we find that $\Ecal$ on $\adicff$ is generated by $n$ global sections if and only if the following conditions are satisfied:

\begin{enumerate}[label=(\roman*)']
\item\label{rank inequalities for globally generated bundles} For every $\mu < 0$, we have $\Ecal^{\leq \mu} = 0$. 

\smallskip

\item\label{equal rank condition for globally generated bundles} For every $\mu \geq 0$, we have $n \geq \rk(\Ecal^{\leq \mu})$ with equality if and only if $\Ecal^{\leq \mu}$ is trivial. 
\end{enumerate}
The conditions \ref{nonpositivity of slopes for globally generated bundles} and \ref{rank inequalities for globally generated bundles} are evidently equivalent. 
In addition, the condition \ref{equal rank condition for globally generated bundles} implies the condition \ref{rank bound for globally generated bundles} as we have $\Ecal^{\leq \mumax(\Ecal)} = \Ecal$. Hence it remains to prove that the condition \ref{rank bound for globally generated bundles} implies the condition \ref{equal rank condition for globally generated bundles}. 



The inequality in the condition \ref{rank bound for globally generated bundles} implies the inequality in the condition \ref{equal rank condition for globally generated bundles} as we have $\rk(\Ecal^{\leq \mu}) \leq \rk(\Ecal)$ for every $\mu \in \Q$. Therefore we only need to consider the equality part in the condition \ref{equal rank condition for globally generated bundles}. Let us now suppose that we have $\rk(\Ecal^{\leq \mu}) = n$ for some $\mu \in \Q$. 
The condition \ref{rank bound for globally generated bundles} yields $\rk(\Ecal^{\leq \mu}) \leq \rk(\Ecal) \leq n$, which in turn implies $\rk(\Ecal^{\leq \mu}) = \rk(\Ecal) = n$. The first equality means that $\Ecal^{\leq \mu}$ is isomorphic to $\Ecal$, while the second equality implies that $\Ecal$ is trivial by the condition \ref{rank bound for globally generated bundles}. We thus deduce that $\Ecal^{\leq \mu}$ is trivial as desired. 
\end{proof}

\subsection{Slopewise dominance of vector bundles}\label{slopewise dominance}$ $

The rest of this paper will be devoted to proving Theorem \ref{classification of quotient bundles}. 
In this section, we introduce and study the notion of \emph{slopewise dominance} which will be crucial for our proof. 


\begin{defn}\label{def of slopewise dominance}
Let $\Vcal$ and $\Wcal$ be vector bundles on $\adicff$. Assume that their HN polygons $\HN(\Vcal)$ and $\HN(\Wcal)$ are aligned as usual so that their \textbf{left endpoints} lie at the origin. We say that $\Vcal$ \emph{slopewise dominates} $\Wcal$ if for $i = 1, \cdots, \rk(\Wcal)$, the slope of $\HN(\Wcal)$ on the interval $[i-1, i]$ is less than or equal to the slope of $\HN(\Vcal)$ on this interval. 
\begin{figure}[H]
\begin{tikzpicture}	

		\coordinate (left) at (0, 0);
		\coordinate (q0) at (1,2);
		\coordinate (q1) at (2.5, 3.4);
		\coordinate (q2) at (6, 4.8);
		\coordinate (q3) at (9, 4);
		

		\coordinate (p0) at (2, 1.5);
		\coordinate (p1) at (4.5, 2);
		\coordinate (p2) at (7.5, 0.7);
				
		\draw[step=1cm,thick] (left) -- (q0) --  (q1) -- (q2) -- (q3);
		\draw[step=1cm,thick] (left) -- (p0) --  (p1) -- (p2);
		
		\draw [fill] (q0) circle [radius=0.05];		
		\draw [fill] (q1) circle [radius=0.05];		
		\draw [fill] (q2) circle [radius=0.05];		
		\draw [fill] (q3) circle [radius=0.05];		
		\draw [fill] (left) circle [radius=0.05];
		
		\draw [fill] (p0) circle [radius=0.05];		
		\draw [fill] (p1) circle [radius=0.05];		
		\draw [fill] (p2) circle [radius=0.05];		
		
		\draw[step=1cm,dotted] (3.5, -0.4) -- (3.5, 4.8);
       		\draw[step=1cm,dotted] (4, -0.4) -- (4, 4.8);

		\node at (3.4,-0.8) {\scriptsize $i-1$};
		\node at (4.1,-0.8) {\scriptsize $i$};
		
		\path (q3) ++(0.8, 0.05) node {$\HN(\Vcal)$};
		\path (p2) ++(0.8, 0.05) node {$\HN(\Wcal)$};
		\path (left) ++(-0.3, -0.05) node {$O$};

\end{tikzpicture}
\caption{Illustration of Definition \ref{def of slopewise dominance}}\label{slopes_cond_fig}
\end{figure}
\end{defn}

\begin{remark}
Slopewise dominance of $\Vcal$ on $\Wcal$ implies that $\rk(\Vcal) \geq \rk(\Wcal)$. 
\end{remark}



The notion of slopewise dominance 
gives us a characterization of the condition \ref{rank inequalities for quotients} in Theorem \ref{classification of quotient bundles} (and the condition \ref{rank inequalities for subbundles} in Corollary \ref{almost classification of subbundles}). 

\begin{lemma}[cf. \cite{Arizona_extvb} Corollary 4.2.2]\label{slopewise dominance and rank inequalities}
Let $\Vcal$ and $\Wcal$ be vector bundles on $\adicff$. 
\begin{enumerate}[label=(\arabic*)]
\item\label{subbundle rank inequalities and slopewise dominance} $\Vcal$ slopewise dominates $\Wcal$ if and only if $\rk(\Vcal^{\geq \mu}) \geq \rk(\Wcal^{\geq \mu})$ for every $\mu \in \Q$. 

\smallskip

\item\label{quotient rank inequantlies and dual slopewise dominance} $\Vcal^\vee$ slopewise dominates $\Wcal^\vee$ if and only if $\rk(\Vcal^{\leq \mu}) \geq \rk(\Wcal^{\leq \mu})$ for every $\mu \in \Q$. 
\end{enumerate}
\end{lemma}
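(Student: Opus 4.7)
The plan is to recast both sides of each equivalence in terms of a common slope function on integer positions. The key preliminary observation is that $\HN(\Vcal)$ has breakpoints only at integer $x$-coordinates: if $\Vcal \simeq \bigoplus \trivbundle(\lambda_i)^{\oplus m_i}$ with $\lambda_i = r_i/s_i$ in lowest terms, then by Lemma \ref{basic properties of stable bundles} each summand contributes the integer $m_i s_i$ to the rank. Consequently the slope of $\HN(\Vcal)$ is constant on every integer subinterval $[i-1,i] \subset [0, \rk(\Vcal)]$; let $\sigma_\Vcal(i)$ denote this common value. The function $\sigma_\Vcal(\cdot)$ is then non-increasing and rational-valued, and the basic dictionary
\[ \sigma_\Vcal(i) \geq \mu \iff 1 \leq i \leq \rk(\Vcal^{\geq \mu}) \]
identifies $\rk(\Vcal^{\geq \mu})$ as the generalized inverse of $\sigma_\Vcal$, while simultaneously reformulating slopewise dominance as the inequality $\sigma_\Vcal(i) \geq \sigma_\Wcal(i)$ holding for every $1 \leq i \leq \rk(\Wcal)$ (with $\rk(\Vcal) \geq \rk(\Wcal)$ implicit).

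For part (1), the forward implication is then a one-line verification: given slopewise dominance and any $\mu \in \Q$, if $i_0 := \rk(\Wcal^{\geq \mu}) \geq 1$ then $\sigma_\Vcal(i_0) \geq \sigma_\Wcal(i_0) \geq \mu$, and hence $i_0 \leq \rk(\Vcal^{\geq \mu})$ by the dictionary. For the converse, assuming the rank inequality for all $\mu \in \Q$, I first pick $\mu$ smaller than every HN slope of $\Vcal$ and $\Wcal$ to extract $\rk(\Vcal) \geq \rk(\Wcal)$; then for each $1 \leq i \leq \rk(\Wcal)$, setting $\mu := \sigma_\Wcal(i)$ yields $i \leq \rk(\Wcal^{\geq \mu}) \leq \rk(\Vcal^{\geq \mu})$, and the dictionary gives $\sigma_\Vcal(i) \geq \mu = \sigma_\Wcal(i)$.

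For part (2), I would deduce it formally from part (1) using the duality $\rk(\Vcal^{\leq \mu}) = \rk((\Vcal^\vee)^{\geq -\mu})$ obtained by applying Lemma \ref{rank and degree of dual bundle} to $\Vcal^\vee$ (together with $\Vcal^{\vee\vee} \simeq \Vcal$). After the change of variable $\nu = -\mu$, the family of inequalities $\{\rk(\Vcal^{\leq \mu}) \geq \rk(\Wcal^{\leq \mu})\}_{\mu \in \Q}$ becomes $\{\rk((\Vcal^\vee)^{\geq \nu}) \geq \rk((\Wcal^\vee)^{\geq \nu})\}_{\nu \in \Q}$, which by (1) is equivalent to $\Vcal^\vee$ slopewise dominating $\Wcal^\vee$. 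I do not foresee any serious obstacle; the only care point is the bookkeeping around boundary values of $\mu$ (in particular, the need to separately extract the rank inequality $\rk(\Vcal) \geq \rk(\Wcal)$ in the converse of part (1)), but these are handled cleanly by the single choice $\mu = \sigma_\Wcal(i)$ together with one sufficiently negative choice of $\mu$.
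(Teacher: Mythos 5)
Your proof is correct and follows essentially the same route as the paper: both sides compare the slopes of the HN polygons interval-by-interval at integer positions, relying on the fact that breakpoints occur at integers and the slopes are non-increasing. The only cosmetic difference is that you package the comparison as a ``dictionary'' via the helper function $\sigma_\Vcal(i)$ and argue both directions directly, whereas the paper phrases the two directions contrapositively; these are superficial variations on the same argument.
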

\begin{proof}
We first note that the statement \ref{quotient rank inequantlies and dual slopewise dominance} follows from the statement \ref{subbundle rank inequalities and slopewise dominance} as a dual statement. In fact, by Lemma \ref{rank and degree of dual bundle} we can rewrite the inequality $\rk(\Vcal^{\leq \mu}) \geq \rk(\Wcal^{\leq \mu})$ as $\rk((\Vcal^\vee)^{\geq -\mu}) \geq \rk((\Wcal^\vee)^{\geq -\mu})$. Hence we only need to prove the statement \ref{subbundle rank inequalities and slopewise dominance}.

We now assume the inequality $\rk(\Vcal^{\geq \mu}) \geq \rk(\Wcal^{\geq \mu})$ for every $\mu \in \Q$ and assert that $\Vcal$ slopewise dominates $\Wcal$. 
For each $i = 1, \cdots, \rk(\Wcal)$, we let $\mu_i$ be the slope of $\HN(\Wcal)$ on the interval $[i-1, i]$. If some $\mu_i$ is greater than the slope of $\HN(\Vcal)$ on $[i-1, i]$, convexity of HN polygons yields $\rk(\Vcal^{\geq \mu_i}) < i \leq \rk(\Wcal^{\geq \mu_i})$ which contradicts the inequality we assumed. 
We thus deduce that $\Vcal$ slopewise dominates $\Wcal$ as desired.

Conversely, we claim the inequality $\rk(\Vcal^{\geq \mu}) \geq \rk(\Wcal^{\geq \mu})$ for every $\mu \in \Q$ assuming that $\Vcal$ slopewise dominates $\Wcal$. Suppose for contradiction that $\rk(\Vcal^{\geq \mu}) < \rk(\Wcal^{\geq \mu})$ for some $\mu$. Then for $i = \rk(\Wcal^{\geq \mu})$, the slope of $\HN(\Wcal)$ on the interval $[i-1, i]$ is at least $\mu$ whereas the slope of $\HN(\Vcal)$ on this interval is less than $\mu$. In particular, the slope of $\HN(\Wcal)$ on $[i-1, i]$ is greater than the slope of $\HN(\Vcal)$ on this interval, yielding a desired contradiction. 
\end{proof}


The notion of slopewise dominance also yields an interesting inequality on degrees which will be useful to us. 

\begin{lemma}\label{nonnegative degree for slopewise dominant pairs}
Let $\Vcal$ and $\Wcal$ be vector bundles on $\adicff$ such that $\Vcal$ slopewise dominates $\Wcal$. We have an inequality
\[\deg(\Vcal)^\nonneg \geq \deg(\Wcal)^\nonneg.\]
\end{lemma}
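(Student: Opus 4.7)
The plan is to reinterpret $\deg(\Vcal)^\nonneg$ as a sum of ``unit-slope'' contributions read off from $\HN(\Vcal)$. First I would align $\HN(\Vcal)$ with its left endpoint at the origin and set $a_i$ to be the slope of $\HN(\Vcal)$ on the interval $[i-1,i]$ for $i=1,\dots,\rk(\Vcal)$; I define $b_i$ analogously for $\Wcal$. Using Theorem \ref{existence of HN decomp} together with Lemma \ref{basic properties of stable bundles}, each stable summand $\trivbundle(r_k/s_k)^{\oplus m_k}$ of $\Vcal$ (with $r_k/s_k$ in lowest terms) contributes $m_k s_k$ consecutive entries to the sequence $(a_i)$, each equal to $r_k/s_k$, and contributes $m_k r_k$ to $\deg(\Vcal^{\geq 0})$ precisely when $r_k/s_k \geq 0$. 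Summing over all stable summands gives the key identity
\[
\deg(\Vcal)^\nonneg \;=\; \deg(\Vcal^{\geq 0}) \;=\; \sum_{i=1}^{\rk(\Vcal)} \max(a_i,0),
\]
and the same identity of course holds for $\Wcal$.

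With this identity in hand, the rest is termwise bookkeeping. The slopewise dominance hypothesis asserts exactly that $a_i \geq b_i$ for $i=1,\dots,\rk(\Wcal)$ and, in particular, forces $\rk(\Vcal) \geq \rk(\Wcal)$. Consequently $\max(a_i,0) \geq \max(b_i,0)$ for each $i \leq \rk(\Wcal)$, while the additional terms $\max(a_i,0)$ for $\rk(\Wcal) < i \leq \rk(\Vcal)$ are nonnegative. Chaining these termwise inequalities yields $\deg(\Vcal)^\nonneg \geq \deg(\Wcal)^\nonneg$, as desired.

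The only mildly delicate step is the identity expressing $\deg(\Vcal)^\nonneg$ as $\sum_i \max(a_i,0)$, which amounts to subdividing each stable component $\trivbundle(r/s)$ of rank $s$ and degree $r$ into $s$ unit-width pieces of slope $r/s$ so that the degree $r = s\cdot(r/s)$ is recovered as the sum; this is where it is crucial that the HN vectors of Definition \ref{HN vector notation} have integer $x$-coordinates, so that the ``integer-interval'' reading of slopes is well-posed. Once this identity is in place, the inequality follows immediately, and I do not anticipate any real obstacle.
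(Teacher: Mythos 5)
Your proof is correct, and its kernel is the same geometric reading of the Harder--Narasimhan polygon that the paper uses: $\deg(\Vcal)^\nonneg$ is the maximum $y$-value of $\HN(\Vcal)$, and slopewise dominance provides the needed inequality. Where you differ is in the bookkeeping: you decompose $\deg(\Vcal)^\nonneg$ into the sum $\sum_{i=1}^{\rk(\Vcal)}\max(a_i,0)$ of unit-width slope increments and compare termwise against $\sum_i\max(b_i,0)$, whereas the paper introduces the intermediate quantity $d$ (the $y$-value of $\HN(\Vcal)$ at $x=\rk(\Wcal^\nonneg)$), bounds $\deg(\Wcal^\nonneg)\le d$ by the fact that $\HN(\Vcal)$ lies over $\HN(\Wcal)$, and then bounds $d\le\deg(\Vcal^\nonneg)$ by the monotonicity of $\HN(\Vcal)$ on $[0,\rk(\Vcal^\nonneg)]$ together with Lemma~\ref{slopewise dominance and rank inequalities}. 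Your version is a bit more self-contained in that the termwise comparison $\max(a_i,0)\ge\max(b_i,0)$ subsumes the rank inequality $\rk(\Vcal^\nonneg)\ge\rk(\Wcal^\nonneg)$ rather than needing to cite it; the paper's version is a bit more in keeping with the ``compare $y$-values of polygons'' style used elsewhere in Section~4. Both are fine, and your key identity $\deg(\Vcal)^\nonneg = \sum_i\max(a_i,0)$ is correct: each stable summand $\trivbundle(r_k/s_k)^{\oplus m_k}$ occupies $m_k s_k$ unit intervals of slope $r_k/s_k$ and contributes $m_k r_k$ to $\deg(\Vcal^{\ge 0})$ exactly when $r_k/s_k\ge 0$, which matches $\sum_i\max(a_i,0)$ since the slopes $a_i$ are nonincreasing.
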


\begin{proof}
We align $\HN(\Vcal)$ and $\HN(\Wcal)$ as in Definition \ref{def of slopewise dominance} so that their left endpoints lie at the origin. 
\begin{figure}[H]
\begin{tikzpicture}	

		\coordinate (left) at (0, 0);
		\coordinate (q0) at (1,2);
		\coordinate (q1) at (2.5, 3.4);
		\coordinate (q2) at (6, 4.8);
		\coordinate (q3) at (9, 4);
		

		\coordinate (p0) at (2, 1.5);
		\coordinate (p1) at (4.5, 2);
		\coordinate (p2) at (7.5, 0.7);

		\coordinate (aux) at (4.5, 4.2);
				
		\draw[step=1cm,thick] (left) -- (q0) --  (q1) -- (q2) -- (q3);
		\draw[step=1cm,thick] (left) -- (p0) --  (p1) -- (p2);
		
		\draw [fill] (q0) circle [radius=0.05];		
		\draw [fill] (q1) circle [radius=0.05];		
		\draw [fill] (q2) circle [radius=0.05];		
		\draw [fill] (q3) circle [radius=0.05];		
		\draw [fill] (left) circle [radius=0.05];
		
		\draw [fill] (p0) circle [radius=0.05];		
		\draw [fill] (p1) circle [radius=0.05];		
		\draw [fill] (p2) circle [radius=0.05];		

		
		\draw[step=1cm,dotted] (4.5, -0.4) -- (4.5, 4.2);
       		\draw[step=1cm,dotted] (6, -0.4) -- (6, 4.8);

		\node at (4.5,-0.8) {\scriptsize $\rk(\Wcal^\nonneg)$};
		\node at (6,-0.8) {\scriptsize $\rk(\Vcal^\nonneg)$};
		
		\path (q3) ++(0.8, 0.05) node {$\HN(\Vcal)$};
		\path (p2) ++(0.8, 0.05) node {$\HN(\Wcal)$};
		\path (left) ++(-0.3, -0.05) node {$O$};

		\path (p1) ++(-0.05, 0.25) node {\scriptsize $(\rk(\Wcal^\nonneg), \deg(\Wcal^\nonneg)$};
		\path (q2) ++(0.3, 0.3) node {\scriptsize $(\rk(\Vcal^\nonneg), \deg(\Vcal^\nonneg)$};

		\path (aux) ++(-0.5, 0.25) node {\scriptsize $(\rk(\Wcal^\nonneg), d)$};

\end{tikzpicture}
\caption{Comparison of nonnegative parts using slopewise dominance}
\end{figure}
We denote by $d$ the $y$-value of $\HN(\Vcal)$ at $\rk(\Wcal^\nonneg)$. Since $\HN(\Vcal)$ lies above $\HN(\Wcal)$ by slopewise dominance,  
we compare the $y$-values of $\HN(\Vcal)$ and $\HN(\Wcal)$ at $\rk(\Wcal^\nonneg)$ and obtain
\[\deg(\Wcal^\nonneg) \leq d.\]
Moreover, we observe that the $y$-value of $\HN(\Vcal)$ increases on the interval $[0, \rk(\Vcal^\nonneg)]$. Since $\rk(\Wcal^\nonneg) \leq \rk(\Vcal^\nonneg)$ by Lemma \ref{slopewise dominance and rank inequalities}, we compare the $y$-values of $\HN(\Vcal)$ at $\rk(\Wcal^\nonneg)$ and $\rk(\Vcal^\nonneg)$ to find
\[ d \leq \deg(\Vcal^\nonneg).\]
We thus combine the two inequalities to obtain the desired inequality. 
\end{proof}

\begin{remark}
By the same argument we can prove the inequality $\deg(\Vcal)^{\geq \mu} \geq \deg(\Wcal)^{\geq \mu}$ for all $\mu >0$. However, this inequality does not necessarily hold for $\mu <0$. In fact, when $\mu$ is sufficiently small the inequality is equivalent to $\deg(\Vcal) \geq \deg(\Wcal)$, which doesn't necessarily hold as shown by $\Vcal = \trivbundle(1)^{\oplus 2} \oplus \trivbundle(-2)$ and $\Wcal = \trivbundle(1/2)$.

\end{remark}

A number of our reduction arguments will use the following decomposition lemma regarding slopewise dominance. 

\begin{lemma}\label{existence of maximal common factor decomp}
Let $\Vcal$ and $\Wcal$ be vector bundles on $\adicff$ such that $\Vcal$ slopewise dominates $\Wcal$. Then we have decompositions
\begin{equation}\label{max common factor decomp}
 \Vcal \simeq \Ucal \oplus \Vcal' \quad\quad \text{ and } \quad\quad \Wcal \simeq \Ucal \oplus \Wcal'
\end{equation}
satisfying the following properties:
\begin{enumerate}[label=(\roman*)]
\item\label{slopewise dominance for complement part} $\Vcal'$ slopewise dominates $\Wcal'$. 
\smallskip

\item\label{inequality for max slopes of complement parts} If $\Wcal' \neq 0$, we have $\mumax(\Vcal')>\mumax(\Wcal')$. 
\smallskip

\item\label{ineqaulities for min slope of common factor} If $\Ucal \neq 0$ and $\Wcal' \neq 0$, we have $\mumin(\Ucal) \geq \mumax(\Vcal') > \mumax(\Wcal')$.
\end{enumerate}
\end{lemma}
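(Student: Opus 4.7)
The plan is to construct $\Ucal$ by greedily peeling off the common top slopes of $\Vcal$ and $\Wcal$, verifying the three properties by induction on $\rk(\Wcal)$. The base case $\rk(\Wcal)=0$ is trivial with $\Ucal = 0$, $\Vcal' = \Vcal$, $\Wcal' = 0$.

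For the inductive step, I first dispose of the easy case $\mumax(\Vcal) > \mumax(\Wcal)$ by taking $\Ucal = 0$, $\Vcal' = \Vcal$, $\Wcal' = \Wcal$: conditions (i) and (ii) hold tautologically and (iii) is vacuous. Otherwise, applying slopewise dominance to the interval $[0,1]$ forces $\mumax(\Vcal) = \mumax(\Wcal) =: \lambda$. Let $m$ and $n$ denote the multiplicities of $\Ocal(\lambda)$ in the HN decompositions of $\Vcal$ and $\Wcal$ respectively. A key point to establish is that $m \geq n$: otherwise $\HN(\Vcal)$ has slope strictly below $\lambda$ on the interval $[m, m+1]$ while $\HN(\Wcal)$ still has slope $\lambda$ there, contradicting slopewise dominance.

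Next, I will write $\Vcal \simeq \Ocal(\lambda)^{\oplus n} \oplus \Vcal_1$ and $\Wcal \simeq \Ocal(\lambda)^{\oplus n} \oplus \Wcal_1$, where $\Vcal_1$ and $\Wcal_1$ are obtained by deleting $n$ copies of $\Ocal(\lambda)$ from the respective HN decompositions. The slope of $\HN(\Vcal_1)$ on $[i-1, i]$ equals the slope of $\HN(\Vcal)$ on $[n+i-1, n+i]$, and analogously for $\Wcal$; consequently, slopewise dominance of $\Vcal_1$ over $\Wcal_1$ follows by restricting the dominance of $\Vcal$ over $\Wcal$ to the intervals $[n+i-1, n+i]$ for $i = 1, \ldots, \rk(\Wcal_1)$. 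By the induction hypothesis, I obtain decompositions $\Vcal_1 \simeq \Ucal_1 \oplus \Vcal'$ and $\Wcal_1 \simeq \Ucal_1 \oplus \Wcal'$ satisfying (i)--(iii), and I set $\Ucal := \Ocal(\lambda)^{\oplus n} \oplus \Ucal_1$. Conditions (i) and (ii) then transfer directly from the induction.

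The main obstacle I anticipate is the propagation of condition (iii). Assuming $\Wcal' \neq 0$, a useful preliminary is that $\mumax(\Vcal_1) \leq \lambda$, and hence $\mumax(\Ucal_1) \leq \lambda$ whenever $\Ucal_1 \neq 0$, since $\Ucal_1$ is a direct summand of $\Vcal_1$. When $\Ucal_1 \neq 0$, this yields $\mumin(\Ucal) = \mumin(\Ucal_1)$, so the inductive inequality $\mumin(\Ucal_1) \geq \mumax(\Vcal')$ gives the conclusion. When $\Ucal_1 = 0$, the inductive version of (ii) delivers $\mumax(\Vcal_1) > \mumax(\Wcal_1)$; combining with $\Vcal' = \Vcal_1$ and $\mumin(\Ucal) = \lambda \geq \mumax(\Vcal_1)$ then yields $\mumin(\Ucal) \geq \mumax(\Vcal')$, completing the verification.
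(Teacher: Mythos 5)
Your proof is correct, and it takes a genuinely different route from the paper's. The paper aligns the two HN polygons at the origin, introduces the vertical-distance function $d(x) = \HN(\Vcal)(x) - \HN(\Wcal)(x)$ (nonnegative and increasing by slopewise dominance), and takes $r$ to be the largest value with $d(r) = 0$; it then argues separately that $r$ must be an integer (because one of the polygons must change slope at $r$, and polygon vertices occur at integers), so the common initial segment $[0,r]$ defines a bundle $\Ucal$, with $\Vcal'$ and $\Wcal'$ given by the tails. Your version arrives at the same decomposition by induction on $\rank(\Wcal)$, peeling off $\trivbundle(\lambda)^{\oplus n}$ one slope at a time, where $\lambda = \mumax(\Vcal) = \mumax(\Wcal)$ and $n$ is the smaller multiplicity (with $m \geq n$ justified correctly via the interval $[m, m+1]$). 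The tradeoff: the paper's proof is shorter and more geometric, but must justify integrality of $r$ as a standalone observation; your inductive peeling sidesteps that issue entirely, since the peeled pieces are sums of stable bundles by construction, at the cost of the case analysis needed to propagate property (iii) through the recursion (which you handle carefully — the split into $\Ucal_1 \neq 0$ versus $\Ucal_1 = 0$, together with the observation $\mumax(\Vcal_1) \leq \lambda$, is exactly what is needed).
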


\begin{proof}
We assume that $\HN(\Vcal)$ and $\HN(\Wcal)$ are aligned as in Definition \ref{def of slopewise dominance}. For each $x \in [0, \rk(\Wcal)]$, we define $d(x)$ to be the vertical distance between $\HN(\Vcal)$ and $\HN(\Wcal)$ at $x$. Note that $d(x)$ is nonnegative and increasing by slopewise dominance of $\Vcal$ on $\Wcal$. 

Let us take the maximum $r$ with $d(r) = 0$. The interval $[0, r]$ corresponds to the common part of $\HN(\Vcal)$ and $\HN(\Wcal)$. Moreover, unless we have $r = \rk(\Wcal)$ the polygon $\HN(\Wcal)$ changes its slope at $r$ so that $d(x)$ becomes positive after this point. Hence $r$ must be an integer. 

We take $\Ucal$ to be the vector bundle on $\adicff$ whose HN polygon is given by the common part of $\HN(\Vcal)$ and $\HN(\Wcal)$, as illustrated by the red polygon in the figure below. We also take $\Vcal'$ and $\Wcal'$ to be vector bundles on $\adicff$ whose HN polygons are given by the complement subpolygons of $\HN(\Vcal)$ and $\HN(\Wcal)$, as illustrated by the blue and green polygons in the figure below. Note that these definitions are valid since $r$ is an integer. 
\begin{figure}[H]
\begin{tikzpicture}	

		\coordinate (left) at (0, 0);
		\coordinate (q0) at (1,2.5);
		\coordinate (q1) at (4, 4.5);
		\coordinate (q2) at (6.5, 4.5);
		\coordinate (q3) at (9, 2.5);
		

		\coordinate (p0) at (q0);
		\coordinate (p1) at (2.5, 3.5);
		\coordinate (p2) at (5.5, 3);
		\coordinate (p3) at (7.5, 0.5);
				
		\draw[step=1cm,thick, color=red] (left) -- (p0) --  (p1);
		\draw[step=1cm,thick, color=blue] (p1) -- (q1) -- (q2) -- (q3);
		\draw[step=1cm,thick, color=green] (p1) -- (p2) -- (p3);
		
		\draw [fill] (q0) circle [radius=0.05];		
		\draw [fill] (q1) circle [radius=0.05];		
		\draw [fill] (q2) circle [radius=0.05];		
		\draw [fill] (q3) circle [radius=0.05];		
		\draw [fill] (left) circle [radius=0.05];
		
		\draw [fill] (p0) circle [radius=0.05];		
		\draw [fill] (p1) circle [radius=0.05];		
		\draw [fill] (p2) circle [radius=0.05];		
		\draw [fill] (p3) circle [radius=0.05];		
		
		\draw[step=1cm,dotted] (2.5, -0.4) -- (2.5, 3.5);

		\node at (2.5,-0.8) {\scriptsize $r$};
		
		\path (q3) ++(0.8, 0.05) node {$\HN(\Vcal)$};
		\path (p3) ++(0.8, 0.05) node {$\HN(\Wcal)$};
		\path (left) ++(-0.3, -0.05) node {$O$};

		\path (p0) ++(-0.5, -0.6) node {\color{red}$\Ucal$};
		\path (q2) ++(1.5, -0.7) node {\color{blue}$\Vcal'$};
		\path (p2) ++(1, -0.6) node {\color{green}$\Wcal'$};



\end{tikzpicture}
\caption{Illustration of the decompositions \eqref{max common factor decomp} in terms of HN polygons.}
\end{figure}

It remains to check the desired properties for $\Ucal$, $\Vcal'$ and $\Wcal'$. By construction we have decompositions 
\[ \Vcal \simeq \Ucal \oplus \Vcal' \quad\quad \text{ and } \quad\quad \Wcal \simeq \Ucal \oplus \Wcal'.\]
Moreover, we obtain slopewise dominance of $\Vcal'$ on $\Wcal'$ from slopewise dominance of $\Vcal$ on $\Wcal$. If $\Wcal' \neq 0$, we have $r < \rk(\Wcal)$ and therefore deduce the strict inequality $\mumax(\Vcal')>\mumax(\Wcal')$ from the fact that $d(x)$ becomes positive after $r$. If $\Ucal \neq 0$ and $\Wcal' \neq 0$, we also have $\mumin(\Ucal) \geq \mumax(\Vcal')$ by convexity of $\HN(\Vcal)$, thereby obtaining a combined inequality $\mumin(\Ucal) \geq \mumax(\Vcal') > \mumax(\Wcal')$. 
\end{proof}

We will also need the following duality of slopewise dominance for vector bundles of equal ranks. 

\begin{lemma}\label{duality of slopewise dominance for equal rank case}
Let $\Vcal$ and $\Wcal$ be vector bundles on $\adicff$ with $\rk(\Vcal) = \rk(\Wcal)$. Then $\Vcal$ slopewise dominates $\Wcal$ if and only if $\Wcal^\vee$ slopewise dominates $\Vcal^\vee$. 
\end{lemma}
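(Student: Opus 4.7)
The plan is to reduce the statement to a direct computation of HN slopes under dualization, using that $\rk(\Vcal)=\rk(\Wcal)$ makes the required re-indexing clean.

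First I would record the following shape of $\HN(\Vcal^\vee)$: by Lemma \ref{basic properties of stable bundles}(3), if $\Vcal \simeq \bigoplus_i \trivbundle(\lambda_i)^{\oplus m_i}$ with $\lambda_1>\lambda_2>\cdots>\lambda_l$, then $\Vcal^\vee\simeq\bigoplus_i\trivbundle(-\lambda_i)^{\oplus m_i}$, whose HN slopes in increasing order are $-\lambda_1<-\lambda_2<\cdots<-\lambda_l$. Concretely, with left endpoints placed at the origin, $\HN(\Vcal^\vee)$ is obtained from $\HN(\Vcal)$ by the point-reflection through the origin followed by the translation carrying the right endpoint of $\HN(\Vcal)$, namely $(\rk(\Vcal),\deg(\Vcal))$, to the origin. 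Equivalently, a point $(x,y)$ lies on $\HN(\Vcal)$ if and only if $(\rk(\Vcal)-x,\,y-\deg(\Vcal))$ lies on $\HN(\Vcal^\vee)$. In particular, writing $n:=\rk(\Vcal)$, the slope of $\HN(\Vcal^\vee)$ on $[i-1,i]$ equals the negative of the slope of $\HN(\Vcal)$ on $[n-i,n-i+1]$, for each $i=1,\dots,n$.

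Next I would apply this observation symmetrically to $\Vcal$ and $\Wcal$, using the hypothesis $\rk(\Vcal)=\rk(\Wcal)=n$, which ensures that the reflection sends the same interval $[i-1,i]$ on the dual side to the common interval $[n-i,n-i+1]$ on the original side. Then slopewise dominance of $\Wcal^\vee$ over $\Vcal^\vee$, namely
\[
\text{slope}_{\HN(\Vcal^\vee)}[i-1,i]\;\le\;\text{slope}_{\HN(\Wcal^\vee)}[i-1,i]\qquad(i=1,\dots,n),
\]
negates via the formula above to
\[
\text{slope}_{\HN(\Wcal)}[n-i,n-i+1]\;\le\;\text{slope}_{\HN(\Vcal)}[n-i,n-i+1]\qquad(i=1,\dots,n),
\]
which after the substitution $j=n-i+1$ is exactly slopewise dominance of $\Vcal$ over $\Wcal$. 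Running the equivalence in both directions completes the proof.

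There is no real obstacle: once the precise description of $\HN(\Vcal^\vee)$ as a reflect-and-translate of $\HN(\Vcal)$ is laid down, the equal-rank hypothesis makes the interval-by-interval comparison a purely formal index reversal. The only thing to be careful about is the alignment convention in Definition \ref{def of slopewise dominance}: both $\HN(\Vcal^\vee)$ and $\HN(\Wcal^\vee)$ must be placed with their left endpoints at the origin, which is automatic from the recipe above and uses the equality $\rk(\Vcal)=\rk(\Wcal)$ so that the re-indexing $i\mapsto n-i+1$ is a bijection of $\{1,\dots,n\}$ with itself.
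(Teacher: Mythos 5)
Your proof is correct and follows essentially the same route as the paper's: both arguments reflect the Harder--Narasimhan polygon, track how the reflection permutes the integer subintervals and negates the slopes on them, and use the equal-rank hypothesis to turn the resulting interval-by-interval comparison into a clean index reversal $i \mapsto n-i+1$. The paper phrases this by reflecting about the $y$-axis and comparing slopes over the intervals $[-i,-i+1]$; you instead translate the dual polygon back so its left endpoint sits at the origin and state the slope correspondence $\text{slope}_{\HN(\Vcal^\vee)}[i-1,i] = -\text{slope}_{\HN(\Vcal)}[n-i,n-i+1]$ explicitly. That is the same content in a slightly more formulaic presentation.

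One small correction worth flagging: the transformation $(x,y)\mapsto(\rk(\Vcal)-x,\,y-\deg(\Vcal))$ is not ``point-reflection through the origin followed by a translation.'' A point-reflection composed with a translation is orientation-preserving (it is itself a point-reflection about some other center), whereas your map is orientation-reversing --- it negates only the $x$-slot relative to its fixed direction, so it is a reflection in the vertical line $x = \rk(\Vcal)/2$ composed with a vertical translation, or equivalently the $y$-axis reflection the paper uses followed by the horizontal-and-vertical translation bringing the new left endpoint to the origin. Your displayed formula and the slope relation you derive from it are nonetheless correct, so the error is purely in the verbal description of the isometry and does not affect the argument.
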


\begin{proof}

We align the polygons $\HN(\Vcal)$ and $\HN(\Wcal)$ so that their left points lie at the origin. By reflecting $\HN(\Vcal)$ and $\HN(\Wcal)$ about the $y$-axis, we obtain the polygons $\HN(\Vcal^\vee)$ and $\HN(\Wcal^\vee)$ with their right points at the origin. 
\begin{figure}[H]
\begin{tikzpicture}	

		\coordinate (left) at (0, 0);
		\coordinate (q0) at (1,2.5);
		\coordinate (q1) at (2.5, 4);
		\coordinate (q2) at (5, 4.5);
		

		\coordinate (p0) at (2, 1.2);
		\coordinate (p1) at (4, 1.2);
		\coordinate (p2) at (5, 0.5);
				
		\draw[step=1cm,thick] (left) -- (q0) --  (q1) -- (q2);
		\draw[step=1cm,thick] (left) -- (p0) --  (p1) -- (p2);
		
		\draw [fill] (q0) circle [radius=0.05];		
		\draw [fill] (q1) circle [radius=0.05];		
		\draw [fill] (q2) circle [radius=0.05];		
		\draw [fill] (left) circle [radius=0.05];
		
		\draw [fill] (p0) circle [radius=0.05];		
		\draw [fill] (p1) circle [radius=0.05];		
		\draw [fill] (p2) circle [radius=0.05];		

		
		\draw[step=1cm,dotted] (3, -0.4) -- (3, 4.8);
       		\draw[step=1cm,dotted] (3.5, -0.4) -- (3.5, 4.8);
       		\draw[step=1cm,dotted] (5, -0.4) -- (5, 4.8);

		\node at (2.9,-0.8) {\scriptsize $i-1$};
		\node at (3.6,-0.8) {\scriptsize $i$};
		\node at (5,-0.8) {\scriptsize $r$};
		
		\path (q2) ++(0.8, 0.05) node {$\HN(\Vcal)$};
		\path (p2) ++(0.8, 0.05) node {$\HN(\Wcal)$};
		\path (left) ++(0, -0.3) node {$O$};

		\coordinate (-q0) at (-1,2.5);
		\coordinate (-q1) at (-2.5, 4);
		\coordinate (-q2) at (-5, 4.5);
		

		\coordinate (-p0) at (-2, 1.2);
		\coordinate (-p1) at (-4, 1.2);
		\coordinate (-p2) at (-5, 0.5);
				
		\draw[step=1cm,thick] (left) -- (-q0) --  (-q1) -- (-q2);
		\draw[step=1cm,thick] (left) -- (-p0) --  (-p1) -- (-p2);
		
		\draw [fill] (-q0) circle [radius=0.05];		
		\draw [fill] (-q1) circle [radius=0.05];		
		\draw [fill] (-q2) circle [radius=0.05];		
		
		\draw [fill] (-p0) circle [radius=0.05];		
		\draw [fill] (-p1) circle [radius=0.05];		
		\draw [fill] (-p2) circle [radius=0.05];		

		
		\draw[step=1cm,dotted] (-3, -0.4) -- (-3, 4.8);
       		\draw[step=1cm,dotted] (-3.5, -0.4) -- (-3.5, 4.8);
       		\draw[step=1cm,dotted] (-5, -0.4) -- (-5, 4.8);

		\node at (-2.8,-0.8) {\scriptsize $-i+1$};
		\node at (-3.7,-0.8) {\scriptsize $-i$};
		\node at (-5,-0.8) {\scriptsize $-r$};
		
		\path (-q2) ++(-0.8, 0.05) node {$\HN(\Vcal^\vee)$};
		\path (-p2) ++(-0.8, 0.05) node {$\HN(\Wcal^\vee)$};

\end{tikzpicture}
\caption{Duality of slopewise dominance for vector bundles of equal ranks}
\end{figure}
\noindent Note that $\Vcal, \Wcal, \Vcal^\vee$ and $\Wcal^\vee$ all have equal rank by our assumption $\rk(\Vcal) = \rk(\Wcal)$. We let $r$ denote this common rank of $\Vcal, \Wcal, \Vcal^\vee$ and $\Wcal^\vee$. 

With this setup, we can establish our assertion by proving equivalence of the following statements:
\begin{enumerate}[label=(\alph*)]
\item\label{slopewise dominace of W dual on V dual} $\Wcal^\vee$ slopewise dominates $\Vcal^\vee$. 

\item\label{slope comparison of dual polygons} For each $i = 1, 2, \cdots, r$, the slope of $\HN(\Vcal^\vee)$ on the interval $[-i, -i+1]$ is less than or equal to the slope of $\HN(\Wcal^\vee)$ on this interval.

\item\label{slope comparison of origianl polygons} For each $i = 1, 2, \cdots, r$, the slope of $\HN(\Wcal)$ on the interval $[i-1, i]$ is less than or equal to the slope of $\HN(\Vcal)$ on this interval. 

\item\label{slopewise dominance of V on W} $\Vcal$ slopewise dominates $\Wcal$. 
\end{enumerate}
Equivalence between \ref{slopewise dominace of W dual on V dual} and \ref{slope comparison of dual polygons} is a consequence of the fact that the left points of $\HN(\Vcal^\vee)$ and $\HN(\Wcal^\vee)$ have the same $x$-values of $-r$ in our alignment; in fact, to compare the slopes as per Definition \ref{def of slopewise dominance} we only have to align the left points at the same $x$-values. Equivalence between \ref{slope comparison of dual polygons} and \ref{slope comparison of origianl polygons} is immediate since the slope of $\HN(\Vcal^\vee)$ (resp. $\HN(\Wcal^\vee)$) on $[-i, -i+1]$ is the negative of the slope of $\HN(\Vcal)$ (resp. $(\HN(\Wcal)$) on $[i-1, i]$. Equivalence between \ref{slope comparison of origianl polygons} and \ref{slopewise dominance of V on W} is evident by
Definition \ref{def of slopewise dominance}. 
\end{proof}

\subsection{Formulation of the key inequality}\label{reformulation of statement}$ $

Our primary goal in this subsection is to reduce the statement of Theorem \ref{classification of quotient bundles} to a quantitative statement which we can prove using the results from \S\ref{Geometric interpretation of degrees}.

We begin by establishing the equivalence of the two characterizations of quotient bundles in the statement of Theorem \ref{classification of quotient bundles}. 
\begin{prop}\label{classification of quotient bundles by slopewise dominance}
For arbitrary vector bundles $\Ecal$ and $\Fcal$ on $\adicff$, the conditions \ref{rank inequalities for quotients} and \ref{dual strong slopewise dominance for quotients} in Theorem \ref{classification of quotient bundles} are equivalent.
\end{prop}

\begin{proof}
As in the statement of Theorem \ref{classification of quotient bundles}, we align $\HN(\Ecal)$ and $\HN(\Fcal)$ so that their right endpoints lie at the origin. By reflecting $\HN(\Ecal)$ and $\HN(\Fcal)$ about the $y$-axis, we obtain the HN polygons $\HN(\Ecal^\vee)$ and $\HN(\Fcal^\vee)$ with their left endpoints at the origin. 


Let us now assert that the condition \ref{rank inequalities for quotients} implies the condition \ref{dual strong slopewise dominance for quotients}. 
Suppose that for some positive integer $i \leq \rk(\Fcal)$ the slope of $\HN(\Fcal)$ on $[-i, -i+1]$ is less than the slope of $\HN(\Ecal)$ on $[-i-1, -i]$. By Lemma \ref{slopewise dominance and rank inequalities} the inequality in the condition \ref{rank inequalities for quotients} is equivalent to slopewise dominance of $\Ecal^\vee$ on $\Fcal^\vee$. Then our observation in the preceding paragraph shows that the slope of $\HN(\Fcal)$ on $[-i, -i+1]$ must be greater than or equal to the slope of $\HN(\Ecal)$ on $[-i, -i+1]$. Therefore our assumption implies that $\HN(\Ecal)$ has a breakpoint at $-i$. Taking $\mu$ to be the slope of $\HN(\Fcal)$ on $[-i, -i+1]$, we find
\[ \rk(\Ecal^{\leq \mu}) \leq i = \rk(\Fcal^{\leq \mu}).\]
Now the condition \ref{rank inequalities for quotients} yields $\rk(\Ecal^{\leq \mu}) = \rk(\Fcal^{\leq \mu}) = j$ and $\Ecal^{\leq \mu} \simeq \Fcal^{\leq \mu}$, thereby implying that $\HN(\Ecal)$ and $\HN(\Fcal)$ must agree on $[-i, 0]$ as the condition \ref{dual strong slopewise dominance for quotients} states.

It remains to prove that the condition \ref{dual strong slopewise dominance for quotients} implies the condition \ref{rank inequalities for quotients}. By our observation in the first paragraph and convexity of HN polygons, the condition \ref{dual strong slopewise dominance for quotients} implies that $\Ecal^\vee$ slopewise dominates $\Fcal^\vee$, and consequently yields the inequality in the condition \ref{rank inequalities for quotients} by Lemma \ref{slopewise dominance and rank inequalities}. Let us now suppose that we have $\rk(\Ecal^{\leq \mu}) = \rk(\Fcal^{\leq \mu})$ for some $\mu \in \Q$. Taking $i = \rk(\Ecal^{\leq \mu}) = \rk(\Fcal^{\leq \mu})$ we make the following observations:
\begin{enumerate}[label=(\alph*)]
\item Both $\HN(\Ecal)$ and $\HN(\Fcal)$ have vertices at $-i$. 
\smallskip

\item The slope of $\HN(\Fcal)$ on $[-i, -i+1]$ is less than or equal to $\mu$. 
\smallskip

\item The slope of $\HN(\Ecal)$ on $[-i-1, -i]$ is greater than $\mu$ unless $i = \rk(\Ecal) = \rk(\Fcal)$. 
\end{enumerate}
Hence the condition \ref{dual strong slopewise dominance for quotients} implies that $\HN(\Ecal)$ and $\HN(\Fcal)$ must agree on $[-i, 0]$, and consequently yields an isomorphism $\Ecal^{\leq \mu} \simeq \Fcal^{\leq \mu}$ as desired. 
\end{proof}


As our next step, we verify the necessity part of Theorem \ref{classification of quotient bundles}. 

\begin{prop}\label{quotient bundles necessary condition}
Given a vector bundle $\Ecal$ on $\adicff$, every quotient bundle $\Fcal$ of $\Ecal$ satisfies the condition \ref{rank inequalities for quotients} of Theorem \ref{classification of quotient bundles}. 
\end{prop}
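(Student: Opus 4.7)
The plan is to show directly that any surjection $\pi: \Ecal \twoheadrightarrow \Fcal$ induces a surjection $\bar{\pi}: \Ecal^{\leq \mu} \twoheadrightarrow \Fcal^{\leq \mu}$ for every $\mu \in \Q$. This immediately yields the rank inequality \ref{rank inequalities for quotients}, and then the equal-rank case \ref{equal rank condition for quotient bundles} follows from the general fact that a surjection of vector bundles of the same rank on a regular one-dimensional scheme is an isomorphism.

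First I would construct $\bar{\pi}$. By Lemma \ref{slope_leq_part via HN decomp}, whenever $\Ecal^{>\mu}$ and $\Fcal^{\leq \mu}$ are both nonzero we have $\mumin(\Ecal^{>\mu}) > \mu \geq \mumax(\Fcal^{\leq \mu})$, so Lemma \ref{zero hom for dominating slopes} forces the composite
\[ \Ecal^{>\mu} \hookrightarrow \Ecal \xrightarrow{\pi} \Fcal \twoheadrightarrow \Fcal^{\leq \mu} \]
to vanish (the cases where one of the two factors is zero being trivial). Hence $\Ecal \to \Fcal^{\leq \mu}$ factors through $\Ecal^{\leq \mu} = \Ecal / \Ecal^{>\mu}$, and the induced map $\bar{\pi}: \Ecal^{\leq \mu} \to \Fcal^{\leq \mu}$ is surjective because the quotient map $\Ecal \twoheadrightarrow \Ecal^{\leq \mu}$ is, and the composite $\Ecal \twoheadrightarrow \Ecal^{\leq \mu} \xrightarrow{\bar{\pi}} \Fcal^{\leq \mu}$ equals the surjection $\Ecal \twoheadrightarrow \Fcal \twoheadrightarrow \Fcal^{\leq \mu}$. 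Surjectivity of $\bar{\pi}$ gives $\rk(\Ecal^{\leq \mu}) \geq \rk(\Fcal^{\leq \mu})$, which is \ref{rank inequalities for quotients}.

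For \ref{equal rank condition for quotient bundles}, suppose equality $\rk(\Ecal^{\leq \mu}) = \rk(\Fcal^{\leq \mu})$ holds. Let $\Kcal$ denote the kernel of $\bar{\pi}$, so that $0 \to \Kcal \to \Ecal^{\leq \mu} \to \Fcal^{\leq \mu} \to 0$ is exact. Since $\schff$ is regular of dimension one and $\Ecal^{\leq \mu}$ is locally free, every coherent subsheaf of $\Ecal^{\leq \mu}$ is torsion-free and therefore locally free; in particular $\Kcal$ is a vector bundle of rank $\rk(\Ecal^{\leq \mu}) - \rk(\Fcal^{\leq \mu}) = 0$, so $\Kcal = 0$ and $\bar{\pi}$ is an isomorphism. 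This yields $\Ecal^{\leq \mu} \simeq \Fcal^{\leq \mu}$, establishing \ref{equal rank condition for quotient bundles}.

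There is no serious obstacle in this argument: all the work lies in observing that the HN filtration behaves functorially with respect to surjections because $\mathrm{Hom}$ vanishes between bundles with strictly separated slope ranges. The only mildly delicate point is the regularity argument in the last step, which could equivalently be phrased in terms of the HN polygon of $\Kcal$ (a potential vector bundle of total rank zero must vanish).
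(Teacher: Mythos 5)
Your proof is correct and follows essentially the same approach as the paper: use Lemma \ref{zero hom for dominating slopes} to see that the composite $\Ecal \surj \Fcal \surj \Fcal^{\leq\mu}$ kills $\Ecal^{>\mu}$ and hence factors through a surjection $\Ecal^{\leq\mu} \surj \Fcal^{\leq\mu}$, then note that a surjection of vector bundles of equal rank has rank-zero kernel and is thus an isomorphism. The only cosmetic difference is that you appeal to regularity of $\schff$ to show the kernel is locally free, whereas this is automatic for any scheme since the kernel of a surjection onto a locally free sheaf is locally a direct summand of $\Ecal^{\leq\mu}$.
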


\begin{proof}
Let $\mu$ be an arbitrary rational number, and consider the decomposition $\Ecal \simeq \Ecal^{\leq \mu} \oplus \Ecal^{>\mu}$. Since any bundle map from $\Ecal^{>\mu}$ to $\Fcal^{\leq \mu}$ must be zero by Lemma \ref{zero hom for dominating slopes}, the composite surjective map $\Ecal \surj \Fcal \surj \Fcal^{\leq \mu}$ should factor through $\Ecal^{\leq \mu}$. We thus find $\rk(\Ecal^{\leq \mu}) \geq \rk(\Fcal^{\leq \mu})$.

Let us now assume that $\rk(\Ecal^{\leq \mu}) = \rk(\Fcal^{\leq \mu})$ for some $\mu \in \Q$. Then the kernel of the surjective map $\Ecal^{\leq \mu} \surj \Fcal^{\leq \mu}$ must be zero since it is a subbundle of $\Ecal^{\leq \mu}$ whose rank is equal to $\rk(\Ecal^{\leq \mu}) - \rk(\Fcal^{\leq \mu}) = 0$. Hence we obtain an isomorphism $\Ecal^{\leq \mu} \simeq \Fcal^{\leq \mu}$, thereby verifying the condition \ref{rank inequalities for quotients}.
\end{proof}

We also note that Proposition \ref{quotient bundles necessary condition} has the following dual statement:
\begin{prop}\label{subbundles necessary condition}
Given a vector bundle $\Ecal$ on $\adicff$, every subsheaf $\Dcal$ of $\Ecal$ satisfies the inequality $\rk(\Ecal^{\geq \mu}) \leq \rk(\Dcal^{\geq \mu})$ for every $\mu \in \Q$. 
\end{prop}

\begin{proof}
Let $\mu$ be an arbitrary rational number, and consider the decomposition $\Ecal = \Ecal^{< \mu} \oplus \Ecal^{\geq \mu}$. Since any bundle map from $\Dcal^{\geq \mu}$ to $\Ecal^{<\mu}$ must be zero by Lemma \ref{zero hom for dominating slopes}, the composite injective map $\Dcal^{\geq \mu} \inj \Dcal \inj \Ecal$ should factor through $\Ecal^{\geq \mu}$. We thus find $\rk(\Ecal^{\geq \mu}) \leq \rk(\Dcal^{\geq \mu})$, which is equivalent to the desired inequality by Lemma \ref{rank and degree of dual bundle}. 
\end{proof}

By Proposition \ref{classification of quotient bundles by slopewise dominance} and Proposition \ref{quotient bundles necessary condition}, it remains to prove the sufficiency of the condition \ref{rank inequalities for quotients} in Theorem \ref{classification of quotient bundles}. 
For this, we have the following elementary and important reduction:
\begin{lemma}\label{reduction on common slopes}
We may prove the sufficiency of the condition \ref{rank inequalities for quotients} in Theorem \ref{classification of quotient bundles} under the assumption that $\Ecal$ and $\Fcal$ have no common slopes. 
\end{lemma}

\begin{proof}
Let $\Ecal$ and $\Fcal$ be vector bundles on $\adicff$ satisfying the condition \ref{rank inequalities for quotients} in Theorem \ref{classification of quotient bundles}. By their HN decompositions, we may write
\begin{equation}\label{common slopes decomp}
\Ecal \simeq \Gcal \oplus \commonslopered{\Ecal} \quad\quad \text{ and } \quad\quad \Fcal \simeq \Gcal \oplus \commonslopered{\Fcal}
\end{equation}
for some vector bundles $\commonslopered{\Ecal}$ and $\commonslopered{\Fcal}$ with no common slopes. 

Let $\mu$ be an arbitrary rational number. We assert that $\rk(\commonslopered{\Ecal}^{\leq \mu}) \geq \rk(\commonslopered{\Fcal}^{\leq \mu})$ with equality if only if $\commonslopered{\Ecal}^{\leq \mu}$ is isomorphic to $\commonslopered{\Fcal}^{\leq \mu}$. By the decompositions in \eqref{common slopes decomp} we have
\begin{equation}\label{common slopes decomp with bounded slopes}
\Ecal^{\leq \mu} \simeq \Gcal^{\leq \mu} \oplus \commonslopered{\Ecal}^{\leq \mu} \quad\quad \text{ and } \quad\quad \Fcal^{\leq \mu} \simeq \Gcal^{\leq \mu} \oplus \commonslopered{\Fcal}^{\leq \mu},
\end{equation}
which consequently yield
\begin{equation}\label{rank relations for common slopes decomp with bounded slopes}
\rk(\Ecal^{\leq \mu}) = \rk(\Gcal^{\leq \mu}) + \rk(\commonslopered{\Ecal}^{\leq \mu}) \quad\quad \text{ and } \quad\quad \rk(\Fcal^{\leq \mu}) = \rk(\Gcal^{\leq \mu}) + \rk(\commonslopered{\Fcal}^{\leq \mu}).
\end{equation}
We thus deduce the desired inequality $\rk(\commonslopered{\Ecal}^{\leq \mu}) \geq \rk(\commonslopered{\Fcal}^{\leq \mu})$ from the corresponding inequality $\rk({\Ecal}^{\leq \mu}) \geq \rk({\Fcal}^{\leq \mu})$ for $\Ecal$ and $\Fcal$. Moreover, if we have $\rk(\commonslopered{\Ecal}^{\leq \mu}) = \rk(\commonslopered{\Fcal}^{\leq \mu})$, then by \eqref{rank relations for common slopes decomp with bounded slopes} we find $\rk(\Ecal^{\leq \mu}) = \rk(\Fcal^{\leq \mu})$, which in turn yields an isomorphism $\Ecal^{\leq \mu} \simeq \Fcal^{\leq \mu}$ and consequently implies by \eqref{common slopes decomp with bounded slopes} that $\commonslopered{\Ecal}^{\leq \mu}$ is isomorphic to $\commonslopered{\Fcal}^{\leq \mu}$. 

Now we observe from \eqref{common slopes decomp} that a surjective bundle map $\commonslopered{\Ecal} \surj \commonslopered{\Fcal}$ gives rise to a surjective bundle map $\Ecal \surj \Fcal$ by direct summing with the identity map for $\Gcal$. Hence our discussion in the preceding paragraph implies that we can prove the sufficiency of the condition \ref{rank inequalities for quotients} in Theorem \ref{classification of quotient bundles} after replacing $\Ecal$ and $\Fcal$ by $\commonslopered{\Ecal}$ and $\commonslopered{\Fcal}$, yielding our desired reduction as $\commonslopered{\Ecal}$ and $\commonslopered{\Fcal}$ have no common slopes by construction. 
\end{proof}

\begin{remark}
After our reduction in Lemma \ref{reduction on common slopes}, the rank inequality of the condition \ref{rank inequalities for quotients} in Theorem \ref{classification of quotient bundles} becomes essentially strict. In fact, if $\Ecal$ and $\Fcal$ have no common slopes, $\Ecal^{\leq \mu}$ is isomorphic to  $\Fcal^{\leq \mu}$ only holds for $\mu \in \Q$ with $\Ecal^{\leq \mu} = \Fcal^{\leq \mu} = 0$. 
\end{remark}

We now state our key inequality for proving the sufficiency of the condition \ref{rank inequalities for quotients} in Theorem \ref{classification of quotient bundles}. 

\begin{prop}\label{key inequality}
Let $\Ecal$, $\Fcal$ and $\Qcal$ be vector bundles on $\adicff$ with the following properties:
\begin{enumerate}[label=(\roman*)]
\item\label{slope condition on E and F} $\rk(\Ecal^{\leq \mu}) \geq \rk(\Fcal^{\leq \mu})$ for every $\mu \in \Q$ with equality only when $\Ecal^{\leq \mu} \simeq \Fcal^{\leq \mu}$. 

\item\label{slope condition on E and Q} $\rk(\Ecal^{\leq \mu}) \geq \rk(\Qcal^{\leq \mu})$ for every $\mu \in \Q$ with equality only when $\Ecal^{\leq \mu} \simeq \Qcal^{\leq \mu}$. 

\item\label{slope condition on F and Q} $\rk(\Fcal^{\geq \mu}) \geq \rk(\Qcal^{\geq \mu})$ for every $\mu \in \Q$. 

\item\label{no common slopes for E and F} $\Ecal$ and $\Fcal$ have no common slopes. 
\end{enumerate}
Then we have an inequality
\begin{equation}\label{deg inequality for surj}
\deg(\Ecal^\vee \otimes \Qcal)^\nonneg + \deg(\Qcal^\vee \otimes \Fcal)^\nonneg \leq \deg(\Ecal^\vee \otimes \Fcal)^\nonneg + \deg(\Qcal^\vee \otimes \Qcal)^\nonneg
\end{equation}
with equality if and only if $\Qcal = \Fcal$. 
\end{prop}

\begin{example}\label{example showing necessity of no common slopes condition}
We present an example which shows that our reduction in Lemma \ref{reduction on common slopes} is crucial for the formulation of Proposition \ref{key inequality}. Take $\Ecal = \trivbundle^{\oplus 3}, \Fcal = \trivbundle^{\oplus 2}$ and $\Qcal = \trivbundle$. Note that our choice does not satisfy the property \ref{no common slopes for E and F}. However, we check the other properties \ref{slope condition on E and F}, \ref{slope condition on E and Q} and \ref{slope condition on F and Q} by Proposition \ref{quotient bundles necessary condition} and Proposition \ref{subbundles necessary condition} after observing that $\Fcal$ and $\Qcal$ are quotient bundles of $\Ecal$ while $\Qcal$ is a subbundle of $\Fcal$. We now observe that all terms in \eqref{deg inequality for surj} are zero, thereby obtaining an equality even though $\Qcal \neq \Fcal$. 
Hence the equality condition in Proposition \ref{key inequality} can be broken when $\Ecal$ and $\Fcal$ have common slopes. 
\end{example}

We will prove Proposition \ref{key inequality} in \S\ref{proof of key inequality}. Here we explain why establishing Proposition \ref{key inequality} finishes the proof of Theorem \ref{classification of quotient bundles}. 

\begin{prop}
Proposition \ref{key inequality} implies the sufficiency of the condition \ref{rank inequalities for quotients} in Theorem \ref{classification of quotient bundles}. 
\end{prop}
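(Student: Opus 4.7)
The plan is to deduce the sufficiency from Proposition \ref{dimension inequality for surj maps} by verifying its two hypotheses, using Proposition \ref{key inequality} to obtain the required strict degree inequality.

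First, by Lemma \ref{reduction on min slopes}, I may reduce to the situation where $\mumin(\Fcal) > \mumin(\Ecal)$. I may also assume $\Fcal \neq 0$, since otherwise the zero map $\Ecal \surj 0$ trivially furnishes the required surjection. It then suffices to produce a surjective bundle map $\Ecal \surj \Fcal$ via Proposition \ref{dimension inequality for surj maps}, whose two hypotheses I will check in turn.

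For hypothesis \ref{existence of nonzero bundle map from E to F}, I would argue that the condition \ref{rank inequalities for quotients} in Theorem \ref{classification of quotient bundles}, combined with Lemma \ref{slopewise dominance and rank inequalities}, implies that $\Ecal^\vee$ slopewise dominates $\Fcal^\vee$. Comparing leading slopes of the two HN polygons (and noting that both polygons have nonempty left-most edge since $\Fcal \neq 0$) yields $\mumax(\Ecal^\vee) \geq \mumax(\Fcal^\vee)$, that is, $\mumin(\Ecal) \leq \mumin(\Fcal) \leq \mumax(\Fcal)$. Lemma \ref{zero hom for dominating slopes} then guarantees a nonzero bundle map $\Ecal \to \Fcal$.

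For hypothesis \ref{positive codim for Hom minus surj}, let $\Qcal \subsetneq \Fcal$ be any proper subbundle that also occurs as a quotient bundle of $\Ecal$. I claim that the four hypotheses of Proposition \ref{key inequality} hold for the triple $(\Ecal, \Fcal, \Qcal)$: condition \ref{slope condition on E and F} is our standing assumption on $\Ecal$ and $\Fcal$; condition \ref{slope condition on E and Q} follows from Proposition \ref{quotient bundles necessary condition} applied to the quotient $\Ecal \surj \Qcal$; condition \ref{slope condition on F and Q} follows from Proposition \ref{subbundles necessary condition} applied to the subbundle $\Qcal \hookrightarrow \Fcal$; and condition \ref{min slope condition on E and F} is exactly the reduction from Lemma \ref{reduction on min slopes}. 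Proposition \ref{key inequality} now delivers the inequality
\[\deg(\Ecal^\vee \otimes \Qcal)^\nonneg + \deg(\Qcal^\vee \otimes \Fcal)^\nonneg \leq \deg(\Ecal^\vee \otimes \Fcal)^\nonneg + \deg(\Qcal^\vee \otimes \Qcal)^\nonneg,\]
with equality only when $\Qcal = \Fcal$; since $\Qcal \subsetneq \Fcal$, the inequality is strict, which is precisely hypothesis \ref{positive codim for Hom minus surj} of Proposition \ref{dimension inequality for surj maps}. Applying that proposition produces the desired surjection $\Ecal \surj \Fcal$.

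The genuine obstacle in this strategy lies entirely outside the present reduction: it is the proof of Proposition \ref{key inequality} itself. Indeed, as Example \ref{example showing necessity of min slope condition} already shows, the equality clause in Proposition \ref{key inequality} is delicate and fails without the minimum-slope separation, so the interplay between the reduction step (Lemma \ref{reduction on min slopes}) and the degree bookkeeping of Proposition \ref{key inequality} is essential. Here, once that inequality is granted, the present deduction is a matter of collecting the compatibilities I have listed above.
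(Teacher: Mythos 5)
Your proposal is correct and follows essentially the same route as the paper: reduce to $\mumin(\Ecal)<\mumin(\Fcal)$ via Lemma \ref{reduction on min slopes}, check the two hypotheses of Proposition \ref{dimension inequality for surj maps} (the first via Lemma \ref{zero hom for dominating slopes}, the second via Propositions \ref{quotient bundles necessary condition} and \ref{subbundles necessary condition} feeding into Proposition \ref{key inequality}), and conclude. The one small inefficiency is your verification of hypothesis \ref{existence of nonzero bundle map from E to F}: you go through slopewise dominance of $\Ecal^\vee$ on $\Fcal^\vee$ to obtain the non-strict inequality $\mumin(\Ecal)\leq\mumin(\Fcal)$, whereas after the reduction you already hold the strict inequality $\mumin(\Ecal)<\mumin(\Fcal)\leq\mumax(\Fcal)$, from which Lemma \ref{zero hom for dominating slopes} gives a nonzero map directly, with no need to invoke Lemma \ref{slopewise dominance and rank inequalities}.
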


\begin{proof}
Let $\Ecal$ and $\Fcal$ be vector bundles on $\adicff$ satisfying the condition \ref{rank inequalities for quotients} in Theorem \ref{classification of quotient bundles}. We further assume that $\Ecal$ and $\Fcal$ have no common slopes in light of Lemma \ref{reduction on common slopes}. We wish to prove existence of a surjective bundle map $\Ecal \surj \Fcal$ assuming Proposition \ref{key inequality}. For this, it suffices to check that $\Ecal$ and $\Fcal$ satisfy the assumptions \ref{existence of nonzero bundle map from E to F} and \ref{positive codim for Hom minus surj} in Proposition \ref{dimension inequality for surj maps}. 

We first check the assumption \ref{existence of nonzero bundle map from E to F} in Proposition \ref{dimension inequality for surj maps} for $\Ecal$ and $\Fcal$. By Lemma \ref{zero hom for dominating slopes}, it is enough to prove $\mumin(\Ecal) \leq \mumax(\Fcal)$.
We verify this by observing
\[\rk(\Ecal^{\leq \mumax(\Fcal)}) \geq \rk(\Fcal^{\leq \mumax(\Fcal)}) = \rk(\Fcal) >0.\]



It remains to check the assumption \ref{positive codim for Hom minus surj} in Proposition \ref{dimension inequality for surj maps} for $\Ecal$ and $\Fcal$. 
Let $\Qcal$ be an arbitrary proper subsheaf of $\Fcal$ which also occurs as a quotient of $\Ecal$. Then $\Ecal, \Fcal$ and $\Qcal$ satisfy the assumptions of Proposition \ref{key inequality} by Proposition \ref{quotient bundles necessary condition}, Proposition \ref{subbundles necessary condition} and our assumption on $\Ecal$ and $\Fcal$. 
Since we have $\Qcal \neq \Fcal$, Proposition \ref{key inequality} now yields a strict inequality 
\[ \deg(\Ecal^\vee \otimes \Qcal)^\nonneg + \deg(\Qcal^\vee \otimes \Fcal)^\nonneg < \deg(\Ecal^\vee \otimes \Fcal)^\nonneg + \deg(\Qcal^\vee \otimes \Qcal)^\nonneg\]
as required by the assumption \ref{positive codim for Hom minus surj} in Proposition \ref{dimension inequality for surj maps}.
\end{proof}

\subsection{Proof of the key inequality}\label{proof of key inequality}$ $

We now aim to establish Proposition \ref{key inequality}. 
\begin{defn}\label{definition of cEF(Q)}
For arbitrary vector bundles $\Ecal, \Fcal$ and $\Qcal$ on $\adicff$, we define
\[c_{\Ecal, \Fcal}(\Qcal) := \deg(\Ecal^\vee \otimes \Fcal)^\nonneg + \deg(\Qcal^\vee \otimes \Qcal)^\nonneg - \deg(\Ecal^\vee \otimes \Qcal)^\nonneg -\deg(\Qcal^\vee \otimes \Fcal)^\nonneg.\]
\end{defn}

\begin{remark}
The inequality \eqref{deg inequality for surj} in Proposition \ref{key inequality} can be stated as $c_{\Ecal, \Fcal}(\Qcal) \geq 0$.
\end{remark}

Our proof of Proposition \ref{key inequality} will consist of a series of reduction steps as follows:
\begin{enumerate}[label=Step \arabic*., leftmargin=5.3em]
\item\label{reduction to integer slopes} We reduce the proof to the case where all slopes of $\Ecal, \Fcal$ and $\Qcal$ are integers. 

\item\label{reduction to equal ranks} We further reduce the proof to the case $\rk(\Qcal) = \rk(\Fcal)$. 

\item\label{reduction to equal slopes} After these reductions, we complete the proof by gradually ``reducing" the slopes of $\Fcal$ to the slopes of $\Qcal$.  
\end{enumerate}
Throughout these reduction steps, we will establish the following key facts:
\begin{enumerate}[label=(\arabic*)]
\item\label{decreasing codimension over the entire reduction process} The quantity $c_{\Ecal, \Fcal}(\Qcal)$ monotone decreases to $0$ as we reduce $\rk(\Fcal)$ to $\rk(\Qcal)$ and the slopes of $\Fcal$ to the slopes of $\Qcal$. 

\item\label{equality condition for nonequal rank case} When $\rk(\Qcal)<\rk(\Fcal)$, the equality $c_{\Ecal, \Fcal}(\Qcal) = 0$ never holds. 

\item\label{equality condition for equal rank case} When $\rk(\Qcal) = \rk(\Fcal)$, the equality $c_{\Ecal, \Fcal}(\Qcal) = 0$ holds only when $\Qcal = \Fcal$. 
\end{enumerate}
We will then obtain the desired inequality $c_{\Ecal, \Fcal}(\Qcal) \geq 0$ from the first fact and the equality condition $\Qcal = \Fcal$ from the second and the third facts.

\begin{remark} For curious readers, we 
briefly describe how each assumption in Proposition \ref{key inequality} will be used to establish the key facts \ref{decreasing codimension over the entire reduction process}, \ref{equality condition for nonequal rank case} and \ref{equality condition for equal rank case} above. 


The fact \ref{decreasing codimension over the entire reduction process} relies on the rank inequalities 
from the assumptions \ref{slope condition on E and F}, \ref{slope condition on E and Q} and \ref{slope condition on F and Q} of Proposition \ref{key inequality}. As we will see in Lemma \ref{slopewise dominance relations for key inequality}, these rank inequalities can be interpreted as slopewise dominance relations between the vector bundles $\Ecal, \Fcal$ and $\Qcal$. We will use those relations to ``gradually reduce" $\HN(\Fcal)$ to $\HN(\Qcal)$ in a way that $c_{\Ecal, \Fcal}(\Qcal)$ always decreases. 

The fact \ref{equality condition for nonequal rank case} is essentially a consequence of the assumption \ref{no common slopes for E and F} of Proposition \ref{key inequality}. In Proposition \ref{key inequality reduction for equal rank}, we will use this assumption to prove that $c_{\Ecal, \Fcal}(\Qcal)$ strictly decreases while we reduce the rank of $\Fcal$ by cutting down $\HN(\Fcal)$ from the right  (cf. Example \ref{example showing necessity of no common slopes condition}). 

The fact \ref{equality condition for equal rank case} comes from the assumption \ref{no common slopes for E and F} along with the equality conditions 
in the assumptions \ref{slope condition on E and F} and \ref{slope condition on E and Q} of Proposition \ref{key inequality}. As we will see in Lemma \ref{decreasing c_EF(Q) after max reduction}, these assumptions ensure that $c_{\Ecal, \Fcal}(\Qcal)$ strictly decreases during the first reduction cycle in Step 3. 
\end{remark}




Before proceeding to our reduction steps, let us make some useful observations about the assumptions of Proposition \ref{key inequality}. 

\begin{lemma}\label{slopewise dominance relations for key inequality}
Let $\Ecal, \Fcal$ and $\Qcal$ be as in the statement of Proposition \ref{key inequality}. Then we have the following slopewise dominance relations:
\begin{enumerate}[label=(\arabic*)]
\item $\Ecal^\vee$ slopewise dominates $\Fcal^\vee$.

\item $\Ecal^\vee$ slopewise dominates $\Qcal^\vee$.

\item $\Fcal$ slopewise dominates $\Qcal$. 
\end{enumerate}
\end{lemma}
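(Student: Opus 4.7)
The plan is to obtain each of the three slopewise dominance relations as a direct translation of the corresponding rank inequality in the hypotheses of Proposition \ref{key inequality}, using Lemma \ref{slopewise dominance and rank inequalities} as the dictionary between rank inequalities on truncations and slopewise dominance.

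First I would recall from Lemma \ref{slopewise dominance and rank inequalities} the following two equivalences valid for any vector bundles $\Vcal$ and $\Wcal$ on $\adicff$: the bundle $\Vcal$ slopewise dominates $\Wcal$ if and only if $\rk(\Vcal^{\geq \mu}) \geq \rk(\Wcal^{\geq \mu})$ for every $\mu \in \Q$, and $\Vcal^\vee$ slopewise dominates $\Wcal^\vee$ if and only if $\rk(\Vcal^{\leq \mu}) \geq \rk(\Wcal^{\leq \mu})$ for every $\mu \in \Q$.

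Then the three claims follow immediately. For (1), the hypothesis \ref{slope condition on E and F} in Proposition \ref{key inequality} gives $\rk(\Ecal^{\leq \mu}) \geq \rk(\Fcal^{\leq \mu})$ for every $\mu \in \Q$, so by the second equivalence above $\Ecal^\vee$ slopewise dominates $\Fcal^\vee$. For (2), the hypothesis \ref{slope condition on E and Q} gives $\rk(\Ecal^{\leq \mu}) \geq \rk(\Qcal^{\leq \mu})$ for every $\mu \in \Q$, so again by the second equivalence $\Ecal^\vee$ slopewise dominates $\Qcal^\vee$. For (3), the hypothesis \ref{slope condition on F and Q} gives $\rk(\Fcal^{\geq \mu}) \geq \rk(\Qcal^{\geq \mu})$ for every $\mu \in \Q$, so by the first equivalence $\Fcal$ slopewise dominates $\Qcal$.

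There is essentially no obstacle here; the content of the lemma is purely bookkeeping, matching each of the three rank inequalities in the hypotheses of Proposition \ref{key inequality} with the appropriate half of Lemma \ref{slopewise dominance and rank inequalities}. The only thing to notice is that the equality clauses in \ref{slope condition on E and F} and \ref{slope condition on E and Q} and the assumption \ref{min slope condition on E and F} play no role in this particular lemma; they will instead be used later, in the proof of Proposition \ref{key inequality} itself, to control the equality case $c_{\Ecal,\Fcal}(\Qcal) = 0$.
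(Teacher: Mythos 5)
Your proof is correct and matches the paper's argument exactly: the paper likewise deduces each of the three slopewise dominance relations by applying Lemma \ref{slopewise dominance and rank inequalities} to the corresponding rank inequality in the hypotheses of Proposition \ref{key inequality}. Nothing more is needed.
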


\begin{proof}
By Lemma \ref{slopewise dominance and rank inequalities}, these slopewise dominance relations follow from the rank inequalities in the assumptions \ref{slope condition on E and F}, \ref{slope condition on E and Q} and \ref{slope condition on F and Q} of Proposition \ref{key inequality}.
\end{proof}

\begin{remark} Based on Lemma \ref{slopewise dominance relations for key inequality}, we can give an intuitive explanation on how our reduction steps work. Let us rewrite $c_{\Ecal, \Fcal}(\Qcal)$ as 
\[ c_{\Ecal, \Fcal}(\Qcal):= \deg((\Fcal^\vee)^\vee \otimes \Ecal^\vee)^\nonneg + \deg(\Qcal^\vee \otimes \Qcal)^\nonneg - \deg((\Qcal^\vee)^\vee \otimes \Ecal^\vee)^\nonneg - \deg(\Qcal^\vee \otimes \Fcal)^\nonneg.\]
Then by Lemma \ref{slopewise dominance relations for key inequality} every term on the right side is of the form $\deg(\Vcal^\vee \otimes \Wcal)^\nonneg$ where $\Wcal$ slopewise dominates $\Vcal$. 
During our reduction steps, we will utilize the slopewise dominance relations as stated in Lemma \ref{slopewise dominance relations for key inequality} so that all terms in $c_{\Ecal, \Fcal}(\Qcal)$ behave as we want.  
\end{remark}


\begin{lemma}\label{assumptions of key inequality after vertical stretch}
Let $\Ecal, \Fcal$ and $\Qcal$ be as in the statement of Proposition \ref{key inequality}. Choose a positive integer $C$, and let $\vertstretch{\Ecal}, \vertstretch{\Fcal}$ and $\vertstretch{\Qcal}$ be vector bundles on $\adicff$ whose HN polygons are obtained by vertically stretching $\HN(\Ecal), \HN(\Fcal)$ and $\HN(\Qcal)$ by a factor $C$. Then we have the following properties of $\vertstretch{\Ecal}, \vertstretch{\Fcal}$ and $\vertstretch{\Qcal}$. 
\begin{enumerate}[label=(\roman*)]
\item\label{slope condition on E and F after vertical stretch} $\rk(\vertstretch{\Ecal}^{\leq \mu}) \geq \rk(\vertstretch{\Fcal}^{\leq \mu})$ for every $\mu \in \Q$ with equality only when $\vertstretch{\Ecal}^{\leq \mu} \simeq \vertstretch{\Fcal}^{\leq \mu}$. 
\smallskip

\item\label{slope condition on E and Q after vertical stretch} $\rk(\vertstretch{\Ecal}^{\leq \mu}) \geq \rk(\vertstretch{\Qcal}^{\leq \mu})$ for every $\mu \in \Q$ with equality only when $\vertstretch{\Ecal}^{\leq \mu} \simeq \vertstretch{\Qcal}^{\leq \mu}$. 
\smallskip

\item\label{slope condition on F and Q after vertical stretch} $\rk(\vertstretch{\Fcal}^{\geq \mu}) \geq \rk(\vertstretch{\Qcal}^{\geq \mu})$ for every $\mu \in \Q$. 
\smallskip

\item\label{no common slopes for E and F after vertical stretch} $\vertstretch{\Ecal}$ and $\vertstretch{\Fcal}$ have no common slopes. 
\smallskip
\end{enumerate}
\end{lemma}

\begin{proof}
By construction, we have the following facts:
\begin{enumerate}[label = (\alph*)]
\item\label{vertical stretch slopewise rank} For $\Vcal = \Ecal, \Fcal$ and $\Qcal$, we have $\rk(\vertstretch{\Vcal}^{\leq \mu}) = \rk(\Vcal^{\leq \mu/C})$ and $\rk(\vertstretch{\Vcal}^{\geq \mu}) = \rk(\Vcal^{\geq \mu/C})$ for every $\mu \in \Q$. 
\smallskip

\item\label{vertical stretch equal rank condition} For $\Wcal = \Fcal$ and $\Qcal$, we have $\vertstretch{\Ecal}^{\leq \mu} \simeq \vertstretch{\Wcal^{\leq \mu}}$
if $\Ecal^{\leq \mu/C} \simeq \Wcal^{\leq \mu/C}$. 
\smallskip

\item\label{vertical stretch slopes} The slopes of $\vertstretch{\Ecal}$ and $\vertstretch{\Fcal}$ are given by multiplying the slopes of $\Ecal$ and $\Fcal$ by $C$. 
\end{enumerate}
Hence we deduce the properties \ref{slope condition on E and F after vertical stretch} - \ref{no common slopes for E and F after vertical stretch} from the corresponding properties of $\Ecal, \Fcal$ and $\Qcal$. 
\end{proof}

\begin{lemma}\label{assumptions of key inequality after shear}
Let $\Ecal, \Fcal$ and $\Qcal$ be as in the statement of Proposition \ref{key inequality}. For any integer $\lambda$, the vector bundles $\Ecal(\lambda), \Fcal(\lambda)$ and $\Qcal(\lambda)$ satisfy the following properties:
\begin{enumerate}[label=(\roman*)]
\item\label{slope condition on E and F after shear} $\rk(\Ecal(\lambda)^{\leq \mu}) \geq \rk(\Fcal(\lambda)^{\leq \mu})$ for every $\mu \in \Q$ with equality only when $\Ecal(\lambda)^{\leq \mu} \simeq \Fcal(\lambda)^{\leq \mu}$. 
\smallskip

\item\label{slope condition on E and Q after shear} $\rk(\Ecal(\lambda)^{\leq \mu}) \geq \rk(\Qcal(\lambda)^{\leq \mu})$ for every $\mu \in \Q$ with equality only when $\Ecal(\lambda)^{\leq \mu} \simeq \Qcal(\lambda)^{\leq \mu}$. 
\smallskip

\item\label{slope condition on F and Q after vertical stretch} $\rk(\Fcal(\lambda)^{\geq \mu}) \geq \rk(\Qcal(\lambda)^{\geq \mu})$ for every $\mu \in \Q$. 
\smallskip

\item\label{no common slopes for E and F after shear} $\Ecal(\lambda)$ and $\Fcal(\lambda)$ have no common slopes 
\smallskip
\end{enumerate}
\end{lemma}

\begin{proof}
Since the vector bundle $\trivbundle(\lambda)$ has rank $1$ and degree $\lambda$, tensoring a vector bundle with $\trivbundle(\lambda)$ is the same as adding $\lambda$ to all slopes. Therefore we have the following observations:
\begin{enumerate}[label = (\alph*)]

\item\label{shear slopewise rank} For $\Vcal = \Ecal, \Fcal$ or $\Qcal$, we have $\rk(\Vcal(\lambda)^{\leq \mu}) = \rk(\Vcal^{\leq \mu-\lambda})$ and $\rk(\Vcal(\lambda)^{\geq \mu}) = \rk(\Vcal^{\geq \mu-\lambda})$ for every $\mu \in \Q$. 
\smallskip

\item\label{shear equal rank condition} For $\Wcal = \Fcal$ and $\Qcal$, we have $\Ecal(\lambda)^{\leq \mu} \simeq \Wcal(\lambda)^{\leq \mu}$ if $\Ecal^{\leq \mu-\lambda} \simeq \Wcal^{\leq \mu-\lambda}$. 
\smallskip

\item\label{shear slopes} The slopes of $\Ecal(\lambda)$ and $\Fcal(\lambda)$ are given by adding $\lambda$ to the slopes of $\Ecal$ and $\Fcal$. 
\end{enumerate}
We thus deduce the properties \ref{slope condition on E and F after shear} - \ref{no common slopes for E and F after vertical stretch} from the corresponding properties of $\Ecal, \Fcal$ and $\Qcal$. 
\end{proof}

We are now ready to carry out Step 1 and Step 2. 
\begin{prop}\label{key inequality reduction to integer slopes}
To prove Proposition \ref{key inequality}, we may assume that all slopes of $\Ecal$, $\Fcal$ and $\Qcal$ are integers. 
\end{prop}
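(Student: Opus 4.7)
The plan is to clear denominators by a single simultaneous vertical stretch of the Harder-Narasimhan polygons of $\Ecal$, $\Fcal$, and $\Qcal$. Since each bundle has only finitely many HN slopes, I would let $C$ be the least common multiple of the denominators (in lowest terms) of all slopes appearing in $\Ecal$, $\Fcal$, and $\Qcal$. Applying Theorem \ref{classification by HN polygon}, I would then define $\vertstretch{\Ecal}$, $\vertstretch{\Fcal}$, $\vertstretch{\Qcal}$ to be the vector bundles whose HN polygons are obtained by vertically stretching $\HN(\Ecal)$, $\HN(\Fcal)$, $\HN(\Qcal)$ by the factor $C$. By construction every slope of each stretched bundle is an integer.

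Next I would transport the hypotheses and the conclusion along this stretch. The hypotheses \ref{slope condition on E and F}--\ref{min slope condition on E and F} carry over to $(\vertstretch{\Ecal}, \vertstretch{\Fcal}, \vertstretch{\Qcal})$ by Lemma \ref{assumptions of key inequality after vertical stretch}, so Proposition \ref{key inequality} is an admissible statement to apply to the stretched triple. On the other hand, Lemma \ref{degree after stretch} shows that each of the four summands in Definition \ref{definition of cEF(Q)} scales uniformly by the factor $C$ under vertical stretching, so
\[
c_{\vertstretch{\Ecal}, \vertstretch{\Fcal}}(\vertstretch{\Qcal}) = C \cdot c_{\Ecal, \Fcal}(\Qcal).
\]
Because $C$ is a positive integer, the inequality $c_{\Ecal, \Fcal}(\Qcal) \geq 0$ holds if and only if $c_{\vertstretch{\Ecal}, \vertstretch{\Fcal}}(\vertstretch{\Qcal}) \geq 0$, and equality in either version is equivalent to equality in the other.

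Finally, I would deal with the equality clause. Since a vector bundle on $\adicff$ is determined up to isomorphism by its HN polygon (Theorem \ref{classification by HN polygon}) and the vertical stretch by $C$ is a bijection on HN polygons, we have $\vertstretch{\Qcal} \simeq \vertstretch{\Fcal}$ if and only if $\Qcal \simeq \Fcal$. Combining these observations, the conclusion of Proposition \ref{key inequality} for the triple $(\vertstretch{\Ecal}, \vertstretch{\Fcal}, \vertstretch{\Qcal})$ is logically equivalent to the same conclusion for $(\Ecal, \Fcal, \Qcal)$, yielding the desired reduction.

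I do not anticipate a genuine obstacle here: the argument is a purely formal rescaling in which every ingredient has already been prepared in Section \ref{Geometric interpretation of degrees} and Section \ref{reformulation of statement}. The only modest point to verify carefully is that the uniform scaling factor $C$ in Lemma \ref{degree after stretch} is independent of which pair of bundles we plug in, so that all four terms in $c_{\Ecal,\Fcal}(\Qcal)$ scale by the \emph{same} $C$; this is immediate from the statement of that lemma.
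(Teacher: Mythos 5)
Your proposal is correct and follows essentially the same route as the paper: take $C$ to be the least common multiple of the slope denominators, stretch vertically by $C$, invoke Lemma \ref{degree after stretch} to get $c_{\vertstretch{\Ecal}, \vertstretch{\Fcal}}(\vertstretch{\Qcal}) = C \cdot c_{\Ecal, \Fcal}(\Qcal)$, and invoke Lemma \ref{assumptions of key inequality after vertical stretch} to transport the hypotheses. Your extra observation that the equality clause $\Qcal \simeq \Fcal$ transfers both ways via Theorem \ref{classification by HN polygon} matches the paper's treatment of the equality condition.
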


\begin{proof}
Let $\Ecal, \Fcal$ and $\Qcal$ be as in the statement of Proposition \ref{key inequality}. Take $C$ to be the least common multiple of all denominators of the slopes of $\Ecal, \Fcal$ and $\Qcal$, and let $\vertstretch{\Ecal}, \vertstretch{\Fcal}$ and $\vertstretch{\Qcal}$ be vector bundles on $\adicff$ whose HN polygons are obtained by vertically stretching $\HN(\Ecal), \HN(\Fcal)$ and $\HN(\Qcal)$ by a factor $C$. Note that all slopes of $\vertstretch{\Ecal}, \vertstretch{\Fcal}$ and $\vertstretch{\Qcal}$ are integers by construction. We now use Lemma \ref{degree after stretch} to obtain an identity 
\[c_{\vertstretch{\Ecal}, \vertstretch{\Fcal}}(\vertstretch{\Qcal}) = C \cdot c_{\Ecal, \Fcal}(\Qcal)\] 
which implies that the inequality \eqref{deg inequality for surj} for $\Ecal, \Fcal$ and $\Qcal$ follows from the corresponding inequality for $\vertstretch{\Ecal}, \vertstretch{\Fcal}$ and $\vertstretch{\Qcal}$.  In addition, our construction translates the equality condition $\Qcal = \Fcal$ for the former inequality to the equality condition $\vertstretch{\Qcal} = \vertstretch{\Fcal}$ for the latter inequality. Now Lemma \ref{assumptions of key inequality after vertical stretch} implies that we may prove Proposition \ref{key inequality} after replacing $\Ecal, \Fcal$ and $\Qcal$ by $\vertstretch{\Ecal}, \vertstretch{\Fcal}$ and $\vertstretch{\Qcal}$, thereby yielding our desired reduction. 
\end{proof}

\begin{prop}\label{key inequality reduction for equal rank}
It suffices to prove Proposition \ref{key inequality} under the additional assumptions that $\rk(\Qcal) = \rk(\Fcal)$ and that all slopes of $\Ecal, \Fcal$ and $\Qcal$ are integers. 
\end{prop}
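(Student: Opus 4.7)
The plan is to replace $\Fcal$ by its top-slopes portion $\Fcal^\flat$ of rank $r = \rk(\Qcal)$ and deduce the general case from the equal-rank case. By Proposition \ref{key inequality reduction to integer slopes} we may and do assume all slopes of $\Ecal, \Fcal, \Qcal$ are integers, so $\Fcal \simeq \bigoplus_{i=1}^{N} \trivbundle(\lambda_i)$ with $\lambda_1 \geq \dots \geq \lambda_N$. Setting $\Fcal^\flat := \bigoplus_{i \leq r} \trivbundle(\lambda_i)$ and $\Fcal^\sharp := \bigoplus_{i > r} \trivbundle(\lambda_i)$, so that $\Fcal \simeq \Fcal^\flat \oplus \Fcal^\sharp$ and $\rk(\Fcal^\flat) = \rk(\Qcal)$, I would first check that the triple $(\Ecal, \Fcal^\flat, \Qcal)$ still satisfies all four hypotheses of Proposition \ref{key inequality}. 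Hypothesis (ii) does not involve $\Fcal$ and is unchanged; (iv) holds since $\mumin(\Fcal^\flat) \geq \mumin(\Fcal) > \mumin(\Ecal)$; (iii) follows because the slopewise-dominance property of $\Fcal$ over $\Qcal$ only uses its top $r$ slopes, which coincide with those of $\Fcal^\flat$; and (i) follows since truncating to $\Fcal^\flat$ can only lessen the ``$\leq \mu$''-ranks, with any new equality case tracing back to an equality for $\Fcal$ in which all slopes of $\Fcal^{\leq \mu}$ already live in $\Fcal^\flat$, giving the required isomorphism via the original hypothesis.

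Bi-additivity of $\deg(\cdot)^\nonneg$ under direct sums then yields a clean decomposition
\[
c_{\Ecal, \Fcal}(\Qcal) = c_{\Ecal, \Fcal^\flat}(\Qcal) + \Delta, \qquad \Delta := \sum_{i = r+1}^{N} \Bigl( \deg(\Ecal^\vee \otimes \trivbundle(\lambda_i))^\nonneg - \deg(\Qcal^\vee \otimes \trivbundle(\lambda_i))^\nonneg \Bigr).
\]
The slopewise dominance of $\Ecal^\vee$ over $\Qcal^\vee$ from Lemma \ref{slopewise dominance relations for key inequality} makes each summand of $\Delta$ nonnegative: using the identification of $\deg(\Vcal^\vee \otimes \trivbundle(\lambda))^\nonneg$ as $\max_{(x,y) \in \HN(\Vcal^\vee)}(y + \lambda x)$ built from Lemma \ref{degree in terms of HN vectors}, the inequality is immediate from the fact that the HN polygon of $\Ecal^\vee$ lies above that of $\Qcal^\vee$ on their common rank interval. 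Combined with $c_{\Ecal, \Fcal^\flat}(\Qcal) \geq 0$ from the assumed equal-rank case, this gives $c_{\Ecal, \Fcal}(\Qcal) \geq 0$.

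For the equality clause, $c_{\Ecal, \Fcal}(\Qcal) = 0$ forces both $c_{\Ecal, \Fcal^\flat}(\Qcal) = 0$ and $\Delta = 0$. The equal-rank case yields $\Qcal \simeq \Fcal^\flat$, whence $\mumin(\Qcal) = \lambda_r$ and every $\lambda_i$ with $i > r$ satisfies $\lambda_i \leq \mumin(\Qcal)$, so each $\deg(\Qcal^\vee \otimes \trivbundle(\lambda_i))^\nonneg$ vanishes. The crucial step---where hypothesis (iv) is indispensable---is that for the very same indices $\lambda_i \geq \mumin(\Fcal) > \mumin(\Ecal)$, so $\Ecal^\vee \otimes \trivbundle(\lambda_i)$ has strictly positive maximum slope $-\mumin(\Ecal) + \lambda_i$ and therefore $\deg(\Ecal^\vee \otimes \trivbundle(\lambda_i))^\nonneg > 0$. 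Hence $\Delta = 0$ can hold only when $\Fcal^\sharp = 0$, i.e.\ $\Fcal = \Fcal^\flat = \Qcal$. I expect the main delicacy of the argument to lie in verifying hypothesis (i) for $(\Ecal, \Fcal^\flat, \Qcal)$ and in this final strict-positivity step, which is precisely where the assumption $\mumin(\Ecal) < \mumin(\Fcal)$ imported via Lemma \ref{reduction on min slopes} earns its keep.
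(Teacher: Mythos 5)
Your argument is correct, but it takes a genuinely different route from the paper's. The paper proceeds by induction on $\rk(\Fcal) - \rk(\Qcal)$: at each step it first applies a shear (tensoring with $\trivbundle(-\mumin(\Fcal))$ via Lemma \ref{degree after shear}) to normalize $\mumin(\Fcal)=0$, then peels off a single copy of $\trivbundle$ from the bottom of $\Fcal$, invoking Lemma \ref{nonnegative degree for slopewise dominant pairs} (in untwisted form) to compare $c_{\Ecal,\Fcal}(\Qcal)$ with $c_{\Ecal,\rankred{\Fcal}}(\Qcal)$; the shear is what lets the removed summand be literally $\trivbundle$. You instead truncate $\Fcal$ to its top $r=\rk(\Qcal)$ slopes in one step, writing $\Fcal=\Fcal^\flat\oplus\Fcal^\sharp$, and control the correction term $\Delta$ directly via a twisted version of Lemma \ref{nonnegative degree for slopewise dominant pairs} (which, as you note, is most transparently seen through the identity $\deg(\Vcal^\vee\otimes\trivbundle(\lambda))^\nonneg=\max_{(x,y)\in\HN(\Vcal^\vee)}(y+\lambda x)$, a consequence of Lemma \ref{degree in terms of HN vectors}). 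This buys you a non-inductive proof that sidesteps the shear normalization entirely, and your equality analysis is cleaner: where the paper derives a contradiction from $\deg(\Ecal)^\nonpos = \deg(\Qcal)^\nonpos = 0$ forcing $\mumin(\Ecal)\geq 0$, you observe directly that for each $i>r$ the term $\deg(\Ecal^\vee\otimes\trivbundle(\lambda_i))^\nonneg$ is strictly positive (since $\lambda_i \geq \mumin(\Fcal)>\mumin(\Ecal)$) while $\deg(\Qcal^\vee\otimes\trivbundle(\lambda_i))^\nonneg=0$, so $\Delta>0$ whenever $\Fcal^\sharp\neq 0$. Both approaches exploit hypothesis (iv) at exactly this juncture, which is the correct diagnosis of where it ``earns its keep.'' The one place your sketch is slightly terse is the verification of the equality clause of hypothesis (i) for $(\Ecal,\Fcal^\flat,\Qcal)$, but the reasoning you indicate is sound: if $\rk(\Ecal^{\leq\mu})=\rk(\Fcal^{\flat,\leq\mu})$, the sandwich $\rk(\Fcal^{\flat,\leq\mu})\leq\rk(\Fcal^{\leq\mu})\leq\rk(\Ecal^{\leq\mu})$ collapses, forcing $\Fcal^{\flat,\leq\mu}=\Fcal^{\leq\mu}\simeq\Ecal^{\leq\mu}$ by the original hypothesis (and indeed, when $\Fcal^\sharp\neq 0$, this can only happen when all three terms vanish).
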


\begin{proof}
Suppose that Proposition \ref{key inequality} holds in the special case where the additional assumptions are satisfied. We assert that the general case of Proposition \ref{key inequality} follows from this special case by induction on $\rk(\Fcal) - \rk(\Qcal)$. We assume that all slopes of $\Ecal, \Fcal$ and $\Qcal$ are integers in light of Proposition \ref{key inequality reduction to integer slopes}. 


We first reduce our induction step to the case $\mumin(\Fcal) = 0$. For this, we take $\lambda = -\mumin(\Fcal)$ 
so that we have $\mumin(\Fcal(\lambda)) = \mumin(\Fcal) + \lambda = 0$. 
Our assumption implies that $\lambda$ is an integer, and consequently that all slopes of $\Ecal(\lambda), \Fcal(\lambda)$ and $\Qcal(\lambda)$ are integers as well. We now apply Lemma \ref{degree after shear} to get an identity
\[c_{\Ecal(\lambda), \Fcal(\lambda)}(\Qcal(\lambda)) = c_{\Ecal, \Fcal}(\Qcal),\] 
which implies that the inequality \eqref{deg inequality for surj} for $\Ecal, \Fcal$ and $\Qcal$ is equivalent to the corresponding inequality for $\Ecal(\lambda), \Fcal(\lambda)$ and $\Qcal(\lambda)$. In addition, we translate the equality condition $\Qcal = \Fcal$ for the former inequality to the equality condition $\Qcal(\lambda) = \Fcal(\lambda)$ for the latter inequality. We also have $\rk(\Fcal) - \rk(\Qcal) = \rk(\Fcal(\lambda)) - \rk(\Qcal(\lambda))$ as tensoring with $\trivbundle(\lambda)$ does not change ranks. Now Lemma \ref{assumptions of key inequality after shear} implies that we may proceed to the induction step after replacing $\Ecal, \Fcal$ and $\Qcal$ by $\Ecal(\lambda), \Fcal(\lambda)$ and $\Qcal(\lambda)$, thereby yielding our desired reduction. 

Let us now assume that $\mumin(\Fcal) = 0$. For our induction step we have $\rk(\Fcal) - \rk(\Qcal) >0$, or equivalently $\rk(\Fcal) > \rk(\Qcal)$. Hence we can write 
\[\Fcal = \rankred{\Fcal} \oplus \trivbundle\] 
for some vector bundle $\rankred{\Fcal}$ with $\mumin(\rankred{\Fcal}) \geq 0$ and $\rk(\rankred{\Fcal})  \geq \rk(\Qcal)$.  
\begin{figure}[H]
\begin{tikzpicture}	

		\coordinate (left) at (0, 0);
		\coordinate (q0) at (1,2);
		\coordinate (q1) at (2, 3);
		\coordinate (q2) at (3.5, 3.5);
		\coordinate (q3) at (5, 3.5);
		

		\coordinate (p0) at (1.5, 1);
		\coordinate (p1) at (3, 1.3);
		\coordinate (p2) at (4, 0.7);
				
		\draw[step=1cm,thick] (left) -- (q0) --  (q1) -- (q2) -- (q3);
		\draw[step=1cm,thick] (left) -- (p0) --  (p1) -- (p2);
		
		\draw [fill] (q0) circle [radius=0.05];		
		\draw [fill] (q1) circle [radius=0.05];		
		\draw [fill] (q2) circle [radius=0.05];		
		\draw [fill] (q3) circle [radius=0.05];		
		\draw [fill] (left) circle [radius=0.05];
		
		\draw [fill] (p0) circle [radius=0.05];		
		\draw [fill] (p1) circle [radius=0.05];		
		\draw [fill] (p2) circle [radius=0.05];		

		\draw[step=1cm,dotted] (4.5, -0.4) -- (4.5, 3.6);

		\node at (4.4,-0.8) {\scriptsize $\rk(\Fcal)-1$};
		
		\path (q3) ++(0.2, 0.3) node {$\HN(\Fcal)$};
		\path (p2) ++(-0.2, -0.3) node {$\HN(\Qcal)$};
		\path (left) ++(-0.3, -0.05) node {$O$};

\end{tikzpicture}
\begin{tikzpicture}[scale=0.4]
        \pgfmathsetmacro{\textycoordinate}{8}
		\draw[->, line width=0.6pt] (0, \textycoordinate) -- (1.5,\textycoordinate);
		\draw (0,0) circle [radius=0.00];	
        \hspace{0.2cm}
\end{tikzpicture}
\begin{tikzpicture}	

		\coordinate (left) at (0, 0);
		\coordinate (q0) at (1,2);
		\coordinate (q1) at (2, 3);
		\coordinate (q2) at (3.5, 3.5);
		\coordinate (q3) at (4.5, 3.5);
		

		\coordinate (p0) at (1.5, 1);
		\coordinate (p1) at (3, 1.3);
		\coordinate (p2) at (4, 0.7);
				
		\draw[step=1cm,thick] (left) -- (q0) --  (q1) -- (q2) -- (q3);
		\draw[step=1cm,thick] (left) -- (p0) --  (p1) -- (p2);
		
		\draw [fill] (q0) circle [radius=0.05];		
		\draw [fill] (q1) circle [radius=0.05];		
		\draw [fill] (q2) circle [radius=0.05];		
		\draw [fill] (q3) circle [radius=0.05];		
		\draw [fill] (left) circle [radius=0.05];
		
		\draw [fill] (p0) circle [radius=0.05];		
		\draw [fill] (p1) circle [radius=0.05];		
		\draw [fill] (p2) circle [radius=0.05];		
		
		\draw[step=1cm,dotted] (4.5, -0.4) -- (4.5, 3.6);

		\draw[step=1cm,dashed] (q3) -- (5, 3.5);

		\node at (4.4,-0.8) {\scriptsize $\rk(\Fcal)-1$};
		
		\path (q3) ++(0.2, 0.3) node {$\HN(\rankred{\Fcal})$};
		\path (p2) ++(-0.2, -0.3) node {$\HN(\Qcal)$};
		\path (left) ++(-0.3, -0.05) node {$O$};

\end{tikzpicture}
\caption{Illustration of the induction step}\label{slopes_cond_fig}
\end{figure}

We assert that our assumptions on $\Ecal, \Fcal$ and $\Qcal$ yield the corresponding conditions on $\Ecal, \rankred{\Fcal}$ and $\Qcal$. In other words, we claim that $\Ecal, \rankred{\Fcal}$ and $\Qcal$ satisfy the following properties:
\begin{enumerate}[label=(\roman*)]
\item\label{slope condition on E and F for induction step} $\rk(\Ecal^{\leq \mu}) \geq \rk(\rankred{\Fcal}^{\leq \mu})$ for every $\mu \in \Q$ with equality only when $\Ecal^{\leq \mu} \simeq \rankred{\Fcal}^{\leq \mu}$. 
\smallskip

\item\label{slope condition on E and Q for induction step} $\rk(\Ecal^{\leq \mu}) \geq \rk(\Qcal^{\leq \mu})$ for every $\mu \in \Q$ with equality only when $\Ecal^{\leq \mu} \simeq \Qcal^{\leq \mu}$. 
\smallskip

\item\label{slope condition on F and Q for induction step} $\rk(\rankred{\Fcal}^{\geq \mu}) \geq \rk(\Qcal^{\geq \mu})$ for every $\mu \in \Q$. 
\smallskip

\item\label{no common slopes for induction step} $\Ecal$ and $\rankred{\Fcal}$ have no common slopes. 
\smallskip

\item\label{integer slopes for induction step} the slopes of $\Ecal, \rankred{\Fcal}$ and $\Qcal$ are integers.
\end{enumerate}
By construction, the properties \ref{slope condition on E and Q for induction step}, \ref{no common slopes for induction step} and \ref{integer slopes for induction step} immediately follow from the corresponding assumptions on $\Ecal, \Fcal$ and $\Qcal$. The inequality $\rk(\Ecal^{\leq \mu}) \geq \rk(\rankred{\Fcal}^{\leq \mu})$ in \ref{slope condition on E and F for induction step} follows from the corresponding inequality $\rk(\Ecal^{\leq \mu}) \geq \rk(\Fcal^{\leq \mu})$ after observing
\[\rk(\rankred{\Fcal}^{\leq \mu}) = \begin{cases} \rk(\Fcal^{\leq \mu}) - 1 &\text { if } \mu \geq 0,\\ \rk(\Fcal^{\leq \mu}) & \text{ if } \mu < 0.\end{cases}\]
This observation further shows that equality in $\rk(\Ecal^{\leq \mu}) \geq \rk(\rankred{\Fcal}^{\leq \mu})$ never holds for $\mu \geq 0$. Moreover, for $\mu < 0$ we have $\rankred{\Fcal}^{\leq \mu} = 0$ by the fact $\mumin(\rankred{\Fcal}) \geq 0$. Hence we deduce that equality in $\rk(\Ecal^{\leq \mu}) \geq \rk(\rankred{\Fcal}^{\leq \mu})$ can hold only if $\Ecal^{\leq \mu} = \rankred{\Fcal}^{\leq \mu} = 0$, thereby verifying the property \ref{slope condition on E and F for induction step}. The remaining property \ref{slope condition on F and Q for induction step} is equivalent to slopewise dominance of $\rankred{\Fcal}$ on $\Qcal$ by Lemma \ref{slopewise dominance and rank inequalities}, and thus follows from the following observations:
\begin{enumerate}[label=(\alph*)]
\item\label{slopewise dominance of F on Q for induction step} $\Fcal$ slopewise dominates $\Qcal$ by Lemma \ref{slopewise dominance relations for key inequality}. 
\smallskip

\item\label{relation between HN polygons for induction step} $\HN(\rankred{\Fcal})$ is obtained from $\HN(\Fcal)$ by removing the line segment over the interval $(\rk(\Fcal)-1, \rk(\Fcal)]$ (and thereby having the same left endpoint). 
\smallskip

\item Since $\rk(\Fcal)>\rk(\Qcal)$, the removal process in \ref{relation between HN polygons for induction step} does not affect slopewise dominance. 
\end{enumerate}

Now since $\rk(\rankred{\Fcal}) - \rk(\Qcal) < \rk(\Fcal) - \rk(\Qcal)$, our induction hypothesis yields
\begin{equation}\label{step 1 codim inequality for E, F', Q}
c_{\Ecal, \rankred{\Fcal}}(\Qcal) \geq 0
\end{equation}
with equality if and only if $\Qcal \simeq \rankred{\Fcal}$. For the desired inequality $c_{\Ecal, \Fcal}(\Qcal) \geq 0$ we compute
\begin{align*}
\deg(\Ecal^\vee \otimes \Fcal)^\nonneg &= \deg(\Ecal^\vee \otimes (\rankred{\Fcal} \oplus \trivbundle))^\nonneg\\
&= \deg(\Ecal^\vee \otimes \rankred{\Fcal})^\nonneg + \deg(\Ecal^\vee \otimes \trivbundle)^\nonneg\\
&= \deg(\Ecal^\vee \otimes \rankred{\Fcal})^\nonneg + \deg(\Ecal^\vee)^\nonneg,\\
\deg(\Qcal^\vee \otimes \Fcal)^\nonneg &= \deg(\Qcal^\vee \otimes (\rankred{\Fcal} \oplus \trivbundle))^\nonneg\\
&= \deg(\Qcal^\vee \otimes \rankred{\Fcal})^\nonneg + \deg(\Qcal^\vee \otimes \trivbundle)^\nonneg\\
&= \deg(\Qcal^\vee \otimes \rankred{\Fcal})^\nonneg + \deg(\Qcal^\vee)^\nonneg.
\end{align*}
Then we have
\[c_{\Ecal, \Fcal}(\Qcal) = c_{\Ecal, \rankred{\Fcal}}(\Qcal) - \deg(\Ecal^\vee)^\nonneg + \deg(\Qcal^\vee)^\nonneg.\]
Since $\Ecal^\vee$ slopewise dominates $\Qcal^\vee$ as noted in Lemma \ref{slopewise dominance relations for key inequality}, we use Lemma \ref{nonnegative degree for slopewise dominant pairs} to find
\begin{equation}\label{step 1 inequality for codim E, F, Q and codim E, F', Q}
c_{\Ecal, \Fcal}(\Qcal) \geq c_{\Ecal, \rankred{\Fcal}}(\Qcal)
\end{equation}
with equality if and only if $\deg(\Ecal^\vee)^\nonneg = \deg(\Qcal^\vee)^\nonneg$ or equivalently $\deg(\Ecal)^\nonpos = \deg(\Qcal)^\nonpos$. Combining \eqref{step 1 codim inequality for E, F', Q} and \eqref{step 1 inequality for codim E, F, Q and codim E, F', Q} we obtain the desired inequality
\begin{equation}\label{step 1 codim inequality for E, F, Q}
c_{\Ecal, \Fcal}(\Qcal) \geq 0.
\end{equation}

It remains to check the equality condition for \eqref{step 1 codim inequality for E, F, Q}. Note that the equality condition $\Qcal = \Fcal$ as asserted in Proposition \ref{key inequality} never holds in our induction step as we have $\rk(\Fcal)>\rk(\Qcal)$. We thus have to prove that the inequality \eqref{step 1 codim inequality for E, F, Q} is strict. Suppose for contradiction that we have equality in \eqref{step 1 codim inequality for E, F, Q}. From the equality conditions for \eqref{step 1 codim inequality for E, F', Q} and \eqref{step 1 inequality for codim E, F, Q and codim E, F', Q} we get $\Qcal \simeq \rankred{\Fcal}$ and $\deg(\Ecal)^\nonpos = \deg(\Qcal)^\nonpos$. Since $\mumin(\rankred{\Fcal}) \geq 0$ by construction, the condition $\Qcal \simeq \rankred{\Fcal}$ implies $\deg(\Qcal)^\nonpos = 0$. Hence we must have $\deg(\Ecal)^\nonpos = 0$, which implies $\mumin(\Ecal) \geq 0$. Furthermore, since we assume $\mumin(\Fcal) = 0$, the assumption \ref{no common slopes for E and F} of Proposition \ref{key inequality} yields
\[ \mumin(\Ecal) > 0 = \mumin(\Fcal).\]
We thus find
\[ \rk(\Ecal^{\leq \mumin(\Fcal)}) = 0 < \rk(\Fcal^{\leq \mumin(\Fcal)}),\]
yielding a contradiction to the assumption \ref{slope condition on E and F} of Proposition \ref{key inequality} as desired. 
\end{proof}



We now proceed to the final reduction step. Here we aim to reduce the slopes of $\Fcal$ to the slopes of $\Qcal$ in a certain way that the quantity $c_{\Ecal, \Fcal}(\Qcal)$ can only decrease throughout the procedure. For a precise description of our procedure, we introduce the following construction:

\begin{defn}\label{max slope reduction definition}
Let $\Vcal$ and $\Wcal$ be nonzero vector bundles on $\adicff$ with integer slopes such that $\Vcal$ slopewise dominates $\Wcal$. Let $\maxslopered{\Vcal}$ be the vector bundle on $\adicff$ obtained from $\Vcal$ by reducing all slopes of $\Vcal^{>\mumax(\Wcal)}$ to $\mumax(\Wcal)$. More precisely, we set
\[\maxslopered{\Vcal} := \trivbundle(\mumax(\Wcal))^{\oplus \rk(\Vcal^{>\mumax(\Wcal)})} \oplus \Vcal^{\leq \mumax(\Wcal)}.\]
We say that $\maxslopered{\Vcal}$ is the \emph{maximal slope reduction} of $\Vcal$ to $\Wcal$.  
\end{defn}

\begin{figure}[H]
\begin{tikzpicture}	

		\coordinate (left) at (0, 0);
		\coordinate (q0) at (1,2);
		\coordinate (q1) at (2, 3);
		\coordinate (q2) at (3.5, 3.5);
		\coordinate (q3) at (5, 2.8);
		

		\coordinate (p0) at (1.5, 1.2);
		\coordinate (p1) at (3, 1);
		\coordinate (p2) at (4, 0.2);
				
		\draw[step=1cm,thick, color=red] (left) -- (q0) --  (q1);
		\draw[step=1cm,thick, color=blue] (q1) -- (q2) -- (q3);
		\draw[step=1cm,thick, color=black] (left) -- (p0) -- (p1) -- (p2);

		\draw [fill] (q0) circle [radius=0.05];		
		\draw [fill] (q1) circle [radius=0.05];		
		\draw [fill] (q2) circle [radius=0.05];		
		\draw [fill] (q3) circle [radius=0.05];		
		\draw [fill] (left) circle [radius=0.05];
		
		\draw [fill] (p0) circle [radius=0.05];		
		\draw [fill] (p1) circle [radius=0.05];		
		\draw [fill] (p2) circle [radius=0.05];		


		
		\path (q3) ++(0.2, -0.4) node {$\HN(\Vcal)$};
		\path (p2) ++(-0.2, -0.4) node {$\HN(\Wcal)$};
		\path (left) ++(-0.3, -0.05) node {$O$};

		\path (q0) ++(-0.5, 0.6) node {\color{red}$\Vcal^{>\mumax(\Wcal)}$};
		\path (q2) ++(0, 0.3) node {\color{blue}$\Vcal^{\leq \mumax(\Wcal)}$};

\end{tikzpicture}
\begin{tikzpicture}[scale=0.4]
        \pgfmathsetmacro{\textycoordinate}{5}
		\draw[->, line width=0.6pt] (0, \textycoordinate) -- (1.5,\textycoordinate);
		\draw (0,0) circle [radius=0.00];	
        \hspace{0.2cm}
\end{tikzpicture}
\begin{tikzpicture}
		\pgfmathsetmacro{\reducedycoordinate}{1.6}
	

		\coordinate (left) at (0, 0);
		\coordinate (q0) at (1,2);
		\coordinate (q1) at (2, 3);
		\coordinate (q2) at (3.5, 3.5);
		\coordinate (q3) at (5, 2.8);

		\coordinate (left) at (0, 0);
		\coordinate (q1') at (2, \reducedycoordinate);
		\coordinate (q2') at (3.5, \reducedycoordinate+0.5);
		\coordinate (q3') at (5, \reducedycoordinate-0.2);
		

		\coordinate (p0) at (1.5, 1.2);
		\coordinate (p1) at (3, 1);
		\coordinate (p2) at (4, 0.2);

		\draw[step=1cm,thick,dashed, color=red] (left) -- (q0) --  (q1);
		\draw[step=1cm,thick,dashed, color=blue] (q1) -- (q2) -- (q3);
		\draw[step=1cm,thick, color=red] (left) --(q1');
		\draw[step=1cm,thick, color=blue] (q1') -- (q2') -- (q3');
		\draw[step=1cm,thick] (p0) --  (p1) -- (p2);
		
		\draw [fill] (q0) circle [radius=0.05];		
		\draw [fill] (q1) circle [radius=0.05];		
		\draw [fill] (q2) circle [radius=0.05];		
		\draw [fill] (q3) circle [radius=0.05];		
		\draw [fill] (left) circle [radius=0.05];

		\draw [fill] (q1') circle [radius=0.05];			
		\draw [fill] (q2') circle [radius=0.05];		
		\draw [fill] (q3') circle [radius=0.05];		
		
		\draw [fill] (p0) circle [radius=0.05];		
		\draw [fill] (p1) circle [radius=0.05];		
		\draw [fill] (p2) circle [radius=0.05];		

		\draw[->, line width=0.6pt, color=red] (1, 1.5) -- (1,1);
		\draw[->, line width=0.6pt, color=blue] (3.5, 2.8) -- (3.5,2.3);
		


		
		\path (q3') ++(0.2, -0.4) node {$\HN(\maxslopered{\Vcal})$};
		\path (p2) ++(-0.2, -0.4) node {$\HN(\Wcal)$};
		\path (left) ++(-0.3, -0.05) node {$O$};

\end{tikzpicture}
\caption{Illustration of the maximal slope reduction}
\end{figure}

\begin{remark}
The assumption that $\Vcal$ and $\Wcal$ have integer slopes is crucial in Definition \ref{max slope reduction definition}. In fact, if $\mumax(\Wcal)$ is not an integer, reducing all slopes of $\Vcal^{>\mumax(\Wcal)}$ to $\mumax(\Wcal)$ may not make sense.
For example, if we consider $\Vcal = \trivbundle(1)^{\oplus 3}$ and $\Wcal = \trivbundle\left(\frac{1}{2}\right)$, reducing all slopes of $\Vcal$ to $\mumax(\Wcal) = \frac{1}{2}$ should yield a semistable vector bundle of slope $\frac{1}{2}$ and rank $3$, which does not exist. 

On the other hand, slopewise dominance of $\Vcal$ on $\Wcal$ is not essential for the definition to make sense. However, there are a couple of reasons that we don't consider the case when $\Vcal$ does not slopewise dominates $\Wcal$. First, our terminology doesn't quite make sense in this case as $\Vcal$ may have no slopes to reduce down to $\Wcal$, for example when $\mumax(\Vcal)<\mumin(\Wcal)$. Second, we won't need this case for our purpose; indeed, we will only apply the notion of maximal slope reduction to (some direct summands of) $\Fcal$ and $\Qcal$ for which we have a slopewise dominance relation given by Lemma \ref{slopewise dominance relations for key inequality}. 
\end{remark}

We note some basic properties of the maximal slope reduction. 

\begin{lemma}\label{basic properties of max slope reduction}
Let $\Vcal$ and $\Wcal$ be nonzero vector bundles on $\adicff$ with integer slopes such that $\Vcal$ slopewise dominates $\Wcal$. Let $\maxslopered{\Vcal}$ denote the maximal slope reduction of $\Vcal$ to $\Wcal$. Then $\maxslopered{\Vcal}$ satisfies the following properties:
\begin{enumerate}[label=(\roman*)]
\item $\mumax(\maxslopered{\Vcal}) = \mumax(\Wcal)$. 
\smallskip

\item $\rk(\maxslopered{\Vcal}) = \rk(\Vcal)$. 
\smallskip

\item $\Vcal = \maxslopered{\Vcal}$ if and only if $\mumax(\Vcal) = \mumax(\Wcal)$. 
\smallskip

\item $\maxslopered{\Vcal}$ slopewise dominates $\Wcal$. 
\smallskip

\item all slopes of $\maxslopered{\Vcal}$ are integers. 
\end{enumerate}
\end{lemma}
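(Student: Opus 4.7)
The plan is to establish each property by unpacking Definition \ref{max slope reduction definition} and invoking Lemma \ref{slope_leq_part via HN decomp}. Properties (i), (ii), (iii), and (v) are essentially bookkeeping. Since
\[\maxslopered{\Vcal} = \trivbundle(\mumax(\Wcal))^{\oplus \rk(\Vcal^{>\mumax(\Wcal)})} \oplus \Vcal^{\leq \mumax(\Wcal)},\]
every slope of $\maxslopered{\Vcal}$ is either $\mumax(\Wcal)$ or a slope of $\Vcal^{\leq \mumax(\Wcal)}$, all of which are integers bounded above by $\mumax(\Wcal)$; this yields (i) and (v) at once. The rank formula $\rk(\Vcal^{>\mumax(\Wcal)}) + \rk(\Vcal^{\leq \mumax(\Wcal)}) = \rk(\Vcal)$ gives (ii). For (iii), I would note that slopewise dominance forces $\mumax(\Vcal) \geq \mumax(\Wcal)$; the equality $\mumax(\Vcal) = \mumax(\Wcal)$ is equivalent to $\Vcal^{>\mumax(\Wcal)} = 0$ and $\Vcal^{\leq \mumax(\Wcal)} = \Vcal$, which in turn gives $\maxslopered{\Vcal} = \Vcal$; while if $\mumax(\Vcal) > \mumax(\Wcal)$, then $\Vcal$ has a slope exceeding $\mumax(\Wcal)$ but $\maxslopered{\Vcal}$ does not by (i).

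The main step is (iv). The approach is to compare $\HN(\maxslopered{\Vcal})$ to $\HN(\Vcal)$ segment by segment. Let $x_0 := \rk(\Vcal^{>\mumax(\Wcal)})$, which is automatically a nonnegative integer. By construction, $\HN(\maxslopered{\Vcal})$ is the concatenation of a single segment of slope $\mumax(\Wcal)$ over $[0, x_0]$, followed by a translate of $\HN(\Vcal^{\leq \mumax(\Wcal)})$ beginning at $(x_0, x_0 \cdot \mumax(\Wcal))$. In particular, for every integer $i > x_0$ the slope of $\HN(\maxslopered{\Vcal})$ on $[i-1, i]$ equals the slope of $\HN(\Vcal)$ on $[i-1, i]$.

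With this description, I would verify slopewise dominance of $\maxslopered{\Vcal}$ on $\Wcal$ by checking the defining inequality on each interval $[i-1, i]$ for $1 \leq i \leq \rk(\Wcal)$. When $i \leq x_0$, the slope of $\HN(\maxslopered{\Vcal})$ on $[i-1, i]$ is $\mumax(\Wcal)$, and since every slope of $\HN(\Wcal)$ is at most $\mumax(\Wcal)$, the inequality holds trivially. When $i > x_0$, the slope of $\HN(\maxslopered{\Vcal})$ on $[i-1, i]$ coincides with the slope of $\HN(\Vcal)$ on the same interval, so the inequality follows from slopewise dominance of $\Vcal$ on $\Wcal$.

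The only potential obstacle is keeping the horizontal alignment of the HN polygons consistent; since Definition \ref{def of slopewise dominance} aligns left endpoints at the origin and the cutoff $x_0$ is an integer, the case split proceeds cleanly without any issue of fractional crossings. I do not expect this proof to contain any serious difficulty beyond carefully tracking which summand of $\maxslopered{\Vcal}$ contributes to each unit interval.
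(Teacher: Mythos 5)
Your proof is correct and follows the same approach as the paper, which simply reads each property off Definition~\ref{max slope reduction definition} (the paper in fact treats the whole lemma as immediate). One very minor point: your argument for (i) only shows $\mumax(\maxslopered{\Vcal})\leq\mumax(\Wcal)$ directly; to get the equality one should also note that either $\Vcal^{>\mumax(\Wcal)}\neq 0$, in which case $\trivbundle(\mumax(\Wcal))$ occurs as a summand, or $\Vcal^{>\mumax(\Wcal)}=0$, in which case slopewise dominance forces $\mumax(\Vcal)=\mumax(\Wcal)$ (as you already observe in your argument for (iii)), so $\maxslopered{\Vcal}=\Vcal$ realizes the bound.
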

\begin{proof}
All properties are immediate consequences of Definition \ref{max slope reduction definition} 
\end{proof}

We can now recursively define our procedure for reducing the slopes of $\Fcal$ to the slopes of $\Qcal$ as follows:
\begin{enumerate}[label = (\Roman*)]
\item\label{slope reduction decomposition step} Since $\Fcal$ slopewise dominates $\Qcal$ as noted in Lemma \ref{slopewise dominance relations for key inequality}, we use Lemma \ref{existence of maximal common factor decomp} to obtain decompositions 
\begin{equation*}\label{max common factor decomps for F and Q}
\Fcal \simeq \Ucal \oplus \Fcal' \quad\quad \text{ and } \quad\quad \Qcal \simeq \Ucal \oplus \Qcal'
\end{equation*}
satisfying the following properties:
\smallskip
\begin{enumerate}[label=(\roman*)]
\item\label{slopewise dominance for F' and Q'} $\Fcal'$ slopewise dominates $\Qcal'$. 
\smallskip

\item\label{inequality for max slopes of F' and Q'} If $\Qcal' \neq 0$, we have $\mumax(\Fcal')>\mumax(\Qcal')$. 
\smallskip

\item\label{ineqaulities for min slope of U} If $\Ucal \neq 0$ and $\Qcal' \neq 0$, we have $\mumin(\Ucal) \geq \mumax(\Fcal') > \mumax(\Qcal')$.
\end{enumerate}
\smallskip

\item If $\Qcal' = 0$, we terminate the process. Otherwise, we go back to \ref{slope reduction decomposition step} after replacing $\Fcal$ by $\Ucal \oplus \maxslopered{\Fcal}'$, where $\maxslopered{\Fcal}'$ denotes the maximal slope reduction of $\Fcal'$ to $\Qcal'$.
\end{enumerate}

\begin{figure}[H]
\begin{tikzpicture}	
		\coordinate (left) at (0, 0);
		\coordinate (q1) at (2.5, 3.5);
		\coordinate (q2) at (4, 4);
		\coordinate (q3) at (5, 3.7);

		\coordinate (p0) at (0.5, 1.5);
		\coordinate (p1) at (1.5, 2.5);
		\coordinate (p2) at (3.5, 3);
		\coordinate (p3) at (4.5, 2.5);
		\coordinate (p4) at (5, 1.5);
				
		\draw[step=1cm,thick, color=red] (left) -- (p0) --  (p1);
		\draw[step=1cm,thick, color=blue] (p1) -- (q1) -- (q2) -- (q3);
		\draw[step=1cm,thick, color=green] (p1) -- (p2) -- (p3) -- (p4);

		\draw [fill] (q1) circle [radius=0.05];		
		\draw [fill] (q2) circle [radius=0.05];		
		\draw [fill] (q3) circle [radius=0.05];		
		\draw [fill] (left) circle [radius=0.05];
		
		\draw [fill] (p0) circle [radius=0.05];		
		\draw [fill] (p1) circle [radius=0.05];		
		\draw [fill] (p2) circle [radius=0.05];		
		\draw [fill] (p3) circle [radius=0.05];		
		\draw [fill] (p4) circle [radius=0.05];		


		
		\path (q3) ++(0.2, -0.4) node {$\HN(\Fcal)$};
		\path (p4) ++(-0.2, -0.4) node {$\HN(\Qcal)$};
		\path (left) ++(-0.3, -0.05) node {$O$};

		\path (p0) ++(-0.2, 0.3) node {\color{red}$\Ucal$};
		\path (q1) ++(0, 0.3) node {\color{blue}$\Fcal'$};
		\path (p2) ++(0, -0.4) node {\color{green}$\Qcal'$};

\end{tikzpicture}
\begin{tikzpicture}[scale=0.4]
        \pgfmathsetmacro{\textycoordinate}{6}
		\draw[->, line width=0.6pt] (0, \textycoordinate) -- (1.5,\textycoordinate);
		\draw (0,0) circle [radius=0.00];	
        \hspace{0.2cm}
\end{tikzpicture}
\begin{tikzpicture}
		\pgfmathsetmacro{\reducedycoordinate}{2.5/4+2.5}
	
		\coordinate (left) at (0, 0);
		\coordinate (q1) at (2.5, 3.5);
		\coordinate (q2) at (4, 4);
		\coordinate (q3) at (5, 3.7);

		\coordinate (q2') at (4, \reducedycoordinate);
		\coordinate (q3') at (5, \reducedycoordinate-0.3);

		\coordinate (p0) at (0.5, 1.5);
		\coordinate (p1) at (1.5, 2.5);
		\coordinate (p2) at (3.5, 3);
		\coordinate (p3) at (4.5, 2.5);
		\coordinate (p4) at (5, 1.5);
				
		\draw[step=1cm,thick, color=red] (left) -- (p0) --  (p1);
		\draw[step=1cm,thick,dashed, color=blue] (p1) -- (q1) -- (q2) -- (q3);
		\draw[step=1cm,thick, color=blue] (p1) -- (q2') -- (q3');
		\draw[step=1cm,thick, color=green] (p2) -- (p3) -- (p4);

		\draw [fill] (q1) circle [radius=0.05];		
		\draw [fill] (q2) circle [radius=0.05];		
		\draw [fill] (q3) circle [radius=0.05];		
		\draw [fill] (left) circle [radius=0.05];

		\draw [fill] (q2') circle [radius=0.05];		
		\draw [fill] (q3') circle [radius=0.05];	
		
		\draw [fill] (p0) circle [radius=0.05];		
		\draw [fill] (p1) circle [radius=0.05];		
		\draw [fill] (p2) circle [radius=0.05];		
		\draw [fill] (p3) circle [radius=0.05];		
		\draw [fill] (p4) circle [radius=0.05];	

		\draw[->, line width=0.6pt, color=blue] (3, 3.5) -- (3,3);	


		
		\path (q3') ++(0.6, -0.4) node {$\HN(\Fcal)$};
		\path (p4) ++(-0.2, -0.4) node {$\HN(\Qcal)$};
		\path (left) ++(-0.3, -0.05) node {$O$};

		\path (p0) ++(-0.2, 0.3) node {\color{red}$\Ucal$};
		\path (q2') ++(0, 0.3) node {\color{blue}$\maxslopered{\Fcal}'$};

\end{tikzpicture}
\caption{Illustration of the slope reduction process}
\end{figure}

With this procedure, we will obtain the desired inequality $c_{\Ecal, \Fcal}(\Qcal) \geq 0$ by establishing the following facts:
\begin{enumerate}[label=(\Alph*)]
\item\label{decreasing cEF(Q) during slope reduction} The quantity $c_{\Ecal, \Fcal}(\Qcal)$ never increases throughout the process.

\item\label{terminal condition for slope reduction} The process eventually terminates with the condition $\Qcal = \Fcal$ and $c_{\Ecal, \Fcal}(\Qcal) = 0$.
\end{enumerate}
For the equality condition, we will further show that
\begin{enumerate}[label=(\Alph*), start=3]
\item\label{strict increase after first slope reduction} $c_{\Ecal, \Fcal}(\Qcal)$ strictly decreases after the first cycle of the process,
\end{enumerate}
thereby deducing that the equality $c_{\Ecal, \Fcal}(\Qcal)= 0$ can be only achieved by starting with the terminal state $\Qcal = \Fcal$.

The main subtlety for our procedure arises from the fact that some of our assumptions on $\Ecal, \Fcal$ and $\Qcal$ may be lost during our process. In the following lemma, we give a list of all assumptions that are maintained throughout the entire process. 

\begin{lemma}\label{assumptions of key inequality after max slope reduction}
Let $\Ecal, \Fcal$ and $\Qcal$ be nonzero vector bundles on $\adicff$ with the following properties: 
\begin{enumerate}[label=(\roman*)]
\item\label{slope condition on E and F without equality condition} $\rk(\Ecal^{\leq \mu}) \geq \rk(\Fcal^{\leq \mu})$ for every $\mu \in \Q$ 
\smallskip

\item\label{slope condition on E and Q} $\rk(\Ecal^{\leq \mu}) \geq \rk(\Qcal^{\leq \mu})$ for every $\mu \in \Q$ with equality only when $\Ecal^{\leq \mu} \simeq \Qcal^{\leq \mu}$. 
\smallskip

\item\label{slope condition on F and Q} $\rk(\Fcal^{\geq \mu}) \geq \rk(\Qcal^{\geq \mu})$ for every $\mu \in \Q$. 
\smallskip

\item\label{integer slopes assumption for E, F and Q} all slopes of $\Ecal, \Fcal$ and $\Qcal$ are integers. 
\smallskip

\item\label{equal rank assumption for F and Q} $\rk(\Qcal) = \rk(\Fcal)$. 

\end{enumerate}
Then all properties \ref{slope condition on E and F without equality condition} - \ref{equal rank assumption for F and Q} are invariant under replacing $\Fcal$ by $\maxslopered{\Fcal}$, the maximal slope reduction of $\Fcal$ to $\Qcal$.






\end{lemma}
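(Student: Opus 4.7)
The plan is to verify each of the five properties in turn, exploiting the fact that $\maxslopered{\Fcal}$ agrees with $\Fcal$ on the ``low-slope'' part (slopes $\leq \mumax(\Qcal)$) while replacing everything above $\mumax(\Qcal)$ by copies of $\trivbundle(\mumax(\Qcal))$. Property \ref{slope condition on E and Q} is immediate since it does not involve $\Fcal$, and properties \ref{integer slopes assumption for E, F and Q} and \ref{equal rank assumption for F and Q}, together with slopewise dominance of $\maxslopered{\Fcal}$ on $\Qcal$, can be read off directly from Lemma \ref{basic properties of max slope reduction}; property \ref{slope condition on F and Q} then follows from Lemma \ref{slopewise dominance and rank inequalities}.

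The only substantive point is property \ref{slope condition on E and F without equality condition}. I would split the verification into two cases according to the position of $\mu$ relative to $\mu_0 := \mumax(\Qcal)$. For $\mu < \mu_0$, the flattened slopes (all equal to $\mu_0$) do not lie in $\maxslopered{\Fcal}^{\leq \mu}$, so $\maxslopered{\Fcal}^{\leq \mu} \simeq \Fcal^{\leq \mu}$ and the desired inequality is exactly the original hypothesis \ref{slope condition on E and F without equality condition} for $\Fcal$. For $\mu \geq \mu_0$, every slope of $\maxslopered{\Fcal}$ is at most $\mu$, whence $\rk(\maxslopered{\Fcal}^{\leq \mu}) = \rk(\maxslopered{\Fcal}) = \rk(\Fcal) = \rk(\Qcal)$ by property \ref{equal rank assumption for F and Q}. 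The key step is then to apply hypothesis \ref{slope condition on E and Q} at $\mu = \mu_0$, which gives $\rk(\Ecal^{\leq \mu_0}) \geq \rk(\Qcal^{\leq \mu_0}) = \rk(\Qcal)$, and to conclude by monotonicity of $\mu \mapsto \rk(\Ecal^{\leq \mu})$.

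I do not expect any real obstacle; the whole lemma is essentially a bookkeeping exercise that isolates the precise content of the reduction operation. It is worth noting that the equality clause of hypothesis \ref{slope condition on E and Q} is exactly what makes property \ref{slope condition on E and F without equality condition} survive in the regime $\mu \geq \mu_0$, which accounts for why the analogous equality clause has been dropped from the statement of \ref{slope condition on E and F without equality condition} --- such an equality would genuinely fail after maximal slope reduction, since in general $\maxslopered{\Fcal} \not\simeq \Qcal$ even when the ranks coincide.
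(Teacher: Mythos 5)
Your proposal is correct, but it treats property \ref{slope condition on E and F without equality condition} differently from the paper. You verify \ref{slope condition on E and F without equality condition} for $\maxslopered{\Fcal}$ directly by splitting at $\mu_0 := \mumax(\Qcal)$: for $\mu < \mu_0$ the truncation $\maxslopered{\Fcal}^{\leq\mu}$ coincides with $\Fcal^{\leq\mu}$ so the original hypothesis applies, and for $\mu \geq \mu_0$ you bound $\rk(\maxslopered{\Fcal}^{\leq\mu}) = \rk(\Qcal) = \rk(\Qcal^{\leq\mu_0}) \leq \rk(\Ecal^{\leq\mu_0}) \leq \rk(\Ecal^{\leq\mu})$. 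The paper instead observes that \ref{slope condition on E and F without equality condition} is a \emph{formal consequence} of \ref{slope condition on E and Q}, \ref{slope condition on F and Q} and \ref{equal rank assumption for F and Q}: slopewise dominance of $\Fcal$ on $\Qcal$ together with $\rk(\Fcal)=\rk(\Qcal)$ gives, via Lemma \ref{duality of slopewise dominance for equal rank case}, slopewise dominance of $\Qcal^\vee$ on $\Fcal^\vee$, hence $\rk(\Qcal^{\leq\mu}) \geq \rk(\Fcal^{\leq\mu})$, which chains with the inequality in \ref{slope condition on E and Q}. That reduction makes the invariance of \ref{slope condition on E and F without equality condition} automatic once the other four are checked, and is slightly more conceptual, but your case analysis is equally valid and of comparable length. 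For \ref{slope condition on E and Q}--\ref{equal rank assumption for F and Q} you argue exactly as the paper does, citing Lemma \ref{basic properties of max slope reduction} and Lemma \ref{slopewise dominance and rank inequalities}.

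One small inaccuracy in your closing remark: the equality clause of hypothesis \ref{slope condition on E and Q} plays no role in your verification of \ref{slope condition on E and F without equality condition} --- you only use the rank inequality, not the equality case --- so it is misleading to say that clause is ``exactly what makes'' \ref{slope condition on E and F without equality condition} survive. The correct observation, which you do make in the second half of that sentence, is simply that an equality clause on \ref{slope condition on E and F without equality condition} would genuinely fail for $\maxslopered{\Fcal}$, which is why the lemma deliberately omits it.
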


\begin{proof}
Let us first remark that the maximal slope reduction of $\Fcal$ to $\Qcal$ makes sense. Indeed, the property \ref{slope condition on F and Q} implies slopewise dominance of $\Fcal$ on $\Qcal$ by Lemma \ref{slopewise dominance relations for key inequality} while the property \ref{integer slopes assumption for E, F and Q} says that $\Fcal$ and $\Qcal$ have integer slopes. 

We now assert that the property \ref{slope condition on E and F without equality condition} is a formal consequence of the other properties. Note that the property \ref{slope condition on F and Q} is equivalent to slopewise dominance of $\Fcal$ on $\Qcal$ by Lemma \ref{slopewise dominance and rank inequalities}. Combining this with the property \ref{equal rank assumption for F and Q}, we obtain slopewise dominance of $\Qcal^\vee$ on $\Fcal^\vee$ by Lemma \ref{duality of slopewise dominance for equal rank case}. Hence Lemma \ref{slopewise dominance and rank inequalities} now yields an inequality
\[\rk(\Qcal^{\leq \mu}) \geq \rk(\Fcal^{\leq \mu}) \quad\quad \text{ for every } \mu \in \Q.\]
We then deduce the desired inequality $\rk(\Ecal^{\leq \mu}) \geq \rk(\Fcal^{\leq \mu})$ for every $\mu \in \Q$ by combining the above inequality with the inequality in the property \ref{slope condition on E and Q}. 

Hence we only need to check the invariance of the other properties \ref{slope condition on E and Q} - \ref{equal rank assumption for F and Q}. 
The invariance of the property \ref{slope condition on E and Q} is obvious since $\Ecal$ and $\Qcal$ remain unchanged. The property \ref{slope condition on F and Q} is equivalent to slopewise dominance of $\Fcal$ on $\Qcal$ by Lemma \ref{slopewise dominance and rank inequalities}, so its invariance under replacing $\Fcal$ by $\maxslopered{\Fcal}$ follows from Lemma \ref{basic properties of max slope reduction}. The invariance of the properties \ref{integer slopes assumption for E, F and Q} and \ref{equal rank assumption for F and Q} also follow immediately from Lemma \ref{basic properties of max slope reduction}.
\end{proof}

Lemma \ref{assumptions of key inequality after max slope reduction} suggests that during our process we may lose the following assumptions:
\begin{itemize}
\item $\Ecal$ and $\Fcal$ have no common slopes. 
\smallskip

\item the equality in $\rk(\Ecal^{\leq \mu}) \geq \rk(\Fcal^{\leq \mu})$ holds only when $\Ecal^{\leq \mu} \simeq \Fcal^{\leq \mu}$. 
\end{itemize}
Fortunately, losing either of these assumptions during our procedure will do no harm to our proof. In fact, these assumptions will be only necessary for establishing the fact that $c_{\Ecal, \Fcal}(\Qcal)$ strictly decreases after the first cycle of our procedure. In other words, our proof will be valid as long as we begin our procedure with all assumptions in Proposition \ref{key inequality}.



Let us now prove the key inequality for Step 3. 

\begin{prop}\label{decreasing c_EF(Q) after max reduction}
Let $\Ecal, \Fcal$ and $\Qcal$ be nonzero vector bundles on $\adicff$ with the following properties: 
\begin{enumerate}[label=(\roman*)]
\item\label{slope condition on E and F without equality condition, decreasing c_EF(Q) after max redunction} $\rk(\Ecal^{\leq \mu}) \geq \rk(\Fcal^{\leq \mu})$ for every $\mu \in \Q$ 
\smallskip

\item\label{slope condition on E and Q, decreasing c_EF(Q) after max redunction} $\rk(\Ecal^{\leq \mu}) \geq \rk(\Qcal^{\leq \mu})$ for every $\mu \in \Q$ with equality only when $\Ecal^{\leq \mu} \simeq \Qcal^{\leq \mu}$. 
\smallskip

\item\label{slope condition on F and Q, decreasing c_EF(Q) after max redunction} $\rk(\Fcal^{\geq \mu}) \geq \rk(\Qcal^{\geq \mu})$ for every $\mu \in \Q$.
\smallskip 

\item\label{integer slopes assumption for E, F and Q, decreasing c_EF(Q) after max redunction} all slopes of $\Ecal, \Fcal$ and $\Qcal$ are integers. 
\smallskip

\item\label{equal rank assumption for F and Q, decreasing c_EF(Q) after max redunction} $\rk(\Qcal) = \rk(\Fcal)$. 

\end{enumerate}
Let $\maxslopered{\Fcal}$ be the maximal slope reduction of $\Fcal$ to $\Qcal$. Then we have an inequality
\begin{equation}\label{max slope reduction inequality for cEF(Q)}
c_{\Ecal, \Fcal}(\Qcal) \geq c_{\Ecal, \maxslopered{\Fcal}}(\Qcal). 
\end{equation}
Moreover, equality in \eqref{max slope reduction inequality for cEF(Q)} never holds if $\Ecal, \Fcal$ and $\Qcal$ satisfy the following additional properties:
\begin{enumerate}[label=(\roman*), start = 6]
\item\label{no common slopes for E and F, decreasing c_EF(Q) after max reduction} $\Ecal$ and $\Fcal$ have no common slopes. 
\smallskip

\item\label{equality condition for rank inequality on E and F, decreasing c_EF(Q) after max reduction} For every $\mu \in \Q$, an equality $\rk(\Ecal^{\leq \mu}) = \rk(\Fcal^{\leq \mu})$ holds only when $\Ecal^{\leq \mu} \simeq \Fcal^{\leq \mu}$. 
\smallskip

\item\label{max slope condition for F and Q, decreasing c_EF(Q) after max reduction} $\mumax(\Fcal)>\mumax(\Qcal)$
\end{enumerate}
\end{prop}

\begin{proof}
Set $\lambda = \mumax(\Qcal)$ and $r = \rk(\Fcal^{>\lambda})$. By definition, we may write
\begin{equation}\label{max slope decomps for max slope reduction} 
\Fcal = \Fcal^{> \lambda} \oplus \Fcal^{\leq \lambda} \quad\quad \text{ and } \quad\quad \maxslopered{\Fcal} = \trivbundle(\lambda)^{\oplus r} \oplus \Fcal^{\leq \lambda}.
\end{equation}
Then we have
\begin{align*}
\deg(\Ecal^\vee \otimes \Fcal)^\nonneg &= \deg(\Ecal^\vee \otimes (\Fcal^{> \lambda} \oplus \Fcal^{\leq \lambda}))^\nonneg\\
&= \deg(\Ecal^\vee \otimes \Fcal^{>\lambda})^\nonneg + \deg(\Ecal^\vee \otimes \Fcal^{\leq \lambda})^\nonneg,\\
\deg(\Ecal^\vee \otimes \maxslopered{\Fcal})^\nonneg &= \deg(\Ecal^\vee \otimes (\trivbundle(\lambda)^{\oplus r} \oplus \Fcal^{\leq \lambda}))^\nonneg\\
&= \deg(\Ecal^\vee \otimes \trivbundle(\lambda)^{\oplus r})^\nonneg + \deg(\Ecal^\vee \otimes \Fcal^{\leq \lambda})^\nonneg,\\
\deg(\Qcal^\vee \otimes \Fcal)^\nonneg &= \deg(\Qcal^\vee \otimes (\Fcal^{> \lambda} \oplus \Fcal^{\leq \lambda}))^\nonneg\\
&= \deg(\Qcal^\vee \otimes \Fcal^{> \lambda})^\nonneg + \deg(\Qcal^\vee \otimes \Fcal^{\leq \lambda})^\nonneg, \\
\deg(\Qcal^\vee \otimes \maxslopered{\Fcal})^\nonneg &= \deg(\Qcal^\vee \otimes (\trivbundle(\lambda)^{\oplus r} \oplus \Fcal^{\leq \lambda}))^\nonneg\\
&= \deg(\Qcal^\vee \otimes \trivbundle(\lambda)^{\oplus r})^\nonneg + \deg(\Qcal^\vee \otimes \Fcal^{\leq \lambda})^\nonneg. 
\end{align*}
Thus we obtain
\begin{align}
\deg(\Ecal^\vee \otimes \Fcal)^\nonneg - \deg(\Ecal^\vee \otimes \maxslopered{\Fcal})^\nonneg &= \deg(\Ecal^\vee \otimes \Fcal^{>\lambda})^\nonneg - \deg(\Ecal^\vee \otimes \trivbundle(\lambda)^{\oplus r})^\nonneg, \label{max slope reduction difference of e,f terms}\\
\deg(\Qcal^\vee \otimes \Fcal)^\nonneg - \deg(\Qcal^\vee \otimes \maxslopered{\Fcal})^\nonneg &= \deg(\Qcal^\vee \otimes \Fcal^{> \lambda})^\nonneg - \deg(\Qcal^\vee \otimes \trivbundle(\lambda)^{\oplus r})^\nonneg. \label{max slope reduction difference of f,q terms}
\end{align}

Let us write $\HNvec(\Ecal) = (e_i), \HNvec(\Fcal^{> \lambda}) = (f_j), \HNvec(\Qcal) = (q_k), \HNvec(\trivbundle(\lambda)^{\oplus r}) = (\maxslopered{f})$, and set $f := \sum f_j$. Note that we can write $\maxslopered{f} = \sum \maxslopered{f}_j$ where $\maxslopered{f}_j$ denotes the vector obtained by reducing the slope of $f_j$ to $\lambda$.
By construction, we have the following observations:
\begin{enumerate}[label=(\arabic*)]
\item\label{equal x coordinates for max slope reduction HN vectors} $f_x = \rk(\Fcal^{> \lambda}) = r = \maxslopered{f}_x$
\smallskip

\item\label{comparison of y coordinates for max slope reduction HN vectors} $f_y \geq \maxslopered{f}_y$ with equality if and only if $f = \maxslopered{f} = 0$. 
\end{enumerate}

We now use Lemma \ref{degree in terms of HN vectors} to write the right side of \eqref{max slope reduction difference of e,f terms} as
\begin{equation}\label{max slope reduction difference of e,f terms HN vectors}
\deg(\Ecal^\vee \otimes \Fcal^{>\lambda})^\nonneg - \deg(\Ecal^\vee \otimes \trivbundle(\lambda)^{\oplus r})^\nonneg 
= \sum_{e_i \preceq f_j} e_i \times f_j - \sum_{\mu(e_i) \leq \lambda} e_i \times \maxslopered{f}.
\end{equation}
Note that each $e_i$ with $\mu(e_i) \leq \lambda$ satisfies $e_i \preceq f_j$ for all $j$ since by construction we have $\mu(f_j) > \lambda$ for all $j$. We then find
\begin{equation}\label{max slope reduction e,f terms fundamental inequality}
\sum_{\mu(e_i) \leq \lambda} e_i \times f_j \leq \sum_{e_i \preceq f_j} e_i \times f_j
\end{equation}
as each term on the right hand side is nonnegative. Now \eqref{max slope reduction difference of e,f terms HN vectors} yields
\begin{align*}
\deg(\Ecal^\vee \otimes \Fcal^{>\lambda})^\nonneg - \deg(\Ecal^\vee \otimes \trivbundle(\lambda)^{\oplus r})^\nonneg &\geq  \sum_{\mu(e_i) \leq \lambda} e_i \times f_j - \sum_{\mu(e_i) \leq \lambda} e_i \times \maxslopered{f}\\
&= \sum_{\mu(e_i) \leq \lambda} e_i \times \sum f_j  - \sum_{\mu(e_i) \leq \lambda} e_i \times \maxslopered{f} \\
&= \sum_{\mu(e_i) \leq \lambda} e_i \times (f - \maxslopered{f}).
\end{align*}
Since $(f - \maxslopered{f})_x = 0$ as noted in \ref{equal x coordinates for max slope reduction HN vectors}, we have
\[ \sum_{\mu(e_i) \leq \lambda} e_i \times (f - \maxslopered{f}) = \left( \sum_{\mu(e_i) \leq \lambda} e_i\right)_x \cdot (f - \maxslopered{f})_y = \rk(\Ecal^{\leq \lambda}) \cdot (f - \maxslopered{f})_y.\]
We thus obtain an inequality
\begin{equation*}
\deg(\Ecal^\vee \otimes \Fcal^{>\lambda})^\nonneg - \deg(\Ecal^\vee \otimes \trivbundle(\lambda)^{\oplus r})^\nonneg \geq \rk(\Ecal^{\leq \lambda}) \cdot (f - \maxslopered{f})_y
\end{equation*}
which is equivalent by \eqref{max slope reduction difference of e,f terms} to an inequality
\begin{equation}\label{max slope reduction difference of e,f terms lower bound}
\deg(\Ecal^\vee \otimes \Fcal)^\nonneg - \deg(\Ecal^\vee \otimes \maxslopered{\Fcal})^\nonneg \geq \rk(\Ecal^{\leq \lambda}) \cdot (f - \maxslopered{f})_y.
\end{equation}

Let us now use Lemma \ref{degree in terms of HN vectors} to write the right side of \eqref{max slope reduction difference of f,q terms} as
\begin{equation*}\label{max slope reduction difference of f,q terms HN vectors}
\deg(\Qcal^\vee \otimes \Fcal^{> \lambda})^\nonneg - \deg(\Qcal^\vee \otimes \trivbundle(\lambda)^{\oplus r})^\nonneg = \sum_{q_k \preceq f_j} q_k \times f_j - \sum_{\mu(q_k) \leq \lambda} q_k \times \maxslopered{f}.
\end{equation*}
Note that the conditions $q_k \preceq f_j$ and $\mu(q_k) \leq \lambda$ hold for all $j$ and $k$; indeed, by construction we have $\mu(q_k) \leq \mumax(\Qcal) = \lambda < \mu(f_j)$ for all $j$ and $k$. Hence we can simplify the above equation as
\begin{align*}
\deg(\Qcal^\vee \otimes \Fcal^{> \lambda})^\nonneg - \deg(\Qcal^\vee \otimes \trivbundle(\lambda)^{\oplus r})^\nonneg &= \sum q_k \times f_j - \sum q_k \times \maxslopered{f} \\
&= \sum q_k \times \sum f_j - \sum q_k \times \maxslopered{f}\\
&= \sum q_k \times (f - \maxslopered{f}).
\end{align*}
Now, as in the previous paragraph, we use the fact $(f_j - f'_j)_x = 0$ from \ref{equal x coordinates for max slope reduction HN vectors} to write
\[ \sum q_k \times (f - \maxslopered{f}) = \left(\sum q_k\right)_x - (f - \maxslopered{f})_y = \rk(\Qcal) \cdot (f - \maxslopered{f})_y\]
and consequently obtain an equation
\[\deg(\Qcal^\vee \otimes \Fcal^{> \lambda})^\nonneg - \deg(\Qcal^\vee \otimes \trivbundle(\lambda)^{\oplus r})^\nonneg = \rk(\Qcal) \cdot (f - \maxslopered{f})_y.\]
By \eqref{max slope reduction difference of f,q terms}, this equation is equivalent to 
\begin{equation}\label{max slope reduction difference of f,q terms simple form}
\deg(\Qcal^\vee \otimes \Fcal)^\nonneg - \deg(\Qcal^\vee \otimes \maxslopered{\Fcal})^\nonneg = \rk(\Qcal) \cdot (f - \maxslopered{f})_y.
\end{equation}

Note that we have
\[c_{\Ecal, \Fcal}(\Qcal) - c_{\Ecal, \maxslopered{\Fcal}}(\Qcal) = \big(\deg(\Ecal^\vee \otimes \Fcal)^\nonneg - \deg(\Ecal^\vee \otimes \maxslopered{\Fcal})^\nonneg\big) - \big(\deg(\Qcal^\vee \otimes \Fcal)^\nonneg - \deg(\Qcal^\vee \otimes \maxslopered{\Fcal})^\nonneg\big).\]
Hence \eqref{max slope reduction difference of e,f terms lower bound} and \eqref{max slope reduction difference of f,q terms simple form} together yields an inequality
\begin{equation}\label{max slope reduction difference of cEF(Q) lower bound}
c_{\Ecal, \Fcal}(\Qcal) - c_{\Ecal, \maxslopered{\Fcal}}(\Qcal)  \geq (\rk(\Ecal^{\leq \lambda}) - \rk(\Qcal)) \cdot (f - \maxslopered{f})_y.
\end{equation}
Now we observe $\Qcal = \Qcal^{\leq \mumax(\Qcal)} = \Qcal^{\leq \lambda}$ and find
\[\rk(\Ecal^{\leq \lambda}) - \rk(\Qcal) = \rk(\Ecal^{\leq \lambda}) - \rk(\Qcal^{\leq \lambda}) \geq 0\]
where the inequality follows from the assumption \ref{slope condition on E and Q, decreasing c_EF(Q) after max redunction}. 
Since we also have $(f - \maxslopered{f})_y \geq 0$ as noted in \ref{comparison of y coordinates for max slope reduction HN vectors}, we obtain
\begin{equation}\label{max slope redunction difference of cEF(Q) lower bound nonnegativity}
 (\rk(\Ecal^{\leq \lambda}) - \rk(\Qcal)) \cdot (f - \maxslopered{f})_y \geq 0
\end{equation}
We thus deduce the desired inequality \eqref{max slope reduction inequality for cEF(Q)}
from \eqref{max slope reduction difference of cEF(Q) lower bound} and \eqref{max slope redunction difference of cEF(Q) lower bound nonnegativity}.

It remains to prove the last statement of Proposition \ref{decreasing c_EF(Q) after max reduction}. For the rest of the proof, we therefore assume that $\Ecal, \Fcal$ and $\Qcal$ satisfy the properties \ref{no common slopes for E and F, decreasing c_EF(Q) after max reduction}, \ref{equality condition for rank inequality on E and F, decreasing c_EF(Q) after max reduction} and \ref{max slope condition for F and Q, decreasing c_EF(Q) after max reduction}. We also suppose for contradiction that equality in \eqref{max slope reduction inequality for cEF(Q)} holds. Then we must have equality in both \eqref{max slope reduction difference of cEF(Q) lower bound} and \eqref{max slope redunction difference of cEF(Q) lower bound nonnegativity}. The equality in \eqref{max slope redunction difference of cEF(Q) lower bound nonnegativity} gives us two cases to consider, namely
\begin{enumerate}[label=(\alph*)]
\item\label{case of equal ranks upto max slope} when $\rk(\Ecal^{\leq \lambda}) = \rk(\Qcal)$, 

\item\label{case of trivial max slope reduction} when $(f-\maxslopered{f})_y = 0$.
\end{enumerate}

We first investigate the case \ref{case of trivial max slope reduction}. The defining condition $(f-\maxslopered{f})_y = 0$ yields $f = 0$ by \ref{comparison of y coordinates for max slope reduction HN vectors}, which implies $\rk(\Fcal^{> \lambda}) = r = 0$ by \ref{equal x coordinates for max slope reduction HN vectors}. The decompositions \eqref{max slope decomps for max slope reduction} then yield $\Fcal = \maxslopered{\Fcal}$, implying $\mumax(\Fcal) = \mumax(\Qcal)$ by Lemma \ref{basic properties of max slope reduction}. 
We thus have a contradiction to the property \ref{max slope condition for F and Q, decreasing c_EF(Q) after max reduction}.

Let us now consider the case \ref{case of equal ranks upto max slope}. We may assume that $\Fcal^{> \lambda} \neq 0$, since otherwise we can argue as in the preceding paragraph to obtain a contradiction. 
The equality in \eqref{max slope reduction difference of cEF(Q) lower bound} implies equality in \eqref{max slope reduction difference of e,f terms lower bound}, and consequently equality in \eqref{max slope reduction e,f terms fundamental inequality}. Then every term on the right side of \eqref{max slope reduction e,f terms fundamental inequality} appears on the left side of \eqref{max slope reduction e,f terms fundamental inequality}, since every term in \eqref{max slope reduction e,f terms fundamental inequality} is positive by the property \ref{no common slopes for E and F, decreasing c_EF(Q) after max reduction}. Therefore every $e_i$ that satisfies $e_i \preceq f_j$ for some $j$ also satisfies $\mu(e_i) \leq \lambda$. In particular, we obtain $\mu(e_i) \leq \lambda$ for all $e_i$ with $e_i \preceq f_1$. Since $\mu(f_1) = \mumax(\Fcal^{>\lambda}) = \mumax(\Fcal)$, 
we deduce
\begin{equation}\label{E has no intermediate slopes for max slopep reduction}
\Ecal^{\leq \mumax(\Fcal)} = \Ecal^{\leq \lambda}.
\end{equation}
We then use the defining condition $\rk(\Ecal^{\leq \lambda}) = \rk(\Qcal)$ and the assumption \ref{equal rank assumption for F and Q, decreasing c_EF(Q) after max redunction} to find
\[ \rk(\Ecal^{\leq \mumax(\Fcal)}) = \rk(\Ecal^{\leq \lambda}) = \rk(\Qcal) = \rk(\Fcal) = \rk(\Fcal^{\leq \mumax(\Fcal)}),\]
which yields an isomorphism
\begin{equation}\label{isomorphism between E and F up to max slope of F}
\Ecal^{\leq \mumax(\Fcal)} \simeq \Fcal^{\leq \mumax(\Fcal)} = \Fcal.
\end{equation}
by the property \ref{equality condition for rank inequality on E and F, decreasing c_EF(Q) after max reduction}. Moreover, we observe $\Qcal = \Qcal^{\leq \mumax(\Qcal)} = \Qcal^{\leq \lambda}$ to rewrite the defining condition $\rk(\Ecal^{\leq \lambda}) = \rk(\Qcal)$ as $\rk(\Ecal^{\leq \lambda}) = \rk(\Qcal^{\leq \lambda})$, thereby obtaining another isomorphism 
\begin{equation}\label{isomorphism between E and Q up to max slope of Q}
\Ecal^{\leq \lambda} \simeq \Qcal^{\leq \lambda} = \Qcal
\end{equation}
by the assumption \ref{slope condition on E and Q, decreasing c_EF(Q) after max redunction}. Now \eqref{E has no intermediate slopes for max slopep reduction}, \eqref{isomorphism between E and F up to max slope of F} and \eqref{isomorphism between E and Q up to max slope of Q} together imply that $\Fcal \simeq \Qcal$, thereby yielding a contradiction to the property \ref{max slope condition for F and Q, decreasing c_EF(Q) after max reduction}. 
\end{proof}

The following proposition translates the results of Lemma \ref{assumptions of key inequality after max slope reduction} and Proposition \ref{decreasing c_EF(Q) after max reduction} in the setting of our slope reduction procedure. 

\begin{prop}\label{slope reduction single step}
Let $\Ecal, \Fcal$ and $\Qcal$ be nonzero vector bundles on $\adicff$ with the following properties: 
\begin{enumerate}[label=(\roman*)]
\item\label{slope condition on E and F without equality condition, slope reduction single step} $\rk(\Ecal^{\leq \mu}) \geq \rk(\Fcal^{\leq \mu})$ for every $\mu \in \Q$ 
\smallskip

\item\label{slope condition on E and Q, slope reduction single step} $\rk(\Ecal^{\leq \mu}) \geq \rk(\Qcal^{\leq \mu})$ for every $\mu \in \Q$ with equality only when $\Ecal^{\leq \mu} \simeq \Qcal^{\leq \mu}$. 
\smallskip

\item\label{slope condition on F and Q, slope reduction single step} $\rk(\Fcal^{\geq \mu}) \geq \rk(\Qcal^{\geq \mu})$ for every $\mu \in \Q$. 
\smallskip

\item\label{integer slopes assumption for E, F and Q, slope reduction single step} all slopes of $\Ecal, \Fcal$ and $\Qcal$ are integers. 
\smallskip

\item\label{equal rank assumption for F and Q, slope reduction single step} $\rk(\Qcal) = \rk(\Fcal)$. 

\end{enumerate}
Consider the decompositions 
\begin{equation}\label{max common factor decomps for F and Q in slope reduction step}
\Fcal \simeq \Ucal \oplus \Fcal' \quad\quad \text{ and } \quad\quad \Qcal \simeq \Ucal \oplus \Qcal'
\end{equation}
given by Lemma \ref{existence of maximal common factor decomp}. Assume that $\Qcal' \neq 0$, and let $\maxslopered{\Fcal}'$ denote the maximal slope reduction of $\Fcal'$ to $\Qcal'$.

\begin{enumerate}[label=(\arabic*)]
\item\label{invariance of assumptions in slope reduction steps} The properties \ref{slope condition on E and F without equality condition, slope reduction single step} - \ref{equal rank assumption for F and Q, slope reduction single step} are invariant under replacing $\Fcal$ by $\genslopered{\Fcal}:=\Ucal \oplus \maxslopered{\Fcal}'$. 
\smallskip

\item\label{statement of decreasing cEF(Q) in slope reduction steps} We have an inequality
\begin{equation}\label{slope reduction single step inequality for cEF(Q)}
c_{\Ecal, \Fcal}(\Qcal) \geq c_{\Ecal, \genslopered{\Fcal}}(\Qcal).
\end{equation}

\item\label{equality condition for cEF(Q) in slope reduction steps} 
The inequality \eqref{slope reduction single step inequality for cEF(Q)} becomes strict if $\Ecal$ and $\Fcal$ satisfy the following additional properties:
\begin{enumerate}[label = (\roman*), start = 6]
\smallskip
\item\label{no common slopes for E and F, slope reduction single step} $\Ecal$ and $\Fcal$ have no common slopes. 
\smallskip

\item\label{equality condition for rank inequality on E and F, slope reduction single step} For every $\mu \in \Q$, an equality $\rk(\Ecal^{\leq \mu}) = \rk(\Fcal^{\leq \mu})$ holds only when $\Ecal^{\leq \mu} \simeq \Fcal^{\leq \mu}$. 
\end{enumerate}
\end{enumerate}
\end{prop}

\begin{figure}[H]
\begin{tikzpicture}	
		\coordinate (left) at (0, 0);
		\coordinate (q1) at (2.5, 3.5);
		\coordinate (q2) at (4, 4);
		\coordinate (q3) at (5, 3.7);

		\coordinate (p0) at (0.5, 1.5);
		\coordinate (p1) at (1.5, 2.5);
		\coordinate (p2) at (3.5, 3);
		\coordinate (p3) at (4.5, 2.5);
		\coordinate (p4) at (5, 1.5);
				
		\draw[step=1cm,thick, color=red] (left) -- (p0) --  (p1);
		\draw[step=1cm,thick, color=blue] (p1) -- (q1) -- (q2) -- (q3);
		\draw[step=1cm,thick, color=green] (p1) -- (p2) -- (p3) -- (p4);

		\draw [fill] (q1) circle [radius=0.05];		
		\draw [fill] (q2) circle [radius=0.05];		
		\draw [fill] (q3) circle [radius=0.05];		
		\draw [fill] (left) circle [radius=0.05];
		
		\draw [fill] (p0) circle [radius=0.05];		
		\draw [fill] (p1) circle [radius=0.05];		
		\draw [fill] (p2) circle [radius=0.05];		
		\draw [fill] (p3) circle [radius=0.05];		
		\draw [fill] (p4) circle [radius=0.05];		


		
		\path (q3) ++(0.2, -0.4) node {$\HN(\Fcal)$};
		\path (p4) ++(-0.2, -0.4) node {$\HN(\Qcal)$};
		\path (left) ++(-0.3, -0.05) node {$O$};

		\path (p0) ++(-0.2, 0.3) node {\color{red}$\Ucal$};
		\path (q1) ++(0, 0.3) node {\color{blue}$\Fcal'$};
		\path (p2) ++(0, -0.4) node {\color{green}$\Qcal'$};

\end{tikzpicture}
\begin{tikzpicture}[scale=0.4]
        \pgfmathsetmacro{\textycoordinate}{6}
		\draw[->, line width=0.6pt] (0, \textycoordinate) -- (1.5,\textycoordinate);
		\draw (0,0) circle [radius=0.00];	
        \hspace{0.2cm}
\end{tikzpicture}
\begin{tikzpicture}
		\pgfmathsetmacro{\reducedycoordinate}{2.5/4+2.5}
	
		\coordinate (left) at (0, 0);
		\coordinate (q1) at (2.5, 3.5);
		\coordinate (q2) at (4, 4);
		\coordinate (q3) at (5, 3.7);

		\coordinate (q2') at (4, \reducedycoordinate);
		\coordinate (q3') at (5, \reducedycoordinate-0.3);

		\coordinate (p0) at (0.5, 1.5);
		\coordinate (p1) at (1.5, 2.5);
		\coordinate (p2) at (3.5, 3);
		\coordinate (p3) at (4.5, 2.5);
		\coordinate (p4) at (5, 1.5);
				
		\draw[step=1cm,thick, color=red] (left) -- (p0) --  (p1);
		\draw[step=1cm,thick,dashed, color=blue] (p1) -- (q1) -- (q2) -- (q3);
		\draw[step=1cm,thick, color=blue] (p1) -- (q2') -- (q3');
		\draw[step=1cm,thick, color=green] (p2) -- (p3) -- (p4);

		\draw [fill] (q1) circle [radius=0.05];		
		\draw [fill] (q2) circle [radius=0.05];		
		\draw [fill] (q3) circle [radius=0.05];		
		\draw [fill] (left) circle [radius=0.05];

		\draw [fill] (q2') circle [radius=0.05];		
		\draw [fill] (q3') circle [radius=0.05];	
		
		\draw [fill] (p0) circle [radius=0.05];		
		\draw [fill] (p1) circle [radius=0.05];		
		\draw [fill] (p2) circle [radius=0.05];		
		\draw [fill] (p3) circle [radius=0.05];		
		\draw [fill] (p4) circle [radius=0.05];	

		\draw[->, line width=0.6pt, color=blue] (3, 3.5) -- (3,3);	


		
		\path (q3') ++(0.6, -0.4) node {$\HN(\genslopered{\Fcal})$};
		\path (p4) ++(-0.2, -0.4) node {$\HN(\Qcal)$};
		\path (left) ++(-0.3, -0.05) node {$O$};

		\path (p0) ++(-0.2, 0.3) node {\color{red}$\Ucal$};
		\path (q2') ++(0, 0.3) node {\color{blue}$\maxslopered{\Fcal}'$};

\end{tikzpicture}
\caption{Illustration of the constructions in Proposition \ref{slope reduction single step}}
\end{figure}

\begin{proof}
Let us first verify that all constructions in the statement are valid. The validity of the decompositions \eqref{max common factor decomps for F and Q in slope reduction step} relies on slopewise dominance of $\Fcal$ on $\Qcal$, which follows from the property \ref{slope condition on F and Q, slope reduction single step} by Lemma \ref{slopewise dominance and rank inequalities}. For the validity of the maximal slope reduction of $\Fcal'$ to $\Qcal'$, we verify slopewise dominance of $\Fcal'$ on $\Qcal'$ by Lemma \ref{existence of maximal common factor decomp} and integer slopes of $\Fcal'$ and $\Qcal'$ by the property  \ref{integer slopes assumption for E, F and Q, slope reduction single step}.

We assert that the properties \ref{slope condition on E and F without equality condition, slope reduction single step} - \ref{equal rank assumption for F and Q, slope reduction single step} yield the corresponding properties for $\Ecal, \Fcal'$ and $\Qcal'$ as follows:
\begin{enumerate}[label=(\roman*)']
\item\label{slope condition on E and F' without equality condition} $\rk(\Ecal^{\leq \mu}) \geq \rk(\Fcal'^{\leq \mu})$ for every $\mu \in \Q$.
\smallskip

\item\label{slope condition on E and Q'} $\rk(\Ecal^{\leq \mu}) \geq \rk(\Qcal'^{\leq \mu})$ for every $\mu \in \Q$ with equality only when $\Ecal^{\leq \mu} \simeq \Qcal'^{\leq \mu}$. 
\smallskip

\item\label{slope condition on F' and Q'} $\rk(\Fcal'^{\geq \mu}) \geq \rk(\Qcal'^{\geq \mu})$ for every $\mu \in \Q$. 
\smallskip

\item\label{integer slopes assumption for E, F' and Q'} all slopes of $\Ecal, \Fcal'$ and $\Qcal'$ are integers. 
\smallskip

\item\label{equal rank assumption for F' and Q'} $\rk(\Qcal') = \rk(\Fcal')$. 

\end{enumerate}
We only need to check the properties \ref{slope condition on E and Q'} - \ref{equal rank assumption for F' and Q'} since the property \ref{slope condition on E and F' without equality condition} will then follow as a formal consequence of these properties as in the proof of Lemma \ref{assumptions of key inequality after max slope reduction}. The property \ref{slope condition on E and Q'} follows from the property \ref{slope condition on E and Q, slope reduction single step} since $\Qcal' = \Qcal^{\leq \lambda}$ with $\lambda = \mumax(\Qcal')$. The property \ref{slope condition on F' and Q'} is equivalent to slopewise dominance of $\Fcal'$ on $\Qcal'$ which follows from Lemma \ref{existence of maximal common factor decomp}. The properties \ref{integer slopes assumption for E, F' and Q'} and \ref{equal rank assumption for F' and Q'} follow immediately from the corresponding properties \ref{integer slopes assumption for E, F and Q, slope reduction single step} and \ref{equal rank assumption for F and Q, slope reduction single step} by construction. 

With the properties \ref{slope condition on E and F' without equality condition} - \ref{equal rank assumption for F' and Q'} established, Lemma \ref{assumptions of key inequality after max slope reduction} and Proposition \ref{decreasing c_EF(Q) after max reduction} now yield the following facts:
\begin{enumerate}[label=(\arabic*)']
\item\label{invariance of assumptions for F' and Q' in slope reduction steps} The properties \ref{slope condition on E and F' without equality condition} - \ref{equal rank assumption for F' and Q'} are invariant under replacing $\Fcal'$ by $\maxslopered{\Fcal}'$. 

\item\label{statement of decreasing cEF'(Q') in slope reduction steps} We have an inequality
\begin{equation}\label{slope reduction step inequality for cEF'(Q')}
c_{\Ecal, \Fcal'}(\Qcal') \geq c_{\Ecal, \maxslopered{\Fcal}'}(\Qcal').
\end{equation}

\item\label{equality condition for cEF'(Q') in slope reduction steps} 
The inequality \eqref{slope reduction step inequality for cEF'(Q')} becomes strict if $\Ecal, \Fcal'$ and $\Qcal'$ satisfy the following additional properties:
\smallskip
\begin{enumerate}[label=(\roman*)', start = 6]
\item\label{no common slopes for E and F'} $\Ecal$ and $\Fcal'$ have no common slopes. 
\smallskip

\item\label{equality condition for rank inequality on E and F'} For every $\mu \in \Q$, an equality $\rk(\Ecal^{\leq \mu}) = \rk(\Fcal'^{\leq \mu})$ holds only when $\Ecal^{\leq \mu} \simeq \Fcal^{\leq \mu}$. 
\smallskip

\item\label{max slope condition on F' and Q'} $\mumax(\Fcal') > \mumax(\Qcal')$. 
\end{enumerate}
\end{enumerate}
We wish to deduce the statements \ref{invariance of assumptions in slope reduction steps}, \ref{statement of decreasing cEF(Q) in slope reduction steps} and \ref{equality condition for cEF(Q) in slope reduction steps} respectively from the above facts \ref{invariance of assumptions for F' and Q' in slope reduction steps}, \ref{statement of decreasing cEF'(Q') in slope reduction steps} and \ref{equality condition for cEF'(Q') in slope reduction steps}.

Let us now prove the statement \ref{invariance of assumptions in slope reduction steps}. Note that, as in the proof of Lemma \ref{assumptions of key inequality after max slope reduction}, we only need to show the invariance of the properties \ref{slope condition on E and Q, slope reduction single step} - \ref{equal rank assumption for F and Q, slope reduction single step}.
The invariance of the property \ref{slope condition on E and Q, slope reduction single step} is evident since $\Ecal$ and $\Qcal$ remain unchanged. For the invariance of the remaining properties \ref{slope condition on F and Q, slope reduction single step}, \ref{integer slopes assumption for E, F and Q, slope reduction single step} and \ref{equal rank assumption for F and Q, slope reduction single step}, we have to show that $\Ecal, \genslopered{\Fcal}$ and $\Qcal$ satisfy the following properties: 
\begin{enumerate}[label=$\genslopered{\text{(\roman*)}}$, start=3]
\item\label{slope condition on F and Q after slope reduction} $\rk(\genslopered{\Fcal}^{\geq \mu}) \geq \rk(\Qcal^{\geq \mu})$ for every $\mu \in \Q$. 
\smallskip

\item\label{integer slopes assumption for E, F and Q, after slope reduction} all slopes of $\Ecal, \genslopered{\Fcal}$ and $\Qcal$ are integers. 
\smallskip

\item\label{equal rank assumption for F and Q, after slope reduction} $\rk(\Qcal) = \rk(\genslopered{\Fcal})$. 

\end{enumerate}
After writing $\genslopered{\Fcal} = \Ucal \oplus \maxslopered{\Fcal}'$ by definition and also $\Qcal = \Ucal \oplus \Qcal'$ as in \eqref{max common factor decomps for F and Q in slope reduction step}, we deduce all of these properties from the invariance of the properties \ref{slope condition on F' and Q'}, \ref{integer slopes assumption for E, F' and Q'}  and \ref{equal rank assumption for F' and Q'} noted in \ref{invariance of assumptions for F' and Q' in slope reduction steps}

We move on to the statement \ref{statement of decreasing cEF(Q) in slope reduction steps}. Since $\Qcal' \neq 0$ by our assumption, Lemma \ref{existence of maximal common factor decomp} yields 
\begin{equation}\label{order of slopes in max common factor decomps for slope reduction}
\mumin(\Ucal) \geq \mumax(\Fcal') > \mumax(\Qcal') = \mumax(\maxslopered{\Fcal}') \quad\quad \text{ if } \Ucal \neq 0.
\end{equation}
Then by Corollary \ref{zero degree for completely dominating slopes} we obtain
\begin{equation}\label{zero degree formula for max common factor decomps}
\deg(\Ucal^\vee \otimes \Fcal')^\nonneg = \deg(\Ucal^\vee \otimes \maxslopered{\Fcal}')^\nonneg = 0.
\end{equation}
Note that \eqref{zero degree formula for max common factor decomps} does not require the condition $\Ucal \neq 0$ from \eqref{order of slopes in max common factor decomps for slope reduction} since it evidently holds when $\Ucal = 0$. Now we use \eqref{zero degree formula for max common factor decomps} and the decompositions in \eqref{max common factor decomps for F and Q in slope reduction step} to find
\begin{equation*}
\begin{aligned}
\deg(\Ecal^\vee \otimes \Fcal)^\nonneg &= \deg(\Ecal^\vee \otimes (\Ucal \oplus \Fcal'))^\nonneg\\
&= \deg(\Ecal^\vee \otimes \Ucal)^\nonneg + \deg(\Ecal^\vee \otimes \Fcal')^\nonneg,\\
\deg(\Ecal^\vee \otimes \genslopered{\Fcal})^\nonneg &= \deg(\Ecal^\vee \otimes (\Ucal \oplus \maxslopered{\Fcal}'))^\nonneg\\
&= \deg(\Ecal^\vee \otimes \Ucal)^\nonneg + \deg(\Ecal^\vee \oplus \maxslopered{\Fcal}')^\nonneg,\\
\deg(\Qcal^\vee \otimes \Fcal)^\nonneg &= \deg((\Ucal \oplus \Qcal')^\vee \otimes (\Ucal \oplus \Fcal'))^\nonneg\\
&=  \deg(\Ucal^\vee \otimes \Ucal)^\nonneg + \deg(\Qcal'^\vee \otimes \Ucal)^\nonneg + \deg(\Ucal^\vee \otimes \Fcal')^\nonneg + \deg(\Qcal'^\vee \otimes \Fcal')^\nonneg \\
&=  \deg(\Ucal^\vee \otimes \Ucal)^\nonneg + \deg(\Qcal'^\vee \otimes \Ucal)^\nonneg + \deg(\Qcal'^\vee \otimes \Fcal')^\nonneg, \\
\deg(\Qcal^\vee \otimes \genslopered{\Fcal})^\nonneg &= \deg((\Ucal \oplus \Qcal')^\vee \otimes (\Ucal \oplus \maxslopered{\Fcal}'))^\nonneg\\
&=  \deg(\Ucal^\vee \otimes \Ucal)^\nonneg + \deg(\Qcal'^\vee \otimes \Ucal)^\nonneg + \deg(\Ucal^\vee \otimes \maxslopered{\Fcal}')^\nonneg + \deg(\Qcal'^\vee \otimes \maxslopered{\Fcal}')^\nonneg\\
&=  \deg(\Ucal^\vee \otimes \Ucal)^\nonneg + \deg(\Qcal'^\vee \otimes \Ucal)^\nonneg +  \deg(\Qcal'^\vee \otimes \maxslopered{\Fcal}')^\nonneg.
\end{aligned}
\end{equation*}
Therefore we have
\begin{align}
c_{\Ecal, \Fcal}(\Qcal) - c_{\Ecal, \genslopered{\Fcal}}(\Qcal) &= \big(\deg(\Ecal^\vee \otimes \Fcal)^\nonneg - \deg(\Ecal^\vee \otimes \genslopered{\Fcal})^\nonneg\big) - \big(\deg(\Qcal^\vee \otimes \Fcal)^\nonneg - \deg(\Qcal^\vee \otimes \genslopered{\Fcal})^\nonneg\big) \nonumber\\
&= \big(\deg(\Ecal^\vee \otimes \Fcal')^\nonneg - \deg(\Ecal^\vee \otimes \maxslopered{\Fcal}')^\nonneg\big) - \big(\deg(\Qcal'^\vee \otimes \Fcal')^\nonneg - \deg(\Qcal'^\vee \otimes \maxslopered{\Fcal}')^\nonneg \big) \nonumber\\
&= c_{\Ecal, \Fcal'}(\Qcal') - c_{\Ecal, \maxslopered{\Fcal}'}(\Qcal'). \label{slope reduction single step cEF(Q)}
\end{align}
Hence the statement \ref{statement of decreasing cEF(Q) in slope reduction steps} now follows directly from the fact \ref{statement of decreasing cEF'(Q') in slope reduction steps}.

We now consider the final statement \ref{equality condition for cEF(Q) in slope reduction steps}. In accordance with the statement, we assume that $\Ecal$ and $\Fcal$ satisfy the properties \ref{no common slopes for E and F, slope reduction single step} and \ref{equality condition for rank inequality on E and F, slope reduction single step}. By \ref{slope reduction single step cEF(Q)}, the inequality \eqref{slope reduction single step inequality for cEF(Q)} becomes strict if and only if the inequality \eqref{slope reduction step inequality for cEF'(Q')} is strict. Therefore we can prove the statement \ref{equality condition for cEF(Q) in slope reduction steps} by verifying that $\Ecal, \Fcal'$ and $\Qcal'$ satisfy the properties \ref{no common slopes for E and F'}, \ref{equality condition for rank inequality on E and F'} and \ref{max slope condition on F' and Q'} as stated in the fact \ref{equality condition for cEF'(Q') in slope reduction steps}. The property \ref{no common slopes for E and F'} follows from the property \ref{no common slopes for E and F, slope reduction single step} since $\Fcal'$ is a direct summand of $\Fcal$ by construction. Moreover, by Lemma \ref{existence of maximal common factor decomp} the property \ref{max slope condition on F' and Q'} follows from our assumption $\Qcal' \neq 0$. Hence it remains to verify the property \ref{equality condition for rank inequality on E and F'}. Suppose that we have $\rk(\Ecal^{\leq \mu}) = \rk (\Fcal'^{\leq \mu})$ for some $\mu$. We wish to prove that $\Ecal^{\leq \mu} \simeq \Fcal'^{\leq \mu}$. Since $\Fcal'$ is a direct summand of $\Fcal$, we find
\begin{equation}\label{slope condition on F and F' in slope reduction single step}
\rk(\Fcal'^{\leq \mu}) \leq \rk(\Fcal^{\leq \mu}) \quad\quad \text{ for every } \mu \in \Q
\end{equation}
with equality if and only if $\Fcal'^{\leq \mu} = \Fcal^{\leq \mu}$. We thus obtain a series of inequalities 
\begin{equation}\label{rank inequality on E, F and F'}
\rk(\Fcal'^{\leq \mu}) \leq \rk(\Fcal^{\leq \mu}) \leq \rk(\Ecal^{\leq \mu})  \quad\quad \text{ for every } \mu \in \Q
\end{equation}
combining \eqref{slope condition on F and F' in slope reduction single step} and the property \ref{slope condition on E and F without equality condition, slope reduction single step}. Now the equality $\rk(\Ecal^{\leq \mu}) = \rk (\Fcal'^{\leq \mu})$ implies that both equalities should hold in \eqref{rank inequality on E, F and F'}. Hence the equality condition for \eqref{slope condition on F and F' in slope reduction single step} and the property \ref{equality condition for rank inequality on E and F, slope reduction single step} together yield
\[\Fcal'^{\leq \mu} = \Fcal^{\leq \mu} \simeq \Ecal^{\leq \mu}.\]
We thus complete the proof. 
\end{proof}

We are finally ready to complete Step 3. 

\begin{prop}
Proposition \ref{key inequality} holds under the additional assumptions that $\rk(\Qcal) = \rk(\Fcal)$ and that all slopes of $\Ecal, \Fcal$ and $\Qcal$ are integers. 
\end{prop}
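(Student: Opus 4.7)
The plan is to iterate the slope reduction procedure of Proposition \ref{slope reduction single step} starting from $\Fcal_0 := \Fcal$, setting $\Fcal_{n+1} := \genslopered{\Fcal}_n$ whenever the complement $\Qcal_n'$ arising in the decomposition of Lemma \ref{existence of maximal common factor decomp} for the pair $(\Fcal_n, \Qcal)$ is nonzero, and terminating as soon as $\Qcal_n' = 0$. Proposition \ref{slope reduction single step}(1) ensures that assumptions (i)--(v) of Proposition \ref{slope reduction single step} are preserved at each step, so the procedure is well-defined throughout; Proposition \ref{slope reduction single step}(2) then makes the sequence $c_{\Ecal, \Fcal_n}(\Qcal)$ non-increasing.

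Next I would show that the procedure terminates in at most $\rk(\Qcal)$ steps. The key observation is that the maximal slope reduction equalizes the top slope, $\mumax(\maxslopered{\Fcal}_n') = \mumax(\Qcal_n')$ by Lemma \ref{basic properties of max slope reduction}, so when we re-run the common-factor decomposition of Lemma \ref{existence of maximal common factor decomp} for the pair $(\Fcal_{n+1}, \Qcal)$, the top isotypic block of $\Qcal_n'$ gets absorbed into the new common factor $\Ucal_{n+1}$ on top of the already-common $\Ucal_n$. Hence $\rk(\Ucal_{n+1}) > \rk(\Ucal_n)$, which forces termination since $\rk(\Ucal_n) \leq \rk(\Qcal)$. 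At termination, $\Qcal \simeq \Ucal_N$, and the rank identity $\rk(\Fcal_N) = \rk(\Fcal) = \rk(\Qcal) = \rk(\Ucal_N)$ from Proposition \ref{slope reduction single step}(1) forces the complement of $\Ucal_N$ in $\Fcal_N$ to vanish, yielding $\Fcal_N \simeq \Qcal$ and $c_{\Ecal, \Fcal_N}(\Qcal) = 0$. Combined with monotonicity, this gives $c_{\Ecal, \Fcal}(\Qcal) \geq 0$.

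For the equality clause, the case $\Fcal \simeq \Qcal$ is trivial. If $\Fcal \not\simeq \Qcal$, then $\Qcal_0' \neq 0$, so the first slope reduction step is genuinely applied, and I would invoke Proposition \ref{slope reduction single step}(3) at $n = 0$ to obtain a strict decrease $c_{\Ecal, \Fcal}(\Qcal) > c_{\Ecal, \Fcal_1}(\Qcal) \geq 0$. The point to check here is that the hypothesis of part (3) is exactly the equality clause of assumption (i) in Proposition \ref{key inequality}, which we have at the starting step.

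The main subtlety, and the reason this is not just a routine induction, is that the equality condition $\Ecal^{\leq \mu} \simeq \Fcal^{\leq \mu}$ required for part (3) of Proposition \ref{slope reduction single step} is \emph{not} among the invariants preserved by the procedure (cf.\ the list of invariants in Lemma \ref{assumptions of key inequality after max slope reduction}). Thus we cannot hope to get strict decrease at every step; the argument is arranged so that a single strict decrease at the first step, where the original hypothesis of Proposition \ref{key inequality} is available, is already enough to rule out equality, while the remaining (possibly weak) decreases along the rest of the chain still deliver the non-strict inequality.
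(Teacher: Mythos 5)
Your proposal is correct and follows essentially the same path as the paper: iterate the slope reduction of Proposition \ref{slope reduction single step} starting from $\Fcal_0 = \Fcal$, use parts (1)--(2) to propagate the hypotheses and get a non-increasing sequence $c_{\Ecal,\Fcal_n}(\Qcal)$, show termination by observing that the common factor $\Ucal_n$ strictly grows in rank (your identification of the absorbed piece as containing the top isotypic block of $\Qcal_n'$ is a slightly more explicit version of the paper's argument that the new common part $\Tcal_n$ is nonzero), conclude $\Fcal_N\simeq\Qcal$ from equality of ranks, and finally rule out equality by applying part (3) only at the first step, where the equality clause of hypothesis (i) is genuinely available. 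You also correctly flag the central subtlety, namely that the equality clause of (i) is not among the invariants of Lemma \ref{assumptions of key inequality after max slope reduction} and so strict decrease is only guaranteed at $n=0$ — this is precisely the point the paper makes.
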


\begin{proof}
Let $\Ecal, \Fcal$ and $\Qcal$ be vector bundles on $\adicff$ satisfying the following properties:
\begin{enumerate}[label=(\roman*)]
\item\label{slope condition on E and F, slope reduction initial state} $\rk(\Ecal^{\leq \mu}) \geq \rk(\Fcal^{\leq \mu})$ for every $\mu \in \Q$ with equality only when $\Ecal^{\leq \mu} \simeq \Fcal^{\leq \mu}$. 
\smallskip

\item\label{slope condition on E and Q, slope reduction initial state} $\rk(\Ecal^{\leq \mu}) \geq \rk(\Qcal^{\leq \mu})$ for every $\mu \in \Q$ with equality only when $\Ecal^{\leq \mu} \simeq \Qcal^{\leq \mu}$. 
\smallskip

\item\label{slope condition on F and Q, slope reduction initial state} $\rk(\Fcal^{\geq \mu}) \geq \rk(\Qcal^{\geq \mu})$ for every $\mu \in \Q$. 
\smallskip

\item\label{integer slopes assumption for E, F and Q, slope reduction initial state} all slopes of $\Ecal, \Fcal$ and $\Qcal$ are integers. 
\smallskip

\item\label{equal rank assumption for F and Q, slope reduction initial state} $\rk(\Qcal) = \rk(\Fcal)$. 
\smallskip

\item\label{no common slopes for E and F, slope reduction initial state} $\Ecal$ and $\Fcal$ have no common slopes. 
\end{enumerate}
We wish to prove that the inequality \eqref{deg inequality for surj} holds with equality if and only if $\Qcal = \Fcal$. 

Let us define a sequence $(\Fcal_n)$ of vector bundles on $\adicff$ as follows:
\begin{enumerate}[label=(\Roman*)]
\item\label{slope reduction initial state} Set $\Fcal_0 := \Fcal$. 

\item\label{slope reduction inductive state} For each $n \geq 0$, consider the decompositions
\begin{equation}\label{max common factor decomps for Fn and Q, slope reduction step}
\Fcal_n \simeq \Ucal_n \oplus \Fcal_n' \quad\quad \text{ and } \quad\quad \Qcal \simeq \Ucal_n \oplus \Qcal_n'
\end{equation}
given by Lemma \ref{existence of maximal common factor decomp}. If $\Qcal_n' = 0$, we make $\Fcal_n$ the final term of the sequence. Otherwise, we set
\[\Fcal_{n+1} := \Ucal_n \oplus \maxslopered{\Fcal}_n'\]
where $\maxslopered{\Fcal}_n'$ denotes the maximal slope reduction of $\Fcal'_n$ to $\Qcal'_n$. 
\end{enumerate}
An induction argument using Proposition \ref{slope reduction single step} yields the following facts:
\begin{enumerate}[label=(\alph*)]
\item The sequence $(\Fcal_n)$ is well-defined with the following properties:
\begin{enumerate}[label=$(\text{\roman*}_n)$]
\item\label{slope condition on E and Fn without equality condition, slope reduction process} $\rk(\Ecal^{\leq \mu}) \geq \rk(\Fcal_n^{\leq \mu})$ for every $\mu \in \Q$ 
\smallskip

\item\label{slope condition on E and Q, slope reduction process} $\rk(\Ecal^{\leq \mu}) \geq \rk(\Qcal^{\leq \mu})$ for every $\mu \in \Q$ with equality only when $\Ecal^{\leq \mu} \simeq \Qcal^{\leq \mu}$. 
\smallskip

\item\label{slope condition on Fn and Q, slope reduction process} $\rk(\Fcal_n^{\geq \mu}) \geq \rk(\Qcal^{\geq \mu})$ for every $\mu \in \Q$. 
\smallskip

\item\label{integer slopes assumption for E, Fn and Q, slope reduction process} all slopes of $\Ecal, \Fcal_n$ and $\Qcal$ are integers. 
\smallskip

\item\label{equal rank assumption for Fn and Q, slope reduction process} $\rk(\Qcal) = \rk(\Fcal_n)$. 

\end{enumerate}
\smallskip

\item\label{statement of decreasing cEF(Q) in slope reduction process} We have an inequality
\begin{equation}\label{slope reduction step inequality for cEFn(Q)}
c_{\Ecal, \Fcal_n}(\Qcal) \geq c_{\Ecal, \Fcal_{n+1}}(\Qcal).
\end{equation}
\smallskip

\item\label{equality condition for cEF(Q) in slope reduction process} The inequality \eqref{slope reduction step inequality for cEFn(Q)} is strict if $\Ecal$ and $\Fcal_n$ satisfy the following additional properties:
\smallskip
\begin{enumerate}[label = $(\text{\roman*}_n)$, start = 6]
\item\label{no common slopes for E and Fn} $\Ecal$ and $\Fcal_n$ have no common slopes. 
\smallskip

\item\label{equality condition for rank inequality on E and Fn} For every $\mu \in \Q$, an equality $\rk(\Ecal^{\leq \mu}) = \rk(\Fcal_n^{\leq \mu})$ holds only when $\Ecal^{\leq \mu} \simeq \Fcal_n^{\leq \mu}$. 
\end{enumerate}
\end{enumerate}

\begin{figure}[H]
\begin{tikzpicture}	
		\coordinate (left) at (0, 0);
		\coordinate (q1) at (2.5, 3.5);
		\coordinate (q2) at (4, 4);
		\coordinate (q3) at (5, 3.7);

		\coordinate (p0) at (0.5, 1.5);
		\coordinate (p1) at (1.5, 2.5);
		\coordinate (p2) at (3.5, 3);
		\coordinate (p3) at (4.5, 2.5);
		\coordinate (p4) at (5, 1.5);
				
		\draw[step=1cm,thick, color=red] (left) -- (p0) --  (p1);
		\draw[step=1cm,thick, color=blue] (p1) -- (q1) -- (q2) -- (q3);
		\draw[step=1cm,thick, color=green] (p1) -- (p2) -- (p3) -- (p4);

		\draw [fill] (q1) circle [radius=0.05];		
		\draw [fill] (q2) circle [radius=0.05];		
		\draw [fill] (q3) circle [radius=0.05];		
		\draw [fill] (left) circle [radius=0.05];
		
		\draw [fill] (p0) circle [radius=0.05];		
		\draw [fill] (p1) circle [radius=0.05];		
		\draw [fill] (p2) circle [radius=0.05];		
		\draw [fill] (p3) circle [radius=0.05];		
		\draw [fill] (p4) circle [radius=0.05];		


		
		\path (q3) ++(0.2, -0.4) node {$\HN(\Fcal_n)$};
		\path (p4) ++(-0.2, -0.4) node {$\HN(\Qcal)$};
		\path (left) ++(-0.3, -0.05) node {$O$};

		\path (p0) ++(-0.2, 0.3) node {\color{red}$\Ucal_n$};
		\path (q1) ++(0, 0.3) node {\color{blue}$\Fcal_n'$};
		\path (p2) ++(0, -0.4) node {\color{green}$\Qcal_n'$};

\end{tikzpicture}
\begin{tikzpicture}[scale=0.4]
        \pgfmathsetmacro{\textycoordinate}{6}
		\draw[->, line width=0.6pt] (0, \textycoordinate) -- (1.5,\textycoordinate);
		\draw (0,0) circle [radius=0.00];	
        \hspace{0.2cm}
\end{tikzpicture}
\begin{tikzpicture}
		\pgfmathsetmacro{\reducedycoordinate}{2.5/4+2.5}
	
		\coordinate (left) at (0, 0);
		\coordinate (q1) at (2.5, 3.5);
		\coordinate (q2) at (4, 4);
		\coordinate (q3) at (5, 3.7);

		\coordinate (q2') at (4, \reducedycoordinate);
		\coordinate (q3') at (5, \reducedycoordinate-0.3);

		\coordinate (p0) at (0.5, 1.5);
		\coordinate (p1) at (1.5, 2.5);
		\coordinate (p2) at (3.5, 3);
		\coordinate (p3) at (4.5, 2.5);
		\coordinate (p4) at (5, 1.5);
				
		\draw[step=1cm,thick, color=red] (left) -- (p0) --  (p1);
		\draw[step=1cm,thick,dashed, color=blue] (p1) -- (q1) -- (q2) -- (q3);
		\draw[step=1cm,thick, color=blue] (p1) -- (q2') -- (q3');
		\draw[step=1cm,thick, color=green] (p2) -- (p3) -- (p4);

		\draw [fill] (q1) circle [radius=0.05];		
		\draw [fill] (q2) circle [radius=0.05];		
		\draw [fill] (q3) circle [radius=0.05];		
		\draw [fill] (left) circle [radius=0.05];

		\draw [fill] (q2') circle [radius=0.05];		
		\draw [fill] (q3') circle [radius=0.05];	
		
		\draw [fill] (p0) circle [radius=0.05];		
		\draw [fill] (p1) circle [radius=0.05];		
		\draw [fill] (p2) circle [radius=0.05];		
		\draw [fill] (p3) circle [radius=0.05];		
		\draw [fill] (p4) circle [radius=0.05];	

		\draw[->, line width=0.6pt, color=blue] (3, 3.5) -- (3,3);	


		
		\path (q3') ++(1.1, 0) node {$\HN(\Fcal_{n+1})$};
		\path (p4) ++(-0.2, -0.4) node {$\HN(\Qcal)$};
		\path (left) ++(-0.3, -0.05) node {$O$};

		\path (p0) ++(-0.2, 0.3) node {\color{red}$\Ucal_n$};
		\path (q2') ++(0, 0.3) node {\color{blue}$\maxslopered{\Fcal}_n'$};
		\path (p3) ++(-0.3, -0.4) node {\color{green}$\Qcal'_{n+1}$};

\end{tikzpicture}
\caption{Construction of the sequence $(\Fcal_n)$}
\end{figure}

We assert that the sequence $(\Fcal_n)$ is finite. It suffices to prove
\begin{equation}\label{increasing max common factor in slope reduction}
\rk(\Ucal_n) < \rk(\Ucal_{n+1})
\end{equation}
since we have $\rk(\Ucal_n) \leq \rk(\Qcal)$ by \eqref{max common factor decomps for Fn and Q, slope reduction step}. To this end, we align the polygons $\HN(\Fcal_n)$ and $\HN(\Qcal)$ so that their left endpoints lie at the origin. The proof of Lemma \ref{existence of maximal common factor decomp} shows that $\Ucal_n$ represents the common part of $\HN(\Fcal_n)$ and $\HN(\Qcal)$. Moreover, since $\maxslopered{\Fcal}_n'$ is the maximal slope reduction of $\Fcal'_n$ to $\Qcal'_n$, the polygons $\HN(\maxslopered{\Fcal}_n')$ and $\HN(\Qcal_n')$ with their left endpoints aligned have some nontrivial common part which we represent by a nonzero vector bundle $\Tcal_n$. Let us now consider the decompositions 
\begin{equation}\label{decomps after slope reduction}
\Fcal_{n+1} = \Ucal_n \oplus \maxslopered{\Fcal}'_n \quad\quad \text{ and } \quad\quad \Qcal = \Ucal_n \oplus \Qcal'_n
\end{equation}
given by the definition of $\Fcal_{n+1}$ and \eqref{max common factor decomps for Fn and Q, slope reduction step}. The definition of $\Fcal_{n+1}'$ assumes that $\Qcal_n' \neq 0$, so Lemma \ref{existence of maximal common factor decomp} yields
\[\mumin(\Ucal_n) > \mumax(\Qcal_n') = \mumax(\maxslopered{\Fcal}_n') \quad\quad \text{ if } \Ucal_n \neq 0.\]
Hence the decompositions \eqref{decomps after slope reduction} imply that the common part of $\HN(\Fcal_{n+1})$ and $\HN(\Qcal)$ (with their left endpoints at the origin) is given by $\Ucal_n \oplus \Tcal_n$. Now we find $\Ucal_{n+1} \simeq \Ucal_n \oplus \Tcal_n$ by the proof of Lemma \ref{existence of maximal common factor decomp}, and consequently obtain the inequality \eqref{increasing max common factor in slope reduction} as $\Tcal$ is nonzero.

Let us now denote by $r$ the index of the final term in the sequence $(\Fcal_n)$. Since $\Qcal_r' = 0$ by \ref{slope reduction inductive state}, the decompositions in \eqref{max common factor decomps for Fn and Q, slope reduction step} yield
\[ \Fcal_r \simeq \Ucal_r \oplus \Fcal_r' \quad\quad \text{ and } \quad\quad \Qcal \simeq \Ucal_r.\]
Hence the property \ref{equal rank assumption for Fn and Q, slope reduction process} for $n = r$ implies 
\begin{equation}\label{slope reduction final state}
\Fcal_r \simeq \Qcal,
\end{equation} 
which in turn yields $c_{\Ecal, \Fcal_r}(\Qcal) = 0$ by Definition \ref{definition of cEF(Q)}. Now by the fact \ref{statement of decreasing cEF(Q) in slope reduction process} we find
\begin{equation}\label{chain inequalities for slope reduction process}
c_{\Ecal, \Fcal}(\Qcal) = c_{\Ecal, \Fcal_0}(\Qcal) \geq c_{\Ecal, \Fcal_1}(\Qcal) \geq \cdots \geq c_{\Ecal, \Fcal_r}(\Qcal) = 0,
\end{equation}
thereby establishing the desired inequality \eqref{deg inequality for surj}.

Our final task is to show that equality in \eqref{deg inequality for surj} holds if and only if $\Qcal = \Fcal$. Since equality in \eqref{deg inequality for surj} evidently holds if $\Qcal = \Fcal$, we only need to show that equality in \eqref{deg inequality for surj} implies $\Qcal = \Fcal$. The properties \ref{slope condition on E and F, slope reduction initial state} and \ref{no common slopes for E and F, slope reduction initial state} together imply that whenever $r \geq 1$ we have a strict inequality
\[c_{\Ecal, \Fcal_0}(\Qcal) > c_{\Ecal, \Fcal_1}(\Qcal)\]
by the fact \ref{equality condition for cEF(Q) in slope reduction process}. Hence we deduce from \eqref{chain inequalities for slope reduction process} that equality in \eqref{deg inequality for surj} holds only if $r = 0$, which implies $\Fcal = \Fcal_0 \simeq \Qcal$ by \eqref{slope reduction final state}. 
\end{proof}

We thus complete the proof of Proposition \ref{key inequality}, and therefore the proof of Theorem \ref{classification of quotient bundles}. 


\appendix

\section{Classification of quotient bundles on $\PP^1$}

\smallskip
\begin{center}by \textsc{Serin Hong and Hannah Larson}\end{center}

\subsection{Main statement}$ $

In this appendix, we establish an analogue of Theorem \ref{classification of quotient bundles} for vector bundles on the projective line $\PP^1$ over an arbitrary field $k$. For each integer $d$, we denote by $\trivbundle(d)$ the $d$-th Serre twist of the trivial line bundle on $\PP^1$. It is a classical theorem (often attributed to Grothendieck) that every vector bundle $\Vcal$ on $\PP^1$ admits a direct sum decomposition
\begin{equation}\label{P1 HN decomp} 
\Vcal \simeq \bigoplus_{i = 1}^r \trivbundle(d_i) \quad\quad \text{ with } d_i \in \Z.
\end{equation}
Now we can state the main statement of this appendix as follows:
\begin{theorem}\label{classification of quotient bundles on P1}
Let $\Ecal$ and $\Fcal$ be vector bundles on $\PP^1$ with direct sum decompositions
\begin{equation}\label{P1 HN decomps for E, F} 
\Ecal \simeq \bigoplus_{i = 1}^r \trivbundle(a_i) \quad\quad \text{ and } \quad\quad \Fcal \simeq \bigoplus_{j=1}^s \trivbundle(b_j)
\end{equation}
for some integers $a_1 \leq \cdots \leq a_r$ and $b_1 \leq \cdots \leq b_s$. 
Then $\Fcal$ arises as a quotient of $\Ecal$ if and only if for each $j = 1, \cdots, s$, we have either $b_j \geq a_{j+1}$ or $b_i = a_i$ for all $i = 1, \cdots, j$. 
\end{theorem}

This theorem is indeed an analogue of Theorem \ref{classification of quotient bundles} for vector bundles on $\PP^1$. For every vector bundle $\Vcal$ on $\PP^1$, we can use a direct sum decomposition as in \eqref{P1 HN decomp} to define its Harder-Narasimhan polygon $\HN(\Vcal)$ and  vector bundles $\Vcal^{\leq \mu}$ for every $\mu \in \Q$. Then we can state Theorem \ref{classification of quotient bundles on P1} in exact accordance with Theorem \ref{classification of quotient bundles}.

In the subsequent sections, we present two proofs of 
Theorem \ref{classification of quotient bundles on P1}. The first proof is based on some elementary linear algebra, and is largely inspired by the argument of Eisenbud-Harris \cite[Proposition 6.30]{EH_3264andallthat}. The second proof is based on dimension analysis on moduli spaces of bundle maps, and is essentially identical to our proof of Theorem \ref{classification of quotient bundles}.

\subsection{First proof: elementary linear algebra}$ $

For the rest of this appendix, we take $\Ecal$ and $\Fcal$ to be vector bundles on $\PP^1$ with direct sum decompositions as in \eqref{P1 HN decomps for E, F}. 

\begin{lemma}\label{matrix representation of quotient bundles on P1}
The existence of a surjective bundle map $\Ecal \surj \Fcal$ amounts to the existence of an $s \times r$ matrix $M$ over the polynomial ring $k[x, y]$ with the following properties:
\begin{enumerate}[label=(\roman*)]
\item\label{degree condition on entries of matrix representing a surj bundle map} The $(p, q)$-th entry of $M$ is either zero or homogeneous of degree $b_p - a_q$. 
\smallskip

\item\label{rank condition on matrix representing a surj bundle map} The $s \times s$ minors of $M$ have no common zeros. 
\end{enumerate}
\end{lemma}

\begin{proof}
For each integer $d \geq 0$, we can canonically identify $H^0(\PP^1, \trivbundle(d))$ as the space of degree $d$ homogeneous polynomials in $k[x, y]$. In addition, we have a natural identification $\text{Hom}_{\PP^1}(\trivbundle(a_q), \trivbundle(b_p)) \cong H^0(\PP^1, \trivbundle(b_p - a_q))$ for each $p = 1, \cdots, s$ and $q = 1, \cdots, r$. Therefore every bundle map $\Ecal \to \Fcal$ can be represented by an $s \times r$ matrix $M$ over $k[x, y]$ with the property \ref{degree condition on entries of matrix representing a surj bundle map}. Moreover, the map $\Ecal \to \Fcal$ is surjective if and only if $M$ has rank $s$ at all points on $\PP^1$, which amounts to having the property \ref{rank condition on matrix representing a surj bundle map}. 
\end{proof}

\begin{remark}
In the proof, when we identify $H^0(\PP^1, \trivbundle(d))$ as the space of degree $d$ homogeneous polynomials over $k$, we take the convention that the zero polynomial is homogeneous of all nonnegative degrees. 
\end{remark}

\begin{prop}\label{classification of quotient bundles on P1, necessity part by linear algebra}
If $\Fcal$ arises as a quotient of $\Ecal$, then for each $j = 1, \cdots, s$, we have either $b_j \geq a_{j+1}$ or $b_i = a_i$ for all $i = 1, \cdots, j$. 
\end{prop}

\begin{proof}
Suppose that we have $b_j < a_{j+1}$ for some $j = 1, \cdots, s$. 
We wish to show $b_i = a_i$ for $i = 1, \cdots, j$. Since $\Fcal$ arises as a quotient of $\Ecal$, we can take an $s \times r$ matrix $M$ over $k[x, y]$ as in Lemma \ref{matrix representation of quotient bundles on P1}. Then for each $p = 1, \cdots, j$ and $q = j+1, \cdots, r$, we find $b_p \leq b_j < a_{j+1} \leq a_q$ and consequently deduce that the $(p, q)$-th entry of $M$ is zero by the property \ref{degree condition on entries of matrix representing a surj bundle map} in Lemma \ref{matrix representation of quotient bundles on P1}. In other words, $M$ has a block decomposition
\[ M = \begin{pmatrix} A & 0 \\ B & C \end{pmatrix}\]
where $A$ is a $j \times j$ matrix over $k[x, y]$. Hence any nonzero $s \times s$ minor of $M$ should be divisible by the determinant of $A$. Now the property \ref{rank condition on matrix representing a surj bundle map} in Lemma \ref{matrix representation of quotient bundles on P1} implies that the determinant of $A$ should be a constant nonzero polynomial; otherwise it has a nontrivial zero at which all $s \times s$ minors of $M$ vanish. 

We then observe that for each $i = 1, \cdots, j$ we must have $b_i \geq a_i$; otherwise, we find $b_p \leq b_i < a_i \leq a_q$ for each $p = 1, \cdots, i$ and $q = i+1, \cdots, r$, and consequently deduce by the property \ref{degree condition on entries of matrix representing a surj bundle map} in Lemma \ref{matrix representation of quotient bundles on P1} that $A$ has a block decomposition 
\[ A = \begin{pmatrix} A_{11} & 0 \\ A_{21} &  A_{22} \end{pmatrix}\]
with $A_{11}$ being an $i \times (i-1)$ matrix and thus has a zero determinant. Similarly, for each $i = 1, \cdots, j$ we must have $b_i \leq a_i$; otherwise, we find $b_p \geq b_i > a_i \geq a_q$ for each $p = i, \cdots, s$ and $q = 1, \cdots, i$, and consequently deduce by the property \ref{degree condition on entries of matrix representing a surj bundle map} in Lemma \ref{matrix representation of quotient bundles on P1} that $A$ has a block decomposition 
\[ A = \begin{pmatrix} A_{11} & A_{12} \\ A_{21} &  A_{22} \end{pmatrix}\]
where $A_{21}$ is an $(s-i+1) \times i$ matrix with entries of positive degree, and thus has a determinant which is either zero or of positive degree. Therefore we conclude that $a_i$ and $b_i$ are equal for each $i = 1, \cdots, j$ as desired. 
\end{proof}

\begin{prop}\label{classification of quotient bundles on P1, sufficienty part by linear algebra}
Assume that for each $j = 1, \cdots, s$, we have either $b_j \geq a_{j+1}$ or $b_i = a_i$ for all $i = 1, \cdots, j$. Then $\Fcal$ arises as a quotient of $\Ecal$. 
\end{prop}

\begin{proof}
Let $l$ be the largest integer with the property $a_l = b_l$. Take $M$ to be the $s \times r$ matrix whose 
nonzero entries are given as follows:
\[ M = 
\left(\begin{array}{@{}c|c@{}}
\begin{matrix} 1 & & \\ & \ddots & \\ & & 1 \end{matrix}  & \\ \hline  & \begin{matrix} x^{b_{l+1} - a_{l+1}} & y^{b_{l+1} - a_{l+2}} &  &  & \\
 & \ddots & \ddots & & \\
 & & x^{b_s - a_s} & y^{b_s - a_{s+1}} & \quad\quad  \end{matrix} 
\end{array}\right)
\]
It suffices to show that $M$ satisfies the properties \ref{degree condition on entries of matrix representing a surj bundle map} and \ref{rank condition on matrix representing a surj bundle map} in Lemma \ref{matrix representation of quotient bundles on P1}. The property \ref{degree condition on entries of matrix representing a surj bundle map} is evident by construction. The property \ref{rank condition on matrix representing a surj bundle map} follows from the fact that the $s \times s$ minor with columns $1, \cdots, s$ is a power of $x$ while the $s \times s$ minor with columns $1, \cdots, l, l+2, \cdots, s+1$ is a power of $y$. 
\end{proof}

We now deduce Theorem \ref{classification of quotient bundles on P1} from Proposition \ref{classification of quotient bundles on P1, necessity part by linear algebra} and Proposition \ref{classification of quotient bundles on P1, sufficienty part by linear algebra}.

\subsection{Second proof: dimension analysis on moduli spaces of bundle maps}$ $

Let us denote by $\Sch_{/ k}$ the category of $k$-schemes. For every vector bundle $\Vcal$ on $\PP^1$, we will write $\rkdegsum(\Vcal):= \rk(\Vcal) + \deg(\Vcal)$. In addition, for arbitrary vector bundles $\Vcal$ and $\Wcal$ on $\PP^1$, we can define the moduli functors $\Hom_{\PP^1}(\Vcal, \Wcal)$, $\Surj_{\PP^1}(\Vcal, \Wcal)$, and $\Inj_{\PP^1}(\Vcal, \Wcal)$ on $\Sch_{/ k}$ as in Definition \ref{def of moduli functors of bundle maps}. Then we have the following analogue of Proposition \ref{moduli of bundle maps dimension formulas}:

\begin{prop}\label{dimension formula for bundle map spaces P1}
Let $\Vcal$ and $\Wcal$ be vector bundles on $\PP^1$. 
\begin{enumerate}[label=(\arabic*)]
\item $\Hom_{\PP^1}(\Vcal, \Wcal)$ is represented by the affine scheme $\mathbb{A}_k^{\rkdegsum(\Vcal^\vee \otimes \Wcal)^\nonneg}$. 
\smallskip

\item $\Surj_{\PP^1}(\Vcal, \Wcal)$ and $\Inj_{\PP^1}(\Vcal, \Wcal)$ are represented by an open subscheme of $\Hom_{\PP^1}(\Vcal, \Wcal)$, and thus are either empty or of dimension $\rkdegsum(\Vcal^\vee \otimes \Wcal)^\nonneg$. 
\end{enumerate}
\end{prop}

\begin{proof}
The functor $\Hom_{\PP^1}(\Vcal, \Wcal)$ is indeed represented by $\Spec( \Sym_k H^0(\PP^1, \Vcal^\vee \otimes \Wcal)^\vee)$, whose Krull dimension is given by $\dim_k H^0(\PP^1, \Vcal^\vee \otimes \Wcal) = \rkdegsum(\Vcal^\vee \otimes \Wcal)^\nonneg$. Now we can argue exactly as in \cite[\S3.3]{Arizona_extvb} to establish the second statement. 
\end{proof}

Moreover, we have an analogue of Proposition \ref{dimension inequality for surj maps} as follows:

\begin{prop}\label{dimension inequality for surj maps over P1}
$\Fcal$ arises as a quotient of $\Ecal$ if the following conditions are satisfied:
\begin{enumerate}[label=(\roman*)]
\item\label{existence of nonzero bundle map from E to F over P1} There exists a nonzero bundle map $\Ecal \to \Fcal$. 

\item\label{positive codim for Hom minus surj for P1} For any $\Qcal \subsetneq \Fcal$ which also occurs as a quotient of $\Ecal$ we have an inequality
\[ \rkdegsum(\Ecal^\vee \otimes \Qcal)^\nonneg + \rkdegsum(\Qcal^\vee \otimes \Fcal)^\nonneg < \rkdegsum(\Ecal^\vee \otimes \Fcal)^\nonneg + \rkdegsum(\Qcal^\vee \otimes \Qcal)^\nonneg.\]
\end{enumerate}
\end{prop}

\begin{proof}
Let $S$ be the set of isomorphism classes of subsheaves $\Qcal \subseteq \Fcal$ which also occur as a quotient of $\Ecal$. 
We assert that $S$ is finite. 
The Harder-Narasimhan theory for vector bundles on $\PP^1$ is almost identical to the Harder-Narasimhan theory for vector bundles on $\adicff$, as we only need an additional requirement that all Harder-Narasimhan slopes are integers. 
In particular, for vector bundles on $\PP^1$ we can define the notion of slopewise dominance and
obtain analogues of Propositions \ref{quotient bundles necessary condition} and \ref{subbundles necessary condition}.
It follows that the slopes of every $\Qcal \in S$ are bounded by $\mumin(\Ecal)$ and $\mumax(\Fcal)$. Since vector bundles on $\PP^1$ have integer slopes in their HN polygons, we deduce that $S$ is finite. 

Let us now assume for contradiction that $\Fcal$ does not arise as a quotient of $\Ecal$. Then $S$ does not contain the isomorphism class of $\Fcal$. 
For each $\Qcal \in S$, we define $\Hom_{\PP^1}(\Ecal, \Fcal)_{\Qcal}$ to be the image of the natural map
\[\Surj_{\PP^1}(\Ecal,\Qcal) \times_{\Spec(k)} \Inj_{\PP^1}(\Qcal,\Fcal) \to \Hom_{\PP^1}(\Ecal,\Fcal)\]
induced by composition of bundle maps, and write $\overline{\Hom_{\PP^1}(\Ecal, \Fcal)_{\Qcal}}$ for its closure in $\Hom_{\PP^1}(\Ecal,\Fcal)$. Since $\Hom_{\PP^1}(\Ecal,\Fcal)$ is represented by $\mathbb{A}_k^{\rkdegsum(\Vcal^\vee \otimes \Wcal)^\nonneg}$ as noted in Proposition \ref{dimension formula for bundle map spaces P1}, for each locally closed subscheme $\dim \Hom_{\PP^1}(\Ecal, \Fcal)_{\Qcal}$ with $\Qcal \in S$ we find
\[\dim \Hom_{\PP^1}(\Ecal, \Fcal)_{\Qcal}  = \dim \overline{\Hom_{\PP^1}(\Ecal, \Fcal)_{\Qcal}}.\]
By construction, $\Hom_{\PP^1}(\Ecal,\Fcal)$ is covered by the subschemes $\overline{\Hom_{\PP^1}(\Ecal, \Fcal)_{\Qcal}}$ with $\Qcal \in S$. As the set $S$ is finite, we find
\begin{equation}\label{dimension Hom(E, F) as max dimension of strata}
\dim \Hom_{\PP^1}(\Ecal, \Fcal) = \sup_{\Qcal \in S} \dim \overline{\Hom_{\PP^1}(\Ecal, \Fcal)_\Qcal} = \sup_{\Qcal \in S} \dim \Hom_{\PP^1}(\Ecal, \Fcal)_\Qcal.
\end{equation}
In addition, we can argue exactly as in \cite[Lemma 3.3.10]{Arizona_extvb} to show that $\Hom_{\PP^1}(\Ecal, \Fcal)_{\Qcal}$ for each $\Qcal \in S$ is either empty or satisfies
\[\dim \Hom_{\PP^1}(\Ecal, \Fcal)_{\Qcal}  = \rkdegsum(\Ecal^\vee \otimes \Qcal)^\nonneg + \rkdegsum(\Qcal^\vee \otimes \Fcal)^\nonneg - \rkdegsum(\Qcal^\vee \otimes \Qcal)^\nonneg.\]
Now the assumption \ref{positive codim for Hom minus surj for P1} and Proposition \ref{dimension formula for bundle map spaces P1} together imply
\[\dim \Hom_{\PP^1}(\Ecal, \Fcal)_{\Qcal} < \dim \Hom_{\PP^1}(\Ecal,\Fcal) \quad \text{ for every } \Qcal \in S,\]
thereby yielding a contradiction by \eqref{dimension Hom(E, F) as max dimension of strata} as desired. 
\end{proof}

\begin{remark}
The proof of Proposition \ref{dimension inequality for surj maps over P1} is slightly different from the original proof of \cite[Theorem 3.3.11]{Arizona_extvb}. Here we established \eqref{dimension Hom(E, F) as max dimension of strata} by using the fact that $\Hom_{\PP^1}(\Ecal,\Fcal)$ is represented by an affine algebraic space. For \cite[Theorem 3.3.11]{Arizona_extvb}, we get an analogous identity from the fact that the topological space $|\Hom(\Ecal, \Fcal)_\Qcal|$ is stable under generalization and specialization inside $|\Hom(\Ecal, \Fcal)|$. 
\end{remark}

Let us also record some basic properties of the function $\rkdegsum$. 

\begin{lemma}\label{algebraic properties of hom dimension}
Let $\Vcal$ and $\Wcal$ be vector bundles on $\PP^1$. 
\begin{enumerate}[label=(\arabic*)]
\item\label{additivity of hom dimension} We have $\rkdegsum(\Vcal \oplus \Wcal) = \rkdegsum(\Vcal) + \rkdegsum(\Wcal)$. 
\smallskip

\item\label{hom dimension zero condition} We have $\rkdegsum(\Vcal^\vee \otimes \Wcal)^\nonneg = 0$ if and only if $\mumin(\Vcal)$ is greater than $\mumax(\Wcal)$. 
\smallskip

\item\label{twist invariance of hom dimension} For any $d \in \Z$, we have $\rkdegsum(\Vcal(d)^\vee \otimes \Wcal(d))^\nonneg = \rkdegsum(\Vcal^\vee \otimes \Wcal)^\nonneg$.
\smallskip

\item\label{slopewise dominance and hom dimension} If $\Vcal$ slopewise dominates $\Wcal$, then we have $\rkdegsum(\Vcal)^\nonneg \geq \rkdegsum(\Wcal)^\nonneg$. 
\smallskip

\item\label{hom dimension inequality for max slope reduction} For any $d \in \Z$, we have $\rkdegsum(\Vcal^\vee \otimes \Wcal^{>d})^\nonneg \geq \rkdegsum(\Vcal^\vee \otimes \trivbundle(d)^{\rk(\Wcal^{>d})})^\nonneg$. 
\end{enumerate}
\end{lemma}

\begin{proof}
The statement \ref{additivity of hom dimension} is evident by the additivity of rank and degree for vector bundles. The statements \ref{hom dimension zero condition}, \ref{twist invariance of hom dimension} and \ref{slopewise dominance and hom dimension} are straightforward to check by arguing as in their analogues, namely Corollary \ref{zero degree for completely dominating slopes}, Lemma \ref{degree after shear} and Lemma \ref{nonnegative degree for slopewise dominant pairs}. The statement \ref{hom dimension inequality for max slope reduction} is an analogue of the inequality \eqref{max slope reduction difference of e,f terms lower bound}, and can be verified by arguing as in the proof of Proposition \ref{decreasing c_EF(Q) after max reduction}; in fact, we immediately find 
\[\deg(\Vcal^\vee \otimes \Wcal^{>d})^\nonneg \geq \deg(\Vcal^\vee \otimes \trivbundle(d)^{\rk(\Wcal^{>d})})^\nonneg\] 
as the inequality \eqref{max slope reduction difference of e,f terms lower bound} holds verbatim for vector bundles on $\PP^1$, and also find 
\[\rk(\Vcal^\vee \otimes \Wcal^{>d})^\nonneg \geq \rk(\Vcal^\vee \otimes \trivbundle(d)^{\rk(\Wcal^{>d})})^\nonneg\] 
by a similar argument that considers the $x$-coordinates of HN vectors. 
\end{proof}

\begin{remark}
The function $\rkdegsum$ does not admit an analogue of Lemma \ref{degree after stretch}. However, this won't be a problem for us. Indeed, we used Lemma \ref{degree after stretch} only once in the proof of Theorem \ref{classification of quotient bundles} for reduction to the case of integer slopes. For Theorem \ref{classification of quotient bundles on P1}, we don't need such a reduction step as vector bundles on $\PP^1$ only have integer slopes. 
\end{remark}

Now we present another proof of Theorem \ref{classification of quotient bundles on P1}. 
\begin{prop}\label{classification of quotient bundles on P1 from key inequality}
Theorem \ref{classification of quotient bundles on P1} follows from Proposition \ref{dimension inequality for surj maps over P1} and Lemma \ref{algebraic properties of hom dimension}. 
\end{prop}

\begin{proof}
We can reformulate Theorem \ref{classification of quotient bundles on P1} as an analogue of Theorem \ref{classification of quotient bundles} using vector bundles $\Ecal^{\leq \mu}$ and $\Fcal^{\leq \mu}$ for $\mu \in \Q$. Then the necessity part of Theorem \ref{classification of quotient bundles on P1} becomes an analogue of Proposition \ref{quotient bundles necessary condition}, which is a formal consequence of the
Harder-Narasimhan theory for vector bundles on $\PP^1$ 
as noted in the proof of Proposition \ref{dimension inequality for surj maps over P1}. 
It remains to show that the sufficiency part of Theorem \ref{classification of quotient bundles on P1} follows from Proposition \ref{dimension inequality for surj maps over P1} and Lemma \ref{algebraic properties of hom dimension}. Assume that for each $j = 1, \cdots, s$, we have either $b_j \geq a_{j+1}$ or $b_i = a_i$ for all $i = 1, \cdots, j$. As noted already, our assumption precisely means that we have $\rk(\Ecal^{\leq \mu}) \geq \rk(\Fcal^{\leq \mu})$ for each $\mu \in \Q$ with equality if and only if $\Ecal^{\leq \mu}$ and $\Fcal^{\leq \mu}$ are isomorphic. We wish to show that $\Ecal$ and $\Fcal$ satisfy the conditions of Proposition \ref{dimension inequality for surj maps over P1}. This is essentially an analogue of Proposition \ref{key inequality}, after arguing as in Lemma \ref{reduction on common slopes} to add an assumption that $\Ecal$ and $\Fcal$ have no common slopes. Our proof of Proposition \ref{key inequality} relies only on the Harder-Narasimhan theory for vector bundles on $\adicff$ and some basic properties of the degree function such as Corollary \ref{zero degree for completely dominating slopes}, Lemma \ref{degree after shear}, Lemma \ref{nonnegative degree for slopewise dominant pairs} and some inequalities in the proof of Proposition \ref{decreasing c_EF(Q) after max reduction}. In the context of Proposition \ref{dimension inequality for surj maps over P1}, the function $\rkdegsum$ takes the role of the degree function in Proposition \ref{key inequality} and has analogous properties as summarized in Lemma \ref{algebraic properties of hom dimension}. Therefore we can verify the conditions of Proposition \ref{dimension inequality for surj maps over P1} for $\Ecal$ and $\Fcal$ by only using the Harder-Narasimhan theory for vector bundles on $\PP^1$ and Lemma \ref{algebraic properties of hom dimension}. 
\end{proof}

\begin{remark}
Our argument in this subsection suggests that Theorem \ref{classification of quotient bundles} (and Theorem \ref{classification of quotient bundles on P1}) should extend to any Harder-Narasimhan categories where certain moduli spaces of morphisms exist with locally spectral topological spaces that admit a nice dimension formula as in Proposition \ref{moduli of bundle maps dimension formulas} or Proposition \ref{dimension formula for bundle map spaces P1} with some nice algebraic properties as in Lemma \ref{algebraic properties of hom dimension}. 
\end{remark}

\bibliographystyle{amsalpha}

\bibliography{Bibliography}
	
\end{document}